\numberwithin{equation}{section}
\newtheorem{thm}{Theorem}[section]
\newtheorem{thmi}{Theorem}
\newtheorem{prop}[thm]{Proposition}
\newtheorem{lemma}[thm]{Lemma}
\newtheorem{cor}[thm]{Corollary}
\newtheorem*{thm*}{Theorem}
\newtheorem*{prop*}{Proposition}
\newtheorem*{cor*}{Corollary}
\newtheorem*{conj*}{Conjecture}
\theoremstyle{definition}
\newtheorem{definition}[thm]{Definition}
\theoremstyle{remark}
\newtheorem{rmk}[thm]{Remark}
\newlength{\enummargin}
\newcommand{\mc}{\mathcal}
\newcommand{\cC}{\mc C}
\newcommand{\cD}{\mc D}
\newcommand{\cE}{\mc E}
\newcommand{\cH}{\mc H}
\newcommand{\cK}{\mc K}
\newcommand{\cO}{\mc O}
\newcommand{\cP}{\mc P}
\newcommand{\cQ}{\mc Q}
\newcommand{\cR}{\mc R}
\newcommand{\cX}{\mc X}
\newcommand{\cY}{\mc Y}
\newcommand{\cZ}{\mc Z}
\newcommand{\ms}{\mathscr}
\newcommand{\sC}{\ms C}
\newcommand{\C}{\mathbb{C}}
\newcommand{\N}{\mathbb{N}}
\newcommand{\R}{\mathbb{R}}
\newcommand{\Sph}{\mathbb{S}}
\newcommand{\frakt}{\mathfrak{t}}
\newcommand{\ran}{\operatorname{ran}}
\newcommand{\im}{\operatorname{Im}}
\renewcommand{\Im}{\operatorname{Im}}
\renewcommand{\Re}{\operatorname{Re}}
\newcommand{\supp}{\operatorname{supp}}
\newcommand{\dR}{\mathrm{dR}}
\newcommand{\la}{\langle}
\newcommand{\ra}{\rangle}
\newcommand{\pa}{\partial}
\newcommand{\tn}{\textnormal}
\newcommand{\eps}{\epsilon}
\newcommand{\ep}{\epsilon}
\newcommand{\xra}{\xrightarrow}
\newcommand{\bl}{{\mathrm{b}}}
\newcommand{\cl}{{\mathrm{c}}}
\newcommand{\semi}{\hbar}
\newcommand{\Diff}{\mathrm{Diff}}
\newcommand{\Diffb}{\Diff_\bl}
\newcommand{\even}{\mathrm{even}}
\newcommand{\CI}{\cC^\infty}
\newcommand{\CIdot}{\dot\cC^\infty}
\newcommand{\CIc}{\cC^\infty_\cl}
\newcommand{\CmIdot}{\dot\cC^{-\infty}}
\newcommand{\Hext}{{\bar H}{}}
\newcommand{\ol}{\overline}
\newcommand{\oX}{\overline{X}}
\newcommand{\Xeven}{\overline{X}_{\mathrm{even}}}
\newcommand{\numin}{\nu_{\mathrm{min}}}
\newcommand{\wh}{\widehat}
\newcommand{\wt}{\widetilde}
\newcommand{\hd}{\wh{d}}
\newcommand{\hdel}{\wh{\delta}}
\newcommand{\td}{\wt{d}}
\newcommand{\tdel}{\wt{\delta}}
\begin{document}
\title[Asymptotics for differential forms]{Asymptotics for the wave equation on differential forms on Kerr--de Sitter space}

\author{Peter Hintz}
\address{Department of Mathematics, Stanford University, CA 94305-2125, USA}
\curraddr{Department of Mathematics, University of California, Berkeley, CA, 94720-3840, USA}
\email{phintz@berkeley.edu}

\author{Andr\'as Vasy}
\address{Department of Mathematics, Stanford University, CA 94305-2125, USA}
\email{andras@math.stanford.edu}
\date{March 6, 2015. Final revision: January 18, 2018.}
\subjclass[2010]{Primary 35P25; Secondary 35L05, 35Q61, 83C57}
\thanks{The authors were supported in part by A.V.'s National Science
  Foundation grants DMS-1068742 and DMS-1361432 and P.H.\ was
  supported in part by a Gerhard Casper Stanford Graduate Fellowship. We are grateful to four anonymous referees for carefully reading the manuscript and for providing comments and suggestions which significantly improved the exposition.}

\begin{abstract}
  We study asymptotics for solutions of Maxwell's equations, in fact of the Hodge--de Rham equation $(d+\delta)u=0$ without restriction on the form degree, on a geometric class of stationary spacetimes with a warped product type structure (without any symmetry assumptions), which in particular include Schwarz\-schild-de Sitter spaces of all spacetime dimensions $n\geq 4$. We prove that solutions decay exponentially to $0$ or to stationary states in every form degree, and give an interpretation of the stationary states in terms of cohomological information of the spacetime. We also study the wave equation on differential forms and in particular prove analogous results on Schwarzschild--de Sitter spacetimes. We demonstrate the stability of our analysis and deduce asymptotics and decay for solutions of Maxwell's equations, the Hodge--de Rham equation and the wave equation on differential forms on Kerr--de Sitter spacetimes with small angular momentum.
\end{abstract}

\maketitle

\section{Introduction}
\label{SecIntro}

Maxwell's equations describe the dynamics of the electromagnetic field on a $4$-dimensional spacetime $(M,g)$. Writing them in the form $(d+\delta_g)F=0$, where $\delta_g$ is the codifferential, for the electromagnetic field $F$ (a $2$-form) suggests studying the operator $d+\delta_g$, whose square
\[
  \Box_g=(d+\delta_g)^2
\]
is the Hodge d'Alembertian, i.e.\ the wave operator on differential forms. It is then very natural to study solutions of $(d+\delta_g)u=0$ or $\Box_g u=0$ without restrictions on the form degree or the dimension of the spacetime. Here, we study quasinormal modes (or resonances) of $\Box_g$ (or $d+\delta_g$) when $M=\R_t\times X$, $X=\ol X^\circ$ with $\ol X$ compact, is equipped with a stationary Lorentzian metric $g$ which has a suitable warped product structure near $\pa\ol X$: resonances are complex numbers $\sigma\in\C$ for which there exists a smooth $t$-independent differential form $u(x)$ on $M$ satisfying outgoing boundary conditions at $\pa\ol X$, such that $\Box_g(e^{-i t\sigma}u)=0$ (or $(d+\delta_g)(e^{-i t\sigma}u)=0$): roughly, we show that all such resonances satisfy $\Im\sigma<0$, with the exception of a possible resonance at $\sigma=0$, corresponding to stationary solutions of the wave or Hodge--de Rham equation, for which we give a (rather subtle) description in terms of the cohomology of $M$, thus significantly refining the geometric understanding of asymptotics for waves on single black hole spacetimes studied in the literature to date (see~Section~\ref{SubsecPrevious} for references).

Important examples of spacetimes that fit into the class of spacetimes studied in the present paper are Schwarzschild--de Sitter spacetimes with spacetime dimension $\geq 4$; it is important to note that our results are much more general, allowing for an arbitrary topology of $\ol X$ (thus allowing e.g.\ for multiple black holes). For Schwarzschild--de Sitter spaces, or indeed perturbations of these, in particular on Kerr--de Sitter spaces with small angular momenta, we can use our results on the location and structure of resonances and prove a partial resonance expansion of waves into their stationary part plus an exponentially decaying remainder:

\begin{thmi}
\label{ThmIntroKDS}
  Let $(M,g_a)$ denote a neighborhood of the domain of outer communications of a non-degenerate Kerr--de Sitter space with black hole mass $M_\bullet>0$, cosmological constant $\Lambda>0$ and angular momentum $a$ which we assume to be very small, $|a|\ll M_\bullet$;\footnote{The non-degeneracy requirement ensures that the cosmological horizon lies outside the event horizon. For example, if $M_\bullet$ is fixed and $\Lambda>0$ is small, sufficiently small values of $a$ work.} Denote by $t_*$ a smooth time function which is equal to the Boyer--Lindquist coordinate $t$ away from the horizons, and a suitable (Kerr-star coordinate type) modification of $t$ near the horizon.\footnote{See~\eqref{EqTimeStar} for the definition in the warped product setting which applies to Schwarzschild--de~Sitter spacetimes (i.e.\ $a=0$), and Section~\ref{SecKdS} for references for the Kerr--de~Sitter case.} Suppose $u\in\CI(M;\Lambda M)$ is a solution of the equation
  \[
    (d+\delta_{g_a})u=0,
  \]
  with smooth initial data, and denote by $u_j$ the form degree $j$ part of $u$, $j=0,\ldots,4$. Then $u_2$ decays exponentially in $t_*$ to a stationary state, which is a linear combination of the $t_*$-independent $2$-forms $u_{a,1},u_{a,2}$. In the standard (Boyer--Lindquist) local coordinate system on Kerr--de Sitter space, $u_{a,1}$ and $u_{a,2}$ have explicit closed form expressions; in particular, on Schwarzschild--de Sitter space, $u_{0,1}=r^{-2}\,dt\wedge dr$, and $u_{0,2}=\omega$ is the volume element of the round unit $2$-sphere. Moreover, $u_1$ and $u_3$ decay exponentially to $0$, while $u_0$ decays exponentially to a constant, and $u_4$ to a constant multiple of the volume form.

  Suppose now $u\in\CI(M;\Lambda M)$ instead solves the wave equation
  \[
    \Box_{g_a}u=0
  \]
  with smooth initial data, then the same decay as before holds for $u_0,u_2$ and $u_4$, while $u_1$ decays exponentially to a member of a $2$-dimensional family of stationary states, likewise for $u_3$.
\end{thmi}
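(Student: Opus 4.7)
The plan is to reduce the asymptotic statement to a question about resonances: exponential decay to a stationary state is equivalent to showing that, in the closed upper half plane $\Im\sigma\geq 0$, the only resonance of $\Box_{g_a}$ (resp.\ $d+\delta_{g_a}$) acting on forms lies at $\sigma=0$, together with a semisimplicity statement and an explicit description of $\ker\widehat{(d+\delta)}_{g_a}(0)$ and $\ker\widehat{\Box}_{g_a}(0)$. I would proceed in two stages: first establish the full resonance picture on Schwarzschild--de Sitter ($a=0$), which has enough symmetry to permit a direct cohomological computation of the zero resonance; then perturb in $a$ using the robustness of the underlying microlocal Fredholm theory.

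Stage one starts from a b-compactification of the exterior region (together with a neighborhood of the horizons and of future infinity) on which $t_*\to\infty$ becomes a boundary face. Taking the Mellin transform in $t_*$ produces a holomorphic family of stationary operators $\widehat{\Box}_{g_0}(\sigma),\widehat{(d+\delta)}_{g_0}(\sigma)$, extended across the horizons so as to become real-principal-type operators with radial points at the horizon conormals. Combining radial point estimates at the two horizons with the normally hyperbolic trapping estimate at the photon sphere, both of which carry over from scalars to forms with only bundle-valued modifications, yields a meromorphic Fredholm inverse on suitable variable-order b-Sobolev spaces in a strip $\Im\sigma>-\alpha$. To locate the poles one argues by a positive-commutator/boundary-pairing argument that no resonances occur in $\Im\sigma\geq 0\setminus\{0\}$, forcing any putative resonant state to vanish at the horizons and hence everywhere. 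At $\sigma=0$ the kernel is then identified with the de Rham/Hodge cohomology of the spatial slice (a warped product $[r_-,r_+]\times\Sph^{n-2}$ with appropriate boundary behavior), whose generators in degree $2$ are the explicit closed, coclosed forms $u_{0,1}=r^{-2}\,dt\wedge dr$ and $u_{0,2}=\omega$, and in degrees $0$ and $n-1$ are the constants and the volume form; the wave equation case picks up additional resonant states in degrees $1$ and $n-2$ because $\widehat{\Box}_{g_0}(0)=\widehat{(d+\delta)}_{g_0}(0)^2$ has a strictly larger kernel, to be read off from the Hodge decomposition on the spherical factor together with the radial direction.

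Stage two is perturbation in $a$. The radial point and trapping estimates depend continuously on the geometry, and the Kerr--de Sitter family agrees with Schwarzschild--de Sitter to first order at $a=0$ outside any neighborhood of the ergoregion away from the horizons; hence the Fredholm estimates and high-energy bounds persist for $|a|$ small, uniformly on compact subsets in $\sigma$. Consequently $\widehat{\Box}_{g_a}(\sigma)^{-1}$ remains meromorphic in a strip, and its poles depend continuously on $a$; combined with the uniform high-energy estimate, this forbids resonances from crossing into $\Im\sigma>0$ for small $a$. The main obstacle is the analysis at $\sigma=0$: one has to show that the zero resonance neither moves into the upper half plane nor develops Jordan blocks under the perturbation. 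I would handle this by first proving semisimplicity at $a=0$ through the explicit cohomological pairing between resonant states and dual (coresonant) states, and then applying analytic Fredholm theory to produce the perturbed resonant forms $u_{a,1},u_{a,2}$ as analytic-in-$a$ sections of $\ker\widehat{(d+\delta)}_{g_a}(0)$; verifying that they coincide with the explicit $t_*$-independent Boyer--Lindquist forms claimed in the statement is then a direct, essentially algebraic, computation, facilitated by the fact that the relevant cohomology of the spatial slice is unchanged by the small perturbation.
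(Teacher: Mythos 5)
Your overall strategy matches the paper's: Mellin transform, Fredholm analysis in the sense of \cite{VasyMicroKerrdS}, exclusion of nonzero resonances in $\Im\sigma\geq 0$ by boundary-pairing arguments, cohomological identification of the zero resonant states on Schwarzschild-de Sitter, and perturbation in $a$. However, there are two genuine gaps. First, the assertion that the normally hyperbolic trapping estimate ``carries over from scalars to forms with only bundle-valued modifications'' is false as stated: to get a resonance-free strip below the real axis one needs the imaginary part of the subprincipal symbol of $\Box$ at the trapped set, measured against a \emph{positive definite} fiber inner product, to be smaller than $\numin/2$. The paper computes that for the natural choice $B=H\oplus H$ the relevant eigenvalues are $\pm 2r_p$, and the required bound fails in dimension $4$ whenever $9M_\bullet\Lambda\leq 1/4$ --- i.e.\ for most physical parameters. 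No pointwise inner product works there; one must pass to a pseudodifferential inner product (equivalently, conjugate $\Box$ by a suitable elliptic \psdo) as in \cite{HintzPsdoInner}. Your proof as written would only cover very massive black holes.

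Second, your treatment of the zero resonance under perturbation does not close. Continuity of the poles plus semisimplicity at $a=0$ and analytic Fredholm theory only tell you that the rank-$4$ resonance at $\sigma=0$ perturbs to a cluster of resonances of total rank $4$ \emph{near} $0$; nothing in your argument prevents this cluster from splitting and drifting off the origin, possibly into $\Im\sigma>0$, which would destroy the theorem. Invoking ``analytic-in-$a$ sections of $\ker\widehat{(d+\delta)}_{g_a}(0)$'' presupposes that this kernel stays $4$-dimensional, which is exactly what must be proved. The missing idea is to exhibit, for every small $a$, an explicit $4$-dimensional space of states pinned at exactly $\sigma=0$, saturating the perturbation-theoretic upper bound: the paper does this with \emph{dual} resonant states, namely the two $\delta$-distributions at the horizons obtained by splitting $(\td(0)+\wt{\delta_{g_a}}(0))1_X$ into its parts supported at $r=r_\pm$, together with their Hodge duals coming from $\star_{g_a}1_X$. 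Since these are at least $4$ independent dual states at $\sigma=0$ and the total dimension over all resonances near $0$ is at most $4$, every resonance in $\Im\sigma\geq 0$ must sit at $0$ and be simple. A parallel dimension count using $\pi_k\cH_a\subseteq\cK_a^k$ and the inclusions $\cH_{a,*}^k\subseteq\cK_{a,*}^k$ then handles $\Box_{g_a}$ degree by degree. Without this (or an equivalent) mechanism your Stage two does not establish the claimed asymptotics.
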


The Schwarzschild--de Sitter case of this theorem, i.e.\ the special case $a=0$, will be proved in Section~\ref{SubsecSDS}, and we give explicit expressions for all stationary states, see Theorems~\ref{ThmSDSDDel} and \ref{ThmSDSBox}. Section~\ref{SecKdS} provides the perturbation arguments, see in particular Theorem~\ref{ThmKDS}; we point out that while Schwarzschild--de~Sitter spacetimes fit directly into our framework, Kerr--de~Sitter spacetimes do not, as they do not have the requisite warped product structure described in Section~\ref{SubsecOutline} below, hence we can only treat them perturbatively here. For the explicit form of $u_{a,1}$ and $u_{a,2}$, see Remark~\ref{RmkKDSExplicit}. Note that asymptotics and exponential decay of differential form solutions to the wave equation are stronger statements than corresponding statements for Maxwell's equations or for the Hodge--de Rham equation, as any solution of one of the latter equations is automatically a solution of the former; improved results for the Maxwell or Hodge--de Rham equation can then be obtained in a second step.

Our arguments strongly use that we are dealing with the Hodge--d'Alembertian $\Box_g=(d+\delta_g)^2$ rather than related operators which differ from $\Box_g$ by lower order terms, e.g.\ the rough wave operator $-\mathrm{tr}\,\nabla^2$, or the Klein--Gordon type operator $\Box_g-m^2$, $m\in\R$. Indeed, the factorization of the Hodge--d'Alembertian is essential for us. Furthermore, as we rely heavily on integration by parts and symmetry considerations which exploit properties of the form bundle, we do not treat more general tensor bundles here.

We stress that the main feature of the spacetimes $(M,g)$ considered in this paper is a warped product type structure of the metric; \emph{we do not make any symmetry assumptions on $M$}. From a geometric point of view then, the main novelty of this paper is a general cohomological interpretation of stationary states, which in the above theorem are merely explicitly given. On a technical level, we show how to analyze quasinormal modes (also called resonances, further discussed below) for equations on vector bundles whose natural inner product is not positive definite. To stress the generality of the method, we point out that symmetries only become relevant in explicit calculations for specific examples such as Schwarzschild--de Sitter and Kerr--de Sitter spaces. Even then, the perturbation analysis around Schwarzschild--de Sitter space works without restrictions on the perturbation; only for the explicit form of the space $\la u_{a,1},u_{a,2}\ra$ of stationary states do we need the very specific form of the Kerr--de Sitter metric. Thus, combining the perturbation analysis with the non-linear framework developed by the authors in \cite{HintzVasyQuasilinearKdS}, we can immediately solve suitable \emph{quasilinear wave equations on differential forms} on Kerr--de Sitter spacetimes; see Remark~\ref{RmkQuasilinear}. To put this into context, part of the motivation for the present paper is the black hole stability problem, see the lecture notes by Dafermos and Rodnianski \cite{DafermosRodnianskiLectureNotes} for background on this, and we expect that the approach taken here will facilitate the linear part of the stability analysis, which, when accomplished, rather directly gives the non-linear result when combined with the non-linear analysis in \cite{HintzVasyQuasilinearKdS}.

\subsection{Outline of the general result}
\label{SubsecOutline}

Going back to the linear problem studied in this paper, we proceed to explain the general setup in more detail. Let $\oX$ be a connected, compact, orientable $(n-1)$-dimensional manifold with non-empty boundary $Y=\pa\oX\neq\emptyset$ and interior $X=\oX^\circ$, and let
\[
  M=\R_t\times X,
\]
which is thus $n$-dimensional. Denote the connected components of $Y$, which are of dimension $(n-2)$, by $Y_i$, for $i$ in a finite index set $I$. We assume that $M$ is equipped with the metric
\begin{equation}
\label{EqMetricSpacetime}
  g=\alpha(x)^2\,dt^2-h(x,dx),
\end{equation}
where $h$ is a smooth Riemannian metric on $\oX$ (in particular, incomplete) and $\alpha$ is a boundary defining function of $X$, i.e.\ $\alpha\in\CI(\oX)$, $\alpha=0$ on $Y$, $\alpha>0$ in $X$ and $d\alpha|_Y\neq 0$. (As we demonstrate in equations~\eqref{EqdSMetric} and \eqref{EqSdSMetric}, (Schwarzschild--)de~Sitter space indeed has this form.) We moreover assume that every connected component $Y_i$ of $Y$, $i\in I$, has a collar neighborhood $[0,\eps_i)_\alpha\times(Y_i)_y$ in which $h$ takes the form
\begin{equation}
\label{EqMetricSpacetime2}
  h=\wt\beta_i(\alpha^2,y)\,d\alpha^2 + k_i(\alpha^2,y,dy)
\end{equation}
with $\wt\beta_i(0,y)\equiv\beta_i>0$ constant along $Y_i$.\footnote{The constancy is required for the Fredholm analysis in Section~\ref{SecPrelim}, and is satisfied for all examples considered in this paper; in the case of Schwarzschild--de~Sitter spacetimes, it amounts to the constancy of the surface gravities of the event and the cosmological horizon.} In particular, $\alpha^{-2}h$ is an \emph{even} asymptotically hyperbolic metric in the sense of Guillarmou \cite{GuillarmouMeromorphic}; for the connection between horizons and asymptotically hyperbolic spaces, we also refer to~\cite{BachelotMotetBachelotSchwarzschild,SaBarretoZworskiResonances} and \cite[Chapter~4]{ChandrasekharBlackHoles}.\footnote{Thus, as we will show, de Sitter and Schwarzschild--de Sitter spaces fit into this framework, whereas asymptotically flat spacetimes like Schwarzschild (or Kerr) do not.} We change the smooth structure on $\oX$ to only include even functions of $\alpha$, and show how one can then extend the metric $g$ to a stationary metric (denoted $\wt g$, but dropped from the notation in the sequel) on a bigger spacetime $\wt M=\R_{t_*}\times\wt X$, where $\ol X\hookrightarrow\wt X^\circ$, and where $t_*$ is a shifted time coordinate. Since the operator $d+\delta$ commutes with time translations, it is natural to consider the normal operator family
\[
  \td(\sigma)+\tdel(\sigma) = e^{it_*\sigma}(d+\delta)e^{-it_*\sigma}
\]
acting on differential forms (valued in the form bundle of $M$) on a slice of constant $t_*$, identified with $\wt X$; that is, every $\pa_{t_*}$ is replaced by multiplication by $-i\sigma$. The normal operator family $\wt\Box(\sigma)$ of $\Box$ is defined completely analogously.

Since the Hodge d'Alembertian (and hence the normal operator family $\wt\Box(\sigma)$) has a scalar principal symbol, it can easily be shown to fit into the microlocal framework developed by Vasy \cite{VasyMicroKerrdS}; we prove this in Section~\ref{SecPrelim}, where we also recall the key elements of this framework. In particular, the family of inverses $\wt\Box(\sigma)^{-1}\colon\CI(\wt X)\to\CI(\wt X)$ is a meromorphic family of operators in $\sigma\in\C$,\footnote{Thus, the same is true for $(\td(\sigma)+\tdel(\sigma))^{-1}=(\td(\sigma)+\tdel(\sigma))\wt\Box(\sigma)^{-1}$.} and under the assumption that the inverse family $\wt\Box(\sigma)^{-1}$ verifies suitable bounds as $|\Re\sigma|\to\infty$ and $\Im\sigma>-C$ (for $C>0$ small), one can deduce exponential decay of solutions to $\Box u=0$, up to contributions from a finite dimensional space of \emph{resonances}. Here, resonances are poles of $\wt\Box(\sigma)^{-1}$, and resonant states (for simple resonances) are elements of the kernel of $\wt\Box(\sigma)$ for a resonance $\sigma$.\footnote{The outgoing boundary condition for an element $e^{-i t\sigma}a(x)$ in the kernel of $\Box_g$, with $a(x)$ a $t$-independent section of the form bundle on $M$, is precisely the condition that $e^{-i t\sigma}a(x)=e^{-i t_*\sigma}a_*(x)$ where $a_*$ is \emph{smooth down to} the boundary $\pa\ol X$.} Therefore, proving wave decay and asymptotics is reduced to studying \emph{high energy estimates}, which depend purely on geometric properties of the spacetime and will be further discussed below, and the location of resonances as well as the spaces of resonant states. (For instance, resonances in $\Im\sigma>0$ correspond to exponentially growing solutions and hence are particularly undesirable when studying non-linear problems.) Our main theorem is then:

\begin{thmi}
\label{ThmIntroSummary}
  Let $(M,g)$ be a manifold satisfying the assumptions stated at the beginning of this section. The only resonance of $d+\delta$ in $\Im\sigma\geq 0$ is then $\sigma=0$, and $0$ is a simple resonance. Zero resonant states are smooth, and the space $\wt\cH$ of these resonant states is equal to $\ker\td(0)\cap\ker\tdel(0)$. (In other words, resonant states, viewed as $t_*$-independent differential forms on $\wt M$, are annihilated by $d$ and $\delta$.) Using the grading $\wt\cH=\bigoplus_{k=0}^n\wt\cH^k$ of $\wt\cH$ by form degrees, there is a canonical exact sequence
  \begin{equation}
  \label{EqThmIntroHarmonicCoho}
    0\to H^k(\oX)\oplus H^{k-1}(\oX,\pa\oX) \to \wt\cH^k \to H^{k-1}(\pa\oX).
  \end{equation}

  Furthermore, the only resonance of $\Box$ in $\Im\sigma\geq 0$ is $\sigma=0$. Zero resonant states are smooth, and the space $\wt\cK=\bigoplus_{k=0}^n\wt\cK^k$ of these resonant states, graded by form degree and satisfying $\wt\cK^k\supset\wt\cH^k$, fits into the short exact sequence
  \begin{equation}
  \label{EqThmIntroBoxCoho}
    0 \to H^k(\oX)\oplus H^{k-1}(\oX,\pa\oX) \to \wt\cK^k \to H^{k-1}(\pa\oX) \to 0.
  \end{equation}

  Lastly, the Hodge star operator on $\wt M$ induces natural isomorphisms $\star\colon\wt\cH^k\xra{\cong}\wt\cH^{n-k}$ and $\star\colon\wt\cK^k\xra{\cong}\wt\cK^{n-k}$, $k=0,\ldots,n$.
\end{thmi}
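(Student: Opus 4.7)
The plan is to exploit the identity $(\td(\sigma)+\tdel(\sigma))^2=\wt\Box(\sigma)$, which ties the two resonance problems together; the microlocal framework from Section~\ref{SecPrelim} already supplies meromorphy for both families and Fredholm properties on suitable function spaces. To exclude resonances in $\{\Im\sigma\geq 0\}\setminus\{0\}$, I would introduce a positive-definite auxiliary pairing $\la\cdot,\cdot\ra_+$ on the form bundle by reversing the sign in the $dt_*$ direction, so that the natural indefinite form pairing is replaced by a Riemannian one on each $t_*$-slice of $\wt X$. For real nonzero $\sigma$, a boundary pairing computation at the radial-point horizons in $\wt X$ gives $\Im\la\wt\Box(\sigma)u,u\ra_+\gtrsim|\sigma|\|u\|_\ff^2$ for a boundary-flux norm and hence injectivity of $\wt\Box(\sigma)$; for $\Im\sigma>0$ one uses instead a direct energy estimate ruling out exponentially growing modes of the hyperbolic operator $\Box$.

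At $\sigma=0$, to prove $\wt\cH=\ker\td(0)\cap\ker\tdel(0)$, the nontrivial inclusion is $\ker(d+\delta)\subset\ker d\cap\ker\delta$ on $t_*$-independent smooth forms. Given $(d+\delta)u=0$, apply $d+\delta$ once more to get $\Box u=0$, and pair $\la\Box u,u\ra_+=0$: on $\wt X$, where the slice is Riemannian and the horizon degeneration is absorbed into the extension across $Y$, integration by parts in the auxiliary pairing produces $\la du,du\ra_+ + \la\delta u,\delta u\ra_+ = 0$ with no uncontrolled boundary contribution (the horizons being interior radial-point loci rather than boundary), forcing each pure-degree component of $du$ and $\delta u$ to vanish. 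Smoothness follows from elliptic regularity combined with radial-point propagation from \cite{VasyMicroKerrdS}. Simplicity — absence of Jordan blocks — follows by applying the same pairing to a putative Jordan chain $(d+\delta)v=u$, which forces $\la u,u\ra_+=\la(d+\delta)v,u\ra_+=\la v,(d+\delta)u\ra_+=0$, contradicting $u\neq 0$.

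For the cohomological sequences, decompose $u\in\wt\cH^k$ along $dt_*$ as $u=u_T+dt_*\wedge u_N$. The equations $du=0=\delta u$ decouple via the warped-product structure into independent systems for $u_T$ and $u_N$, identifying $u_T$ with an absolute harmonic form on $\oX$ (Neumann-type condition at $Y$, representing a class in $H^k(\oX)$) and $u_N$ with a relative harmonic form (Dirichlet-type at $Y$, representing a class in $H^{k-1}(\oX,\pa\oX)$). The map $\wt\cH^k\to H^{k-1}(\pa\oX)$ sends $u$ to the de Rham class of $u_N|_{\pa\oX}$, which is closed by the constraints; exactness at $\wt\cH^k$ in (\ref{EqThmIntroHarmonicCoho}) follows from the long exact sequence of the pair $(\oX,\pa\oX)$ combined with Hodge theory on $\oX$. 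For $\wt\cK^k$, one only imposes $\Box u=0$, so the tangential and normal systems are weaker and admit solutions for every class in $H^{k-1}(\pa\oX)$, giving the short exact sequence (\ref{EqThmIntroBoxCoho}). The Hodge star isomorphism follows from $\star d=\pm\delta\star$ and $\star\delta=\pm d\star$ on the Lorentzian $\wt M$, which shows that $\star$ preserves $\ker d\cap\ker\delta$ and $\ker\Box$; invertibility ($\star^2=\pm\Id$) yields the claimed degree-$k$ to degree-$(n-k)$ isomorphisms.

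The main technical difficulty is pushing the Hodge-theoretic pairing argument through in the indefinite setting: one must verify that the auxiliary positive-definite pairing makes $d$ and $\delta$ formally adjoint up to controllable terms so that $\la\Box u,u\ra_+$ genuinely produces $\|du\|_+^2+\|\delta u\|_+^2$, and that all contributions at the radial-point loci of $Y$ in $\wt X$ are inert. The other delicate point is identifying the image of $\wt\cH^k\to H^{k-1}(\pa\oX)$, which requires a careful cohomological analysis on the double of $\oX$ along $Y$; this is what explains the mere (left) exactness of (\ref{EqThmIntroHarmonicCoho}) versus the full short exactness of (\ref{EqThmIntroBoxCoho}) for $\Box$.
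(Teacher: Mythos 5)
Your outline has the right global architecture (square the operator, treat the three regimes $\Im\sigma>0$, $\sigma\in\R\setminus\{0\}$, $\sigma=0$ separately, and interpret the zero modes cohomologically), but the central analytic steps rest on an assumption that fails, namely that one can choose a positive definite pairing on the form bundle with respect to which $d$ and $\delta$ are formally adjoint ``up to controllable terms'' and for which the pairing $\la\Box u,u\ra_+$ integrates by parts with \emph{no} boundary contribution. The codifferential $\delta$ is the adjoint of $d$ with respect to the \emph{indefinite} fiber metric and the Lorentzian volume density $\alpha|dt\,dh|$; relative to the Riemannian pairing $H\oplus H$ and the density $\alpha|dh|$ on $X$ one has $\la\hd(0)u,v\ra=-\la u,\hdel(0)v\ra$ only modulo boundary terms at $Y$, and the resonant states genuinely carry $\alpha^{-1}$-singular components (the $(TN)$ and $(NT)$ entries of $\CI_{(0)}$), so they are \emph{not} in $L^2(\alpha|dh|)$ and the boundary terms do not vanish. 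The paper's Proposition~\ref{PropResStates} is devoted precisely to computing these boundary pairings, showing first that they are finite (whence $\hd u,\hdel u\in L^2$ and the singular coefficient $\wt u_{NT}|_Y$ is harmonic on $Y$), and then that they cancel \emph{by using the equation $(\hd+\hdel)u=0$ itself}. Declaring the horizons to be ``interior radial-point loci'' does not make this issue disappear: it only relocates it, since any smooth extension of the pairing across $Y$ destroys the adjointness of $d$ and $\delta$. The same defect invalidates your Jordan-chain argument for simplicity ($d+\delta$ is not symmetric for $\la\cdot,\cdot\ra_+$, and the identity $\la(d+\delta)v,u\ra_+=\la v,(d+\delta)u\ra_+$ has boundary corrections); the paper instead derives simplicity from the quantitative bound of Proposition~\ref{PropAbsenceImPos} in $\Im\sigma>0$ together with a computation of the rate ($\cO(|\sigma|^{-1/2})$) at which the $L^2$ norm of the data degenerates as $\sigma\to 0$. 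Similarly, a ``direct energy estimate'' for $\Im\sigma>0$ is unavailable on forms precisely because the energy is indefinite; the paper's substitute is the $\sigma$-adapted complex pairing $H\oplus e^{-2i\theta}H$ with $\sigma\in e^{i\theta}\cdot i(0,\infty)$, for which $\hd(\sigma)^*=-\hdel(\sigma)$, combined with a careful justification of the integrations by parts using cancellations in the most singular matrix entries of $\hd(\sigma)\sC$ and $\hdel(\sigma)\sC$.

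Two further points. For real $\sigma\neq 0$, the second-order operator one obtains for $u_T$, namely $d_X\alpha\delta_X\alpha+\alpha\delta_X\alpha d_X-d_X\alpha^2 d_X-\sigma^2$, is \emph{not} symmetric, so the boundary-pairing inequality you posit does not directly give injectivity; the paper first passes to $v_T=d_Xu_T$, for which $d_X\alpha\delta_X\alpha-\sigma^2$ \emph{is} symmetric, uses the boundary pairing only to kill the leading singular coefficient $\wt v_{TN}|_Y$, and then needs an indicial-root computation plus Mazzeo's unique continuation at infinity to conclude $v_T\equiv 0$ — none of which appears in your sketch. For the cohomological sequences, the decoupling of $du=0=\delta u$ into tangential and normal systems representing $H^k(\oX)$ and $H^{k-1}(\oX,\pa\oX)$ is correct \emph{only for the $L^2$ part} of a resonant state; the factor $H^{k-1}(\pa\oX)$ is realized by the map $r$ extracting the non-$L^2$ singular coefficient $\wt u_{NT}|_Y$ (which is harmonic on $Y$ by the boundary analysis above), not by restricting $u_N$ to $\pa\oX$. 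Exactness at $\wt\cH^k$ and the surjectivity of $r$ on $\wt\cK^k$ are then proved via a Hodge-type decomposition $\CI_{(0)}=\ker\wh\Box\oplus\ran\wh\Box$ and an explicit correction argument, rather than via the long exact sequence of the pair.
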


See Theorem~\ref{ThmSummary} for the full statement, including the precise definitions of the maps in the exact sequences. In fact, the various cohomology groups in \eqref{EqThmIntroHarmonicCoho} and \eqref{EqThmIntroBoxCoho} correspond to various types of resonant differential forms, namely forms which are square integrable on $X$ with respect to a natural Riemannian inner product on forms on $M$, induced by the metric obtained by switching the sign in \eqref{EqMetricSpacetime}, that is,
\begin{equation}
\label{EqRiem}
  \alpha^2\,dt^2+h,
\end{equation}
as well as `tangential' and `normal' forms in a decomposition $u=u_T+\alpha^{-1}\,dt\wedge u_N$ of the form bundle corresponding to the warped product structure of the metric. Roughly speaking, \eqref{EqThmIntroBoxCoho} encodes the fact that resonant states for which a certain boundary component vanishes are square integrable with respect to the natural Riemannian inner product on $X$ and can be shown to canonically represent absolute (for tangential forms) or relative (for normal forms) de Rham cohomology of $\oX$, while the aforementioned boundary component is a harmonic form on $Y$ and can be specified freely for resonant states of $\Box$. (Notice by contrast that the last map in the exact sequence \eqref{EqThmIntroHarmonicCoho} for $d+\delta$ is not necessarily surjective.)

The proof of Theorem~\ref{ThmIntroSummary} proceeds in several steps. First, we exclude resonances in $\Im\sigma>0$ in Section~\ref{SubsecAbsenceUpperHalfPlane}; the idea here is to relate the normal operator family of $d+\delta$ (a family of operators on the extended space $\wt X$) to another normal operator family $\hd(\sigma)+\hdel(\sigma)=e^{it\sigma}(d+\delta)e^{-it\sigma}$, which is a family of operators on $X$ that degenerates at $\pa\oX$, but has the advantage of having a simple form in view of the warped product type structure \eqref{EqMetricSpacetime} of the metric: since one formally obtains $\hd(\sigma)+\hdel(\sigma)$ by replacing each $\pa_t$ in the expression for $d+\delta$ by $-i\sigma$, we see that on a formal level $\hd(\sigma)+\hdel(\sigma)$ for purely imaginary $\sigma$ resembles the normal operator family of the Hodge--de Rham operator of the Riemannian metric~\eqref{EqRiem}; then one can show the triviality of $\ker(\hd(\sigma)+\hdel(\sigma))$ in a way that is very similar to how one would show the triviality of $\ker(A+\sigma)$ for self-adjoint $A$ and $\Im\sigma>0$. For not purely imaginary $\sigma$, but still with $\Im\sigma>0$, one can change the tangential part of the metric on $M$ in \eqref{EqMetricSpacetime} by a complex phase and then run a similar argument, using that the resulting `inner product,' while complex, still has some positivity properties. Next, in Section~\ref{SubsecAbsenceNonzeroReal}, we exclude non-zero real resonances by means of a boundary pairing argument, which is a standard technique in scattering theory, see e.g.\ Melrose \cite{MelroseGeometricScattering}. Finally, the analysis of the zero resonance in Section~\ref{SubsecZeroResonance} relies on a boundary pairing type argument, and we again use the Riemannian inner product on forms on $M$. The fact that this Riemannian inner product is singular at $\pa\oX$ implies that resonant states are not necessarily square integrable, and whether or not a state is square integrable is determined by the absence of a certain boundary component of the state. This is a crucial element of the cohomological interpretation of resonant states in Section~\ref{SubsecZeroResCohomology}.

As already alluded to, deducing wave expansions and decay from Theorem~\ref{ThmIntroSummary} requires high energy estimates for the normal operator family. These are easy to obtain if the metric $h$ on $X$ is \emph{non-trapping}, i.e.\ all geodesics escape to $\pa\oX$, as is the case for the static patch of de Sitter space (discussed in Section~\ref{SubsecDS}). Another instance in which suitable estimates hold is when the only trapping within $X$ is \emph{normally hyperbolic trapping}, as is the case for Kerr--de Sitter spaces with parameters in a certain range. (See \cite[\S5.1]{DyatlovResonanceProjectors} for the definition of ($r$-)normal hyperbolicity, and \cite{DyatlovWaveAsymptotics} for details in the Kerr and Kerr--de~Sitter settings.) In the scalar setting, such estimates are now widely available, see for instance Wunsch and Zworski \cite{WunschZworskiNormHypResolvent}, Dyatlov \cite{DyatlovSpectralGaps} and their use in Vasy \cite{VasyMicroKerrdS}: the proof of exponential decay relies on high energy estimates in a strip below the real line. For $\Box$ acting on differential forms, obtaining high energy estimates requires a smallness assumption on the imaginary part of the subprincipal symbol of $\Box$ relative to a \emph{positive definite} inner product on the form bundle; the choice of inner product affects the size of the subprincipal symbol. Conceptually, the natural framework in which to find such an inner product involves \emph{pseudodifferential inner products}. This notion was introduced by Hintz \cite{HintzPsdoInner} and used there to prove high energy estimates for $\Box$ on tensors of arbitrary rank on perturbations of Schwarzschild--de Sitter space. In the present paper, we use the estimates provided in \cite{HintzPsdoInner} as black boxes.

\subsection{Previous and related work}
\label{SubsecPrevious}

The present paper seems to be the first to describe asymptotics for differential forms solving the wave or Hodge--de Rham equation in all form degrees and in this generality, and also the first to demonstrate the forward solvability of non-scalar quasilinear wave equations on black hole spacetimes, but we point out that for applications in general relativity, our results require the cosmological constant to be positive, whereas previous works on Maxwell's equations deal with asymptotically flat spacetimes: Sterbenz and Tataru \cite{SterbenzTataruMaxwellSchwarzschild} showed local energy decay for Maxwell's equations on a class of spherically symmetric asymptotically flat spacetimes including Schwarzschild.\footnote{One needs to assume the vanishing of the electric and the magnetic charge. For a positive cosmological constant, this precisely corresponds to assuming the absence of the $r^{-2}\,dt\wedge dr$ and $\omega$ asymptotics in form degree $2$ in the Schwarzschild--de Sitter case of Theorem~\ref{ThmIntroKDS}.} Blue \cite{BlueMaxwellSchwarzschild} established conformal energy and pointwise decay estimates in the exterior of the Schwarzschild black hole; Andersson and Blue \cite{AnderssonBlueMaxwellKerr} proved similar estimates on slowly rotating Kerr spacetimes. These followed earlier results for Schwarzschild by Inglese and Nicolo \cite{IngleseNicoloMaxwellSchwarzschild} on energy and pointwise bounds for integer spin fields in the far exterior of the Schwarzschild black hole, and by Bachelot \cite{BachelotSchwarzschildScattering}, who proved scattering for electromagnetic perturbations. There are further works which in particular establish bounds for certain components of the Maxwell field, see Donninger, Schlag and Soffer \cite{DonningerSchlagSofferSchwarzschild} and Whiting \cite{WhitingKerrModeStability}. Dafermos \cite{DafermosEinsteinMaxwellScalarStability}, \cite{DafermosBlackHoleNoSingularities} studied the non-linear Einstein-Maxwell-scalar field system under the assumption of spherical symmetry.

Vasy's proof of the meromorphy of the (modified) resolvent of the Laplacian on differential forms on asymptotically hyperbolic spaces \cite{VasyHyperbolicFormResolvent} makes use of the same microlocal framework as the present paper, and it also shows how to link the `intrinsic' structure of the asymptotically hyperbolic space and the form of the Hodge-Laplacian with a `non-degenerately extended' space and operator. For Kerr--de Sitter spacetimes, Dyatlov \cite{DyatlovQNM} defined quasinormal modes or resonances in the same way as they are used here, and obtained exponential decay to constants away from the event horizon for scalar waves. This followed work of Melrose, S\'a Barreto and Vasy \cite{MelroseSaBarretoVasySdS}, where this was shown up to the event horizon of a Schwarzschild--de Sitter black hole, the work of Bony and H\"afner \cite{BonyHaefnerDecay} (following S\'a Barreto and Zworski \cite{SaBarretoZworskiResonances}) on full resonance expansions away from the horizons, and of Dafermos and Rodnianski \cite{DafermosRodnianskiSdS} who proved polynomial decay in this setting. Dyatlov proved exponential decay up to the event horizon for Kerr--de Sitter in \cite{DyatlovQNMExtended}, and significantly strengthened this in \cite{DyatlovAsymptoticDistribution}, obtaining a full resonance expansion for scalar waves.

In the scalar setting too, the wave equation on asymptotically flat spacetimes has received more attention. Dafermos, Rodnianski and Shlapentokh-Rothman \cite{DafermosRodnianskiShlapentokhRothmanDecay}, building on \cite{DafermosRodnianskiKerrBoundedness,DafermosRodnianskiKerrDecaySmall,ShlapentokhRothmanModeStability}, established the decay of scalar waves on all non-extremal Kerr spacetimes, following pioneering work by Kay and Wald \cite{KayWaldSchwarzschild,WaldSchwarzschild} in the Schwarzschild setting. Tataru and Tohaneanu \cite{TataruDecayAsympFlat,TataruTohaneanuKerrLocalEnergy} proved decay and Price's law for slowly rotating Kerr using local energy decay estimates, and Strichartz estimates were proved by Marzuola, Metcalfe, Tataru and Tohaneanu \cite{MarzuolaMetcalfeTataruTohaneanuStrichartz}.

Non-linear results for wave equations on black hole spacetimes include \cite{HintzVasyQuasilinearKdS}, see also the references therein, Luk's work \cite{LukKerrNonlinear} on semilinear forward problems on Kerr, and the scattering construction of dynamical black holes by Dafermos, Holzegel and Rodnianski \cite{DafermosHolzegelRodnianskiKerrBw}. Fully general stability results for Einstein's equations specifically are available for de Sitter space by the works of Friedrich \cite{FriedrichStability}, Anderson \cite{AndersonStabilityEvenDS}, Rodnianski and Speck \cite{RodnianskiSpeckEulerEinsteinDS} and Ringstr\"om \cite{RingstromEinsteinScalarStability}, and for Minkowski space by the work of Christodoulou and Klainerman \cite{ChristodoulouKlainermanStability}, partially simplified and extended by Lindblad and Rodnianski \cite{LindbladRodnianskiGlobalExistence,LindbladRodnianskiGlobalStability}, Bieri and Zipser \cite{BieriZipserStability} and Speck \cite{SpeckEinsteinMaxwell}.

\subsection{Structure of the paper}
\label{SubsecStructure}

In Section~\ref{SecPrelim}, we show how to put the Hodge--de Rham and wave equation on differential forms into the microlocal framework of \cite{VasyMicroKerrdS}; this is used in Section~\ref{SecResonances} in the analysis of resonances in $\Im\sigma\geq 0$, and we prove Theorem~\ref{ThmIntroSummary} there. In Section~\ref{SecApplications}, we apply this result on de Sitter space, where we can take a global point of view which simplifies explicit calculations considerably, and on Schwarzschild--de Sitter space, where such a global picture is not available, but using Theorem~\ref{ThmIntroSummary}, the necessary computations are still very straightforward. In Section~\ref{SecKdS}, we show the perturbation stability of the analysis, in particular deal with Kerr--de Sitter space, and indicate how this gives the forward solvability for quasilinear wave equations on differential forms.

\section{Analytic setup}
\label{SecPrelim}

Recall that we are working on a spacetime $M=\R_t\times X$, equipped with a metric $g$ as in \eqref{EqMetricSpacetime}-\eqref{EqMetricSpacetime2}, where $X$ is the interior of a connected, compact, orientable manifold $\oX$ with non-empty boundary $Y=\pa\oX\neq\emptyset$ and boundary defining function $\alpha\in\CI(\oX)$. Fixing a collar neighborhood of $Y$ identified with $[0,\ep)_\alpha\times Y$, denote by $\Xeven$ the manifold $\oX$ with the smooth structure changed so that only even functions in $\alpha$ are smooth, i.e.\ smooth functions are precisely those for which all odd terms in the Taylor expansion at all boundary components vanish. For brevity, we assume from now on that $Y$ is connected,
\begin{equation}
\label{EqMetricSpaceEven}
  h=\wt\beta(\alpha^2,y)^2\,d\alpha^2+k(\alpha^2,y,dy)
\end{equation}
in a collar neighborhood of $Y$, with $\wt\beta(\alpha^2,y)$ a positive constant at the boundary $\alpha=0$, so $\wt\beta(0,y)=\beta>0$; all of our arguments readily go through in
the case of multiple boundary components. The main examples of spaces which directly fit into this setup are the static patch of de Sitter space (with $1$ boundary component) and Schwarzschild--de Sitter space (with $2$ boundary components); see Section~\ref{SecApplications} for details.

On $M$, we consider the Hodge--de Rham operator $d+\delta$, acting on differential forms. We put its square, the Hodge d'Alembertian
\[
  \Box=(d+\delta)^2,
\]
which is principally scalar, into the microlocal framework developed in \cite{VasyMicroKerrdS}, which we briefly recall below. We shall mainly only make use of two of its consequences: one obtains a precise description of the regularity of resonant states (see Lemma~\ref{LemmaPolesBox} below) and, under additional dynamical hypotheses on the null-geodesic flow (which yield high energy estimates for the operator $\wt\Box(\sigma)$, discussed below), resonance expansions of waves as in Theorem~\ref{ThmIntroKDS}. The reader unfamiliar with the details of~\cite{VasyMicroKerrdS} may simply view these as black boxes; the main results in the present paper are orthogonal to those in the reference.

We renormalize the time coordinate $t$ in the collar neighborhood of $Y$ by writing
\begin{equation}
\label{EqTimeStar}
  t=t_*+F(\alpha),\quad \pa_\alpha F(\alpha)=-\frac{\wt\beta}{\alpha}-2\alpha c(\alpha^2,y)
\end{equation}
with $c$ smooth, hence $F(\alpha)\in-\beta\log\alpha+\CI(\Xeven)$; notice that the above requirement on $F$ only makes sense near $Y$. We introduce the boundary defining function $\mu=\alpha^2$ of $\Xeven$; then one computes
\begin{equation}
\label{EqMetricSpacetimeExt}
  g=\mu\,dt_*^2 - (\wt\beta+2\mu c)\,dt_*\,d\mu + (\mu c^2+\wt\beta c)\,d\mu^2 - k(\mu,y,dy).
\end{equation}
In particular, the determinant of $g$ in these coordinates equals $-\frac{\wt\beta^2}{4}\det(k)$, hence $g$ is non-degenerate up to $Y$. Furthermore, we claim that we can choose $c(\mu,y)$ such that $d t_*$ is timelike on $\R_{t_*}\times\Xeven$; indeed, with $G$ denoting the dual metric to $g$, we require
\begin{equation}
\label{EqTimelikeBdf}
  G(dt_*,dt_*) = -4\wt\beta^{-2}(\mu c^2+\wt\beta c)>0.
\end{equation}
This is trivially satisfied if $c=-\wt\beta/2\mu$, which corresponds to undoing the change of coordinates in \eqref{EqTimeStar}, however we want $c$ to be smooth at $\mu=0$. But for $\mu\geq 0$, \eqref{EqTimelikeBdf} holds provided $-\wt\beta/\mu<c<0$; hence, we can choose a smooth $c$ verifying \eqref{EqTimelikeBdf} in $\mu\geq 0$ and such that moreover $c=-\wt\beta/2\mu$ in $\mu\geq\mu_1$ (intersected with the collar neighborhood of $Y$) for any fixed $\mu_1>0$. Thus, we can choose $F$ as in \eqref{EqTimeStar} with $F=0$ in $\alpha^2\geq\mu_1$ (in particular, $F$ is defined globally on $X$) such that \eqref{EqTimelikeBdf} holds.

Since the metric $g$ in \eqref{EqMetricSpacetimeExt} is stationary ($t_*$-independent) and non-degenerate on $\Xeven$, it can be extended to a stationary Lorentzian metric on an extension $\wt X$ into which $\Xeven$ embeds. Concretely, one defines $\wt X_\delta=(\Xeven\sqcup([-\delta,\eps)_\mu\times Y_y))/\sim$ with the natural smooth structure, where $\sim$ identifies elements of $[0,\eps)_\mu\times Y_y$ with points in $\Xeven$ by means of the collar neighborhood of $Y$. Then, extending $\wt\beta$, $k$, and $c$, and thus $g$, in an arbitrary $t_*$-independent manner to $\wt X_\delta$, the extended metric, which we denote by $\wt g$, is non-degenerate on $\wt X_\delta$, and $d t_*$ remains timelike uniformly on $\R_{t_*}\times\wt X_\delta$, provided one fixes $\delta>0$ to be sufficiently small: indeed, in $\mu<0$, \eqref{EqTimelikeBdf} (with the dual metric $\wt G$ of $\wt g$ in place of $G$) holds for any negative function $c$ as long as $\wt\beta$ remains positive on $\wt X_\delta$. Note that $\mu^{-1}G(d\mu,d\mu)=-4\wt\beta^{-2}<0$ in $\mu>0$, so the level set $\{\mu=-\delta\}$ is \emph{spacelike} for the extended dual metric $\wt G$ if one reduces $\delta>0$ even further (if necessary). We let
\[
  \wt X:=\wt X_\delta
\]
for such a choice of $\delta$. There are two reasons for extending the spacetime a bit beyond the `horizon' $Y$: first, this makes the microlocal \emph{radial point} estimates of \cite{VasyMicroKerrdS} applicable; the microlocal approach is crucial later on, as it is the most stable and straightforward way to obtain the high energy estimates which are needed to deduce an expansion of solutions of the wave equation into quasinormal modes up to exponentially decaying (in $t_*$) remainders---this is discussed in the proofs of Theorems~\ref{ThmDSFull2} and~\ref{ThmSDSDDel2} below. Second, the microlocal framework is stable under perturbations that \emph{do not respect} the warped product structure near $Y$.\footnote{If one is not interested in these two issues, i.e.\ microlocal control and stability under perturbations, one can alternatively use Warnick's approach \cite{WarnickQNMs} to the definition of quasinormal modes.} We remark that instead of the complex absorption in the extension region $\{\mu<0\}\subset\wt X$ which was used in \cite{VasyMicroKerrdS}, we have introduced Cauchy hypersurfaces at $\{\mu=-\delta\}\subset\wt X$ (which may have several connected components) as in\footnote{In the notation of the reference, $\frakt_2=\mu+\delta$, while $\frakt_1$ is only used to define a Cauchy hypersurface $\{\frakt_1=0\}$ where one can impose Cauchy data for the wave or Hodge--de Rham equation; the choice of the latter is very flexible, and we could e.g.\ take $\frakt_1=t_*$.}~\cite[\S2.1.3]{HintzVasySemilinear} and \cite[\S8]{HintzQuasilinearDS}; these are spacelike by construction.

The operator $d+\delta_g$ on $M$ now extends to an operator $d+\delta_{\wt g}$ on $\wt M=\R_{t_*}\times\wt X$. Correspondingly, the wave operator $\Box_g$ on $M$ extends to the wave operator $\Box_{\wt g}$ on $\wt M$. Consider $\Box_{\wt g}$, which is invariant under translations in $t_*$, acting on differential forms which have time dependence $e^{-i t_*\sigma}$; that is, consider the operator
\[
  \wt{\Box}(\sigma) = e^{it_*\sigma}\Box e^{-it_*\sigma}.
\]
(This amounts to formally replacing each $\pa_{t_*}$ in the expression for $\Box$ by $-i\sigma$.) The operator $\wt\Box(\sigma)$ acts on sections of the pullback $\Lambda_{\wt X}\wt M$ of the form bundle $\Lambda\wt M$ under the map $\wt X\to\wt M$, $\wt x\mapsto(0,\wt x)$; Writing differential forms $\wt u$ on $\wt M$ as
\begin{equation}
\label{EqTildeBundleIdentification}
  \wt u=\wt u_T + dt_*\wedge\wt u_N
\end{equation}
with $\wt u_T$ and $\wt u_N$ valued in forms on $\wt X$, we can identify $\Lambda_{\wt X}\wt M$ with $\Lambda\wt X\oplus\Lambda\wt X$.\footnote{At this point, $\wt\Box(\sigma)$ is simply a family of operators depending on $\sigma\in\C$. Its relation to the wave operator $\Box_{\wt g}$ and use for the description of solutions of the wave equation, e.g.\ in the form of partial resonance expansions, requires precise control of $\wt\Box(\sigma)$ as an operator on suitable function spaces as $|\Re\sigma|\to\infty$---these are the high energy estimates mentioned before.}

We also record that $\wt\Box(\sigma)$ is elliptic in $X$: indeed, on $X$, we have
\begin{equation}
\label{EqMellinTransformRelation}
  \wt\Box(\sigma)=e^{-iF\sigma}e^{it\sigma}\Box e^{-it\sigma}e^{iF\sigma}=e^{-iF\sigma}\widehat\Box(\sigma)e^{iF\sigma},
\end{equation}
where $\widehat\Box(\sigma)=e^{it\sigma}\Box e^{-it\sigma}$ is the conjugation of $\Box$ by the Fourier transform in $-t$, and $F$ is as in \eqref{EqTimeStar}; here, we view $\widehat\Box(\sigma)$ as an operator acting on sections of $\Lambda_{\wt X}\wt M|_X$. Now, the latter bundle is isomorphic to $\Lambda X\oplus\Lambda X$, with the isomorphism given by writing differential forms as $u=u_T+dt\wedge u_N$, with $u_T$ and $u_N$ valued in forms on $X$; the relation of the expression of $\widehat\Box(\sigma)$ as a $2\times 2$ block matrix in this bundle decomposition with the decomposition~\eqref{EqTildeBundleIdentification} is given by conjugation by a bundle isomorphism on $\Lambda X\oplus\Lambda X$, which preserves ellipticity.\footnote{Ellipticity is the invertibility of the principal symbol---which in the present case is valued in endomorphisms of $\Lambda_{\wt X}\wt M|_X$---away from the zero section of $T^*X$; invertibility of endomorphisms is preserved by conjugation with isomorphisms.} The principal symbol of $\widehat\Box(\sigma)$ as a second order operator acting on sections of $\Lambda X\oplus\Lambda X$ is given by $(-H)\oplus(-H)$, where $H$ is the dual metric to $h$, here identified with the dual metric function on $T^*X$; this follows from the calculations in the next section. Since $H$ is Riemannian, this implies that $\widehat\Box(\sigma)$, hence $\wt\Box(\sigma)$, is elliptic in $X$.

Consider now $\wt\Box(\sigma)$ as an operator 
\begin{equation}
\label{EqBoxFred}
  \wt\Box(\sigma)\colon\cX^s\to\cY^{s-1},
\end{equation}
where
\[
  \cX^s=\{u\in\Hext^s(\wt X^\circ;\Lambda\wt X\oplus\Lambda\wt X)\colon\wt\Box(\sigma)u\in \cY^{s-1}\},\ \ 
  \cY^{s-1}=\Hext^{s-1}(\wt X^\circ;\Lambda\wt X\oplus\Lambda\wt X);
\]
here, using the notation introduced in~\cite[Appendix~B.2]{HormanderAnalysisPDE}, the bar denotes extendible distributions, i.e.\ $\Hext^s(\wt X^\circ)$ denotes the space of restrictions to $\wt X^\circ$ of $H^s$ functions on a compact manifold without boundary containing $\wt X^\circ$ as an open submanifold. The key result of \cite{VasyMicroKerrdS} is that for any fixed $C\in\R$ and for regularity above a certain threshold,
\begin{equation}
\label{EqThreshold}
  s>1/2+\hat\beta-\beta C,\quad \Im\sigma>-C,
\end{equation}
the operator~\eqref{EqBoxFred} is Fredholm, and indeed invertible for $\Im\sigma\gg 1$: this is \cite[Theorem~1.2]{VasyMicroKerrdS} with $Q_\sigma=0$, $\lambda=0$, $P_\sigma=\wt\Box(\sigma)$. The additional shift $\hat\beta$ is due to the fact that $\wh\Box(\sigma)$, $\sigma\in\R$, is not symmetric with respect to a \emph{positive definite} fiber inner product on the form bundle; see the proof of \cite[Propositions~2.3 and 2.4]{VasyMicroKerrdS}, esp.\ Equation~(2.15) there, for the contribution of $\wh\Box(\sigma)^*-\wh\Box(\sigma)$ to the radial point estimate, as well as \cite[Footnote~5]{HintzVasySemilinear} for the spacetime version of this estimate, i.e.\ prior to conjugating by the Fourier transform in $t_*$. The Fredholm property of~\eqref{EqBoxFred} follows from the ellipticity of $\wt\Box(\sigma)$ in $X$, from real principal type propagation estimates (or more simply, energy estimates) in the extension region $\{\mu<0\}$ where $\wt\Box(\sigma)$ is a hyperbolic (wave-type) operator, and the radial point estimates at $N^*\{\mu=0\}$ which use the source/sink nature of the Hamilton flow of the principal symbol of $\wt\Box(\sigma)$ there (this is the phase space manifestation of the classical red-shift effect); see \cite[\S4.8]{VasyMicroKerrdS} for a verification of these facts for metrics of the form~\eqref{EqMetricSpacetimeExt}.\footnote{See also~\cite[\S2.2]{VasyMicroKerrdS}, where $\Lambda_\pm=\{\mp c\,d\mu\colon c>0\}$ denotes the two components of $N^*\{\mu=0\}$, for a precise description of the relevant dynamical properties: $\Lambda_-$ is a source, $\Lambda_+$ a sink for the Hamilton flow of the principal symbol of $\wt\Box(\sigma)$.}

We remark that since $\wt\Box(\sigma)=(\td(\sigma)+\tdel(\sigma))^2\colon\cX^s\to\cY^{s-1}$ is an analytic family of Fredholm operators with meromorphic inverse, the map
\[
  (\td(\sigma)+\tdel(\sigma))^{-1} := (\td(\sigma)+\tdel(\sigma))\wt\Box(\sigma)^{-1} \colon \cY^{s-1}\to\cY^{s-1}
\]
is meromorphic as well for the same $s$ and $\sigma$ as above, and is a right inverse (away from its poles) of
\begin{equation}
\label{EqHodgeInv}
  \td(\sigma)+\tdel(\sigma)\colon\cZ^{s-1}\to\cY^{s-1},
\end{equation}
where $\cZ^{s-1}=\{u\in\Hext^{s-1}(\wt X;\Lambda\wt X\oplus\Lambda\wt X)\colon(\td(\sigma)+\tdel(\sigma))u\in\cY^{s-1}\}$. Increasing the lower bound on $s$ required in~\eqref{EqThreshold} by $1$, an element $u\in\ker_{\cY^{s-1}}\td(\sigma)+\tdel(\sigma)$ of course satisfies $u\in\cX^{s-1}$, hence $u\in\ker_{\cX^{s-1}}\wt\Box(\sigma)$, thus has above threshold regularity, which means it lies in a finite-dimensional space. Thus, with this increased requirement on $s$, the map~\eqref{EqHodgeInv} is Fredholm, invertible for $\Im\sigma\gg 1$, and satisfies high energy estimates provided $\wt\Box(\sigma)$ does.

To summarize this to the extent needed in the sequel, $\wt\Box(\sigma)$ is an analytic family of Fredholm operators on suitable function spaces, and the inverse family $\wt\Box(\sigma)^{-1}\colon\CI(\wt X;\Lambda\wt X\oplus\Lambda\wt X)\to\CI(\wt X;\Lambda\wt X\oplus\Lambda\wt X)$ (where we use the identification \eqref{EqTildeBundleIdentification}) admits a meromorphic continuation from $\Im\sigma\gg 0$ to the complex plane. Moreover (see \cite[Lemma~3.5]{VasyMicroKerrdS}), the Laurent coefficient at the poles are finite rank operators mapping sufficiently regular distributions to elements of $\CI(\wt X;\Lambda\wt X\oplus\Lambda\wt X)$. Note however that without further assumptions on the geodesic flow (for instance, semiclassical non-trapping or normally hyperbolic trapping), we do not obtain any high energy bounds, i.e.\ polynomial (in $\sigma$) estimates on the operator norm of $\wh\Box(\sigma)^{-1}\colon\cY^{s-1}\to\cX^s$ when $|\Re\sigma|\to\infty$ and $\Im\sigma\geq-C$ for (suitable) $C>0$.

\begin{lemma}
\label{LemmaPolesBox}
  A complex number $\sigma\in\C$ is a resonance of $\Box$, i.e.\ $\wt\Box(\sigma)^{-1}$ has a pole at $\sigma$, if and only if there exists a non-zero $u\in\alpha^{-i\beta\sigma}\CI(\Xeven;\Lambda\Xeven\oplus\Lambda\Xeven)$ (using the identification \eqref{EqTildeBundleIdentification}) such that $\widehat\Box(\sigma)u=0$.
\end{lemma}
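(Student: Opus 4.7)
\emph{Proof plan.} My strategy is to read the conjugation identity~\eqref{EqMellinTransformRelation} as a bijection between the Fredholm kernel of $\wt\Box(\sigma)$ on $\wt X$ and the $\widehat\Box(\sigma)$-kernel inside $\alpha^{-i\beta\sigma}\CI(\Xeven;\Lambda\Xeven\oplus\Lambda\Xeven)$, since resonances of $\Box$ are by definition those $\sigma$ for which the former is non-trivial. First I would unpack the conjugating factor: integrating the defining ODE $\pa_\alpha F=-\wt\beta/\alpha-2\alpha c$ from~\eqref{EqTimeStar} and using $\wt\beta\in\CI(\Xeven)$ with $\wt\beta(0,\cdot)=\beta$ yields $F(\alpha)=-\beta\log\alpha+f$ for some $f\in\CI(\Xeven)$, so that $e^{iF\sigma}=\alpha^{-i\beta\sigma}e^{i\sigma f}$ with $e^{i\sigma f}$ smooth and non-vanishing on $\Xeven$. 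Multiplication by $e^{iF\sigma}$ is thus an isomorphism $\CI(\Xeven;\Lambda\Xeven\oplus\Lambda\Xeven)\to\alpha^{-i\beta\sigma}\CI(\Xeven;\Lambda\Xeven\oplus\Lambda\Xeven)$ which intertwines $\wt\Box(\sigma)$ and $\widehat\Box(\sigma)$ on $X$ by~\eqref{EqMellinTransformRelation}.

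For the forward direction, a pole of $\wt\Box(\sigma)^{-1}$ produces, by the Fredholm statements recalled before the lemma, a nonzero resonant state $\wt u\in\CI(\wt X;\Lambda\wt X\oplus\Lambda\wt X)$ with $\wt\Box(\sigma)\wt u=0$. Setting $u:=e^{iF\sigma}\wt u|_X$ places $u$ in $\alpha^{-i\beta\sigma}\CI(\Xeven;\Lambda\Xeven\oplus\Lambda\Xeven)$ and gives $\widehat\Box(\sigma)u=0$ on $X$. Non-triviality of $u$ is equivalent to $\wt u|_X\not\equiv 0$, which follows from unique continuation: applying the inverse Mellin transform yields $u_*(t_*,x)=e^{-it_*\sigma}\wt u(x)\in\CI(\wt M)$ with $\Box u_*=0$, so if $\wt u$ vanished on $X$ then $u_*$ would vanish on the open set $\R_{t_*}\times X$, and well-posedness of the Cauchy problem for $\Box$ between the artificial Cauchy hypersurfaces of the setup of \cite{HintzVasySemilinear,HintzQuasilinearDS} forces $u_*\equiv 0$ and hence $\wt u\equiv 0$, a contradiction.

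For the converse, given a nonzero $u\in\alpha^{-i\beta\sigma}\CI(\Xeven)$ with $\widehat\Box(\sigma)u=0$, I would set $\wt u:=e^{-iF\sigma}u\in\CI(\Xeven)$, which satisfies $\wt\Box(\sigma)\wt u=0$ on $X$ by~\eqref{EqMellinTransformRelation} and hence on $\Xeven$ by continuity. To promote this to an element of the Fredholm kernel on $\wt X$, I extend $\wt u$ across $Y$ into $\wt X\setminus\Xeven$ by solving the Cauchy problem for $\wt\Box(\sigma)$ there, reading off Cauchy data from $\wt u$ at a spacelike hypersurface in $\Xeven$ close to $Y$ and propagating up to the artificial Cauchy hypersurfaces. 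The pasted $\wt u$ is smooth across $Y$, globally satisfies $\wt\Box(\sigma)\wt u=0$, is nonzero since it already is on $\Xeven$, and lies in the Fredholm function space; hence $\sigma$ is a pole of $\wt\Box(\sigma)^{-1}$.

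The principal technical point is the extension step: one must verify that the pasted $\wt u$ is smooth across the horizon $Y$ and belongs to the chosen function space on $\wt X$, i.e.\ that no regularity is lost as bicharacteristics of $\wt\Box(\sigma)$ pass through $Y$. This is precisely what the radial-point propagation estimates at $Y$ from \cite[\S2]{VasyMicroKerrdS}, together with standard hyperbolic propagation toward the Cauchy hypersurfaces, are designed to handle, so the argument reduces to invoking these tools rather than proving anything new.
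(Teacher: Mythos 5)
Your overall strategy---transporting kernels via the conjugation \eqref{EqMellinTransformRelation} together with the factorization $e^{iF\sigma}=\alpha^{-i\beta\sigma}e^{i\sigma f}$, $f\in\CI(\Xeven)$---is the same as the paper's, and your first paragraph is correct. But both implications have genuine gaps, at exactly the points where the horizon geometry matters.

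In the forward direction the issue is to rule out a nonzero resonant state $\wt u$ with $\wt u|_X\equiv 0$, i.e.\ supported in $\overline{\wt X\setminus X}$. Your appeal to ``well-posedness of the Cauchy problem between the artificial Cauchy hypersurfaces'' does not apply here: $Y=\{\mu=0\}$ is a characteristic (null) hypersurface, and from \eqref{EqMetricSpacetimeExt} one computes $G(d\mu,d\mu)=-4\mu/\wt\beta^2$, so through every point with $\mu<0$ there passes a past-directed null curve with $dt_*/d\mu=\wt\beta/\mu+c$ that asymptotes to the horizon as $t_*\to-\infty$ without ever entering $\{\mu>0\}$. Hence no point of $\wt X\setminus\Xeven$ lies in the domain of dependence of the static region, so finite speed of propagation gives nothing; nor can you invoke forward well-posedness for the mode $e^{-it_*\sigma}\wt u$, which does not vanish in the past. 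What is actually needed, and what the paper uses, is that $\wt u$ vanishes to infinite order at $Y$ together with \emph{unique continuation at infinity} on the asymptotically de Sitter region $\wt X\setminus\Xeven$, a semiclassical Carleman-type estimate as in \cite[Proposition~5.3]{VasyWaveOndS}. This is a genuinely different, and harder, input than an energy estimate.

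In the converse direction your extension step is not well posed as described: $\wt\Box(\sigma)$ is \emph{elliptic} on $X$ (its principal symbol is $(-H)\oplus(-H)$ with $H$ Riemannian), so there are no spacelike hypersurfaces inside $\Xeven$ from which to read off Cauchy data, and $Y$ itself is characteristic, so one cannot pose a Cauchy problem there either. The paper's route avoids this: take \emph{any} smooth extension $\wt u'$ of $e^{-iF\sigma}u$ to $\wt X$; then $\wt\Box(\sigma)\wt u'$ is supported in $\overline{\wt X\setminus X}$ and vanishes to infinite order at $Y$, and one solves $\wt\Box(\sigma)\wt v=-\wt\Box(\sigma)\wt u'$ in the hyperbolic region $\wt X\setminus X$ (where $\mu$ serves as a time function) with $\wt v$ vanishing to infinite order at $Y$, so that $\wt u'+\wt v$ is the desired element of $\ker\wt\Box(\sigma)$. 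Your closing appeal to radial-point propagation estimates gestures at the right machinery but does not repair the construction.
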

\begin{proof}
  If $\sigma\in\C$ is a resonance, then there exists a non-zero $\wt
  u\in\CI(\wt X;\Lambda\wt X\oplus\Lambda\wt X)$ with
  $\wt\Box(\sigma)\wt u=0$. Restricting to $\oX$, this implies by
  \eqref{EqMellinTransformRelation} and \eqref{EqTimeStar} that
  $\widehat\Box(\sigma)u=0$ for $u=e^{iF\sigma}\wt
  u|_{\oX}\in\alpha^{-i\beta\sigma}\CI(\Xeven;\Lambda\Xeven\oplus\Lambda\Xeven)$. If
  $u=0$, then $\wt u$ vanishes to infinite order at $Y$, and since
  $\wt\Box(\sigma)$ is a conjugate of a wave or Klein-Gordon operator
  on an asymptotically de Sitter space, see
  \cite{VasyMinkDSHypRelation}, unique continuation at infinity on the
  de Sitter side as in \cite[Proposition~5.3]{VasyWaveOndS} (which is
  in the scalar setting, but works similarly in the present context
  since it relies on a semiclassical argument in which only the
  principal symbol of the wave operator matters, and this is the same
  in our setting) shows that $\wt u\equiv 0$ on $\wt X$. This is the place where we use that we capped off $\wt X$ outside of $\Xeven$ by a Cauchy hypersurface: (pseudodifferential) complex absorption in principle would have the mildly undesirable effect of allowing for the existence of resonant states supported in $\wt X\setminus\ol X$, see~\cite[Proposition~3.9]{VasyMicroKerrdS}. Hence, $u\neq 0$, as desired.

  Conversely, given a $u\in\alpha^{-i\beta\sigma}\CI(\Xeven;\Lambda\Xeven\oplus\Lambda\Xeven)$ with $\widehat\Box(\sigma)u=0$, we define $\wt u'\in\CI(\wt X;\Lambda\wt X\oplus\Lambda\wt X)$ to be any smooth extension of $e^{-iF\sigma}u$ from $\Xeven$ to $\wt X$. Then $\wt\Box(\sigma)\wt u'$ is identically zero in $X$ and thus vanishes to infinite order at $Y$; hence, we can solve
  \[
    \wt\Box(\sigma)\wt v=-\wt\Box(\sigma)\wt u'
  \]
  in $\wt X\setminus X$ with $\wt v$ vanishing to infinite order at $Y$: this is a wave equation on an asymptotically de~Sitter space, as mentioned above, hence solvability is provided by \cite[Proposition~3.4 and Corollary~3.6]{VasyWaveOndS}. Thus, extending $\wt v$ by $0$ to $X$, we find that $\wt u=\wt u'+\wt v$ is a non-zero solution to $\wt\Box(\sigma)\wt u=0$ on $\wt X$.
\end{proof}

Since $\Box=(d+\delta)^2$, we readily obtain the following analogue of Lemma~\ref{LemmaPolesBox} for $d+\delta$:

\begin{lemma}
\label{LemmaPoles}
  The map\footnote{We drop the bundles from the notation for simplicity.} $\ker_{\CI(\wt X)}(\td(\sigma)+\tdel(\sigma))\to\ker_{\alpha^{-i\beta\sigma}\CI(\Xeven)}(\hd(\sigma)+\hdel(\sigma))$, $\wt u\mapsto e^{iF\sigma}\wt u|_X$, is an isomorphism. 
\end{lemma}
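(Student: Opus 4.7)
The plan is to reduce the statement to Lemma~\ref{LemmaPolesBox} via the identity $(\td(\sigma)+\tdel(\sigma))^2 = \wt\Box(\sigma)$ and the analogous identity on $X$, $(\hd(\sigma)+\hdel(\sigma))^2 = \widehat\Box(\sigma)$. The underlying conjugation relation, exactly parallel to \eqref{EqMellinTransformRelation}, is
\[
  \hd(\sigma)+\hdel(\sigma) = e^{iF\sigma}(\td(\sigma)+\tdel(\sigma))e^{-iF\sigma} \quad\text{on } X,
\]
because $t-t_*=F$. In particular, for $\wt u\in\CI(\wt X)$ one has $(\hd(\sigma)+\hdel(\sigma))(e^{iF\sigma}\wt u|_X) = e^{iF\sigma}\bigl((\td(\sigma)+\tdel(\sigma))\wt u\bigr)|_X$, so if $\wt u$ lies in $\ker(\td(\sigma)+\tdel(\sigma))$ then its image lies in $\ker(\hd(\sigma)+\hdel(\sigma))$; moreover $e^{iF\sigma}\in\alpha^{-i\beta\sigma}\CI(\Xeven)$ since $F\in-\beta\log\alpha+\CI(\Xeven)$, so the target space is correct. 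This shows the map is well-defined.

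For injectivity, suppose $e^{iF\sigma}\wt u|_X=0$. Since $e^{iF\sigma}$ is nowhere vanishing on $X$, we have $\wt u|_X\equiv 0$, hence $\wt u$ vanishes on $\Xeven$ and therefore to infinite order at $Y$. Applying $(\td(\sigma)+\tdel(\sigma))$ once more to $(\td(\sigma)+\tdel(\sigma))\wt u=0$ yields $\wt\Box(\sigma)\wt u=0$, so the unique continuation argument already invoked in the proof of Lemma~\ref{LemmaPolesBox} (using the Cauchy hypersurface cap off and the de Sitter-type unique continuation of \cite{VasyWaveOndS}) forces $\wt u\equiv 0$ on $\wt X$.

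For surjectivity, suppose $u\in\alpha^{-i\beta\sigma}\CI(\Xeven)$ satisfies $(\hd(\sigma)+\hdel(\sigma))u=0$; then $\widehat\Box(\sigma)u=0$, so Lemma~\ref{LemmaPolesBox} produces $\wt u\in\CI(\wt X)$ with $\wt\Box(\sigma)\wt u=0$ and $e^{iF\sigma}\wt u|_X=u$. Set $\wt w:=(\td(\sigma)+\tdel(\sigma))\wt u\in\CI(\wt X)$. Then $(\td(\sigma)+\tdel(\sigma))\wt w=\wt\Box(\sigma)\wt u=0$, and by the displayed conjugation formula $e^{iF\sigma}\wt w|_X=(\hd(\sigma)+\hdel(\sigma))u=0$, so $\wt w$ vanishes on $\Xeven$ and hence to infinite order at $Y$. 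Squaring once more gives $\wt\Box(\sigma)\wt w=0$, and the same unique continuation argument now yields $\wt w\equiv 0$ on $\wt X$, i.e.\ $(\td(\sigma)+\tdel(\sigma))\wt u=0$, as required. Since Lemma~\ref{LemmaPolesBox} is already in hand and the squaring trick dispenses with the need for any independent solvability theory for the first-order system $\td(\sigma)+\tdel(\sigma)$, there is no substantive obstacle beyond bookkeeping; the one thing to check carefully is the compatibility of smoothness and growth at $Y$ between the $\CI(\wt X)$ and $\alpha^{-i\beta\sigma}\CI(\Xeven)$ descriptions, which is built into the definition of $F$ in \eqref{EqTimeStar}.
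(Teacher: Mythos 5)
Your proposal is correct, and the well-definedness and injectivity parts coincide with the paper's argument (the paper likewise reduces injectivity to the unique continuation step in the proof of Lemma~\ref{LemmaPolesBox}). The surjectivity direction, however, follows a genuinely different route. The paper extends $e^{-iF\sigma}u$ arbitrarily to $\wt u'$, notes that $f:=(\td(\sigma)+\tdel(\sigma))\wt u'$ is supported in $\wt X\setminus X$, solves $\wt\Box(\sigma)\wt v'=-f$ there with $\wt v'$ vanishing to infinite order at $Y$, and then takes the correction in the specific form $\wt v=(\td(\sigma)+\tdel(\sigma))\wt v'$, so that $(\td(\sigma)+\tdel(\sigma))(\wt u'+\wt v)=f+\wt\Box(\sigma)\wt v'=0$ holds by construction; no further unique continuation is needed. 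You instead invoke the extension already built in the (proof of the) converse direction of Lemma~\ref{LemmaPolesBox} to get $\wt u\in\ker\wt\Box(\sigma)$ with $e^{iF\sigma}\wt u|_X=u$, and then upgrade it to $\ker(\td(\sigma)+\tdel(\sigma))$ by applying unique continuation to $\wt w=(\td(\sigma)+\tdel(\sigma))\wt u$, which vanishes on $X$ and satisfies $\wt\Box(\sigma)\wt w=0$. Both arguments rest on the same two black boxes (forward solvability of $\wt\Box(\sigma)$ in the capped region with rapidly vanishing data at $Y$, and unique continuation at infinity on the de Sitter side); yours trades the paper's clever choice of correction for a second application of unique continuation, which makes the reduction to the second-order result very transparent, while the paper's version is slightly more economical and directly exhibits the preimage. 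One small bookkeeping point: what you use is the construction in the \emph{proof} of Lemma~\ref{LemmaPolesBox} (namely that the $\wt\Box(\sigma)$-kernel element can be chosen to restrict to $e^{-iF\sigma}u$ on $X$), not just its statement, so you should cite it accordingly — the paper does the same for injectivity, so this is consistent with its conventions. Your incidental argument style, applying the operator and killing the resulting solution supported in $\wt X\setminus X$ by unique continuation, is in fact the same device the paper uses later (e.g.\ in Lemma~\ref{LemmaHodgeDecomp} and in the proof of Theorem~\ref{ThmSummary}).
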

\begin{proof}
  Since $\wt u\in\ker(\td(\sigma)+\tdel(\sigma))$ implies $\wt u\in\ker\wt\Box(\sigma)$, injectivity follows from the proof of Lemma~\ref{LemmaPolesBox}. To show surjectivity, take $u\in e^{iF\sigma}\CI(\Xeven)$ with $(\hd(\sigma)+\hdel(\sigma))u=0$ and choose any smooth extension $\wt u'$ of $e^{-iF\sigma}u$ to $\wt X$. Solving $\wt\Box(\sigma)\wt v'=-(\td(\sigma)+\tdel(\sigma))\wt u'$ with $\supp\wt v'\subset\wt X\setminus X$ and then defining $\wt v=(\td(\sigma)+\tdel(\sigma))\wt v'$, we see that $\wt u=\wt u'+\wt v$ extends $\wt u'$ to $\wt X$ and is annihilated by $\td(\sigma)+\tdel(\sigma)$.
\end{proof}

Thus, when studying the location and structure of resonances, we already have very precise information about regularity and asymptotics (on $X$) of potential resonant states.

\section{Resonances in \texorpdfstring{$\Im\sigma\geq 0$}{the closed upper half plane}}
\label{SecResonances}

Using Lemma~\ref{LemmaPoles}, we now study the resonances of in $\Im\sigma\geq 0$ by analyzing the operator $\hd(\sigma)+\hdel(\sigma)$ (and related operators) on $\Xeven$. Recall that a resonance at $\sigma\in\C$ and a corresponding resonant state $\wt u$ yield a solution $(d+\delta)(e^{-it_*\sigma}\wt u)=0$, hence $\Im\sigma>0$ implies in view of $|e^{-it_*\sigma}|=e^{t_*\Im\sigma}$ that $e^{-it_*\sigma}\wt u$ grows exponentially in $t_*$, whereas resonances with $\Im\sigma=0$ yield solutions which at most grow polynomially in $t_*$ (and do not decay). \emph{We will continue to drop the metric $g$ or $\wt g$ from the notation for brevity.}

In order to keep track of fiber inner products and volume densities, we will use the following notation.
\begin{definition}
\label{DefL2Spaces}
  For a density $\mu$ on $X$ and a complex vector bundle $\cE\to X$ equipped with a positive definite Hermitian form $B$, let $L^2(X,\mu;\cE,B)$ be the space of all sections $u$ of $\cE$ for which $\|u\|_{\mu,B}^2:=\int_X B(u,u)\,d\mu<\infty$.

  If $B$ is merely assumed to be sesquilinear (but not necessarily positive definite), we define the pairing
  \[
    \la u,v\ra_{\mu,B} := \int_X B(u,v)\,d\mu
  \]
  for all sections $u,v$ of $\cE$ for which $B(u,v)\in L^1(X,\mu)$. If the choice of the density $\mu$ or inner product $B$ is clear from the context, it will be dropped from the notation.
\end{definition}

\begin{rmk}
  It will always be clear what bundle $\cE$ we are using at a given time, so $\cE$ will from now on be dropped from the notation; also, $X$ will mostly be suppressed.
\end{rmk}

Since the metric $g$ in \eqref{EqMetricSpacetime} has a warped product structure and $\alpha\,dt$ has unit squared norm, it is natural to write differential forms on $M=\R_t\times X_x$ as
\begin{equation}
\label{EqFormDecomposition}
  u(t,x)=u_T(t,x)+\alpha\,dt\wedge u_N(t,x),
\end{equation}
where the tangential and normal forms $u_T$ and $u_N$ are $t$-dependent forms on $X$, and we will often write this as
\[
  u(t,x)=\begin{pmatrix}u_T(t,x) \\ u_N(t,x) \end{pmatrix}.
\]
Thus, the differential $d$ on $M$ is given in terms of the differential $d_X$ on $X$ by
\begin{equation}
\label{EqSpacetimeD}
  d = \begin{pmatrix} d_X & 0 \\ \alpha^{-1}\pa_t & -\alpha^{-1}d_X\alpha \end{pmatrix}.
\end{equation}
Since the dual metric is given by $G=\alpha^{-2}\pa_t^2-H$, the fiber inner product $G_k$ on $k$-forms is given by
\begin{equation}
\label{EqSpacetimeFiberMetric}
  G_k = \begin{pmatrix} (-1)^k H_k & 0 \\ 0 & (-1)^{k-1}H_{k-1} \end{pmatrix},
\end{equation}
where $H_q$ denotes the fiber inner product on $q$-forms on $X$. Furthermore, the volume density on $M$ is $|dg|=\alpha|dt\,dh|$, and we therefore compute the $L^2(M,|dg|)$-adjoint of $d$ to be
\begin{equation}
\label{EqSpacetimeDel}
  \delta = \begin{pmatrix} -\alpha^{-1}\delta_X\alpha & -\alpha^{-1}\pa_t \\ 0 & \delta_X \end{pmatrix},
\end{equation}
where $\delta_X$ is the $L^2(X,|dh|;\Lambda X,H)$-adjoint of $d_X$; the signs here are due to the signs in~\eqref{EqSpacetimeFiberMetric} which depend on the form degree. Thus,
\begin{equation}
\label{EqDiffHat}
  \hd(\sigma) = \begin{pmatrix} d_X & 0 \\ -i\sigma\alpha^{-1} & -\alpha^{-1}d_X\alpha \end{pmatrix}, \quad
  \hdel(\sigma) = \begin{pmatrix} -\alpha^{-1}\delta_X\alpha & i\sigma\alpha^{-1} \\ 0 & \delta_X \end{pmatrix}.
\end{equation}

In the course of our arguments we will need to justify various integrations by parts and boundary pairing arguments. This requires a precise understanding of the asymptotics of $u_T$ and $u_N$ for potential resonant states $u$ at $Y=\pa\Xeven$. To this end, we further decompose the bundle $\Lambda X\oplus\Lambda X$ near $Y$ by writing $u_T$ as
\begin{equation}
\label{EqBundleDecNearBdy}
  u_T=u_{TT}+d\alpha\wedge u_{TN}
\end{equation}
and similarly for $u_N$, hence
\begin{equation}
\label{EqBundleDecWarped}
  u = u_{TT} + d\alpha\wedge u_{TN} + \alpha\,dt\wedge u_{NT} + \alpha\,dt\wedge d\alpha\wedge u_{NN},
\end{equation}
where the $u_{\bullet\bullet}$ are forms on $X$ valued in $\Lambda Y$. Now for a resonant state $u$, we have
\begin{equation}
\label{EqBundleDecExtended}
  u=\alpha^{-i\beta\sigma}(\wt u'_{TT} + d(\alpha^2)\wedge\wt u'_{TN} + dt_*\wedge\wt u'_{NT} + dt_*\wedge d(\alpha^2)\wedge\wt u'_{NN})
\end{equation}
near $Y$ with $\wt u'_{\bullet\bullet}\in\CI(\Xeven;\Lambda Y)$, which we rewrite in terms of the decomposition \eqref{EqBundleDecWarped} using \eqref{EqTimeStar}, obtaining
\begin{align*}
  u=\alpha^{-i\beta\sigma}\bigl(&\wt u'_{TT} + d\alpha \wedge (2\alpha\wt u'_{TN}-F'(\alpha)\wt u'_{NT}) \\
    & \qquad + \alpha\,dt\wedge\alpha^{-1}\wt u'_{NT} + 2\alpha\,dt\wedge d\alpha\wedge\wt u'_{NN}\bigr);
\end{align*}
hence introducing the `change of basis' matrix
\[
  \sC=
	  \begin{pmatrix}
			1&0&0&0 \\
			0&\alpha&\beta\alpha^{-1}&0 \\
			0&0&\alpha^{-1}&0 \\
			0&0&0&1
	  \end{pmatrix}
\]
and defining the space
\begin{equation}
\label{EqDefCIsigma}
  \CI_{(\sigma)} := \sC\alpha^{-i\beta\sigma}\begin{pmatrix}\CI(\Xeven;\Lambda Y)\\ \CI(\Xeven;\Lambda Y)\\ \CI(\Xeven;\Lambda Y)\\ \CI(\Xeven;\Lambda Y)\end{pmatrix}
  \subset
  	  \begin{pmatrix}
 		\alpha^{-i\beta\sigma}\CI(\Xeven;\Lambda Y) \\
 		\alpha^{-i\beta\sigma-1}\CI(\Xeven;\Lambda Y) \\
 		\alpha^{-i\beta\sigma-1}\CI(\Xeven;\Lambda Y) \\
 		\alpha^{-i\beta\sigma}\CI(\Xeven;\Lambda Y)
	  \end{pmatrix},
\end{equation}
we obtain
\begin{equation}
\label{EqResState}
  \begin{pmatrix}u_{TT}\\u_{TN}\\u_{NT}\\u_{NN}\end{pmatrix}
    = \sC
	  \alpha^{-i\beta\sigma}
	  \begin{pmatrix}\wt u_{TT}\\\wt u_{TN}\\\wt u_{NT}\\\wt u_{NN}\end{pmatrix}
  \in\CI_{(\sigma)}
\end{equation}
with $\wt u_{\bullet\bullet}\in\CI(\Xeven;\Lambda Y)$, where the $u_{\bullet\bullet}$ are the components of $u$ in the decomposition \eqref{EqBundleDecWarped}.

We will also need the precise form of $\hd(\sigma)$ and $\hdel(\sigma)$ near $Y$. Since in the decomposition \eqref{EqBundleDecNearBdy}, the fiber inner product on $\Lambda X$-valued forms is $H=K\oplus\wt\beta^{-2}K$ in view of \eqref{EqMetricSpaceEven}, we have
\begin{equation}
\label{EqDifferentialNearY}
  d_X=\begin{pmatrix}d_Y&0\\ \pa_\alpha&-d_Y\end{pmatrix}
    \quad\tn{and}\quad
  \delta_X=\begin{pmatrix}\delta_Y & \pa_\alpha^* \\ 0& -\wt\beta^2\delta_Y\wt\beta^{-2}\end{pmatrix},
\end{equation}
where $d_Y$ is the differential on $Y$ and $\pa_\alpha^*$ is the formal adjoint of $\pa_\alpha\colon\CI(X;\Lambda Y)\subset L^2(X,|dh|;\Lambda Y,K)\to L^2(X,|dh|;\Lambda Y,\wt\beta^{-2}K)$. Thus, if $\wt\beta$ and $k$ are independent of $\alpha$ near $Y$, we simply have $\pa_\alpha^*=-\beta^{-2}\pa_\alpha$, and in general
\begin{equation}
\label{EqPaAAdj}
  \pa_\alpha^*=-\beta^{-2}\pa_\alpha+\alpha^2 p_1\pa_\alpha+\alpha p_2,\quad p_1,p_2\in\CI(\Xeven).
\end{equation}

Finally, we compute the form of $\hd(\sigma)$ near $Y$ acting on forms as in \eqref{EqResState}:
\begin{equation}
\label{EqDifferentialC}
  \hd(\sigma)\sC=
    \begin{pmatrix}
	  d_Y & 0 & 0 & 0 \\
	  \pa_\alpha & -\alpha d_Y & -\beta\alpha^{-1}d_Y & 0 \\
	  -i\sigma\alpha^{-1} & 0 & -\alpha^{-1}d_Y & 0 \\
	  0 & -i\sigma & -i\sigma\beta\alpha^{-2}-\alpha^{-1}\pa_\alpha & d_Y
	\end{pmatrix}.
\end{equation}
Thus, applying $\hd(\sigma)$ to $u\in\CI_{(\sigma)}$ yields an element
\[
  \hd(\sigma)u
  \in \begin{pmatrix}
 		\alpha^{-i\beta\sigma}\CI(\Xeven;\Lambda Y) \\
 		\alpha^{-i\beta\sigma-1}\CI(\Xeven;\Lambda Y) \\
 		\alpha^{-i\beta\sigma-1}\CI(\Xeven;\Lambda Y) \\
 		\alpha^{-i\beta\sigma}\CI(\Xeven;\Lambda Y)
	  \end{pmatrix},
\]
where we use that there is a cancellation in the $(4,3)$ entry of $\hd(\sigma)\sC$ in view of $(i\sigma\beta\alpha^{-2}+\alpha^{-1}\pa_\alpha)\alpha^{-i\beta\sigma}=0$; without this cancellation, the fourth component of $\hd(\sigma)u$ would only lie in $\alpha^{-i\beta\sigma-2}\CI(\Xeven;\Lambda Y)$. Similarly, we compute
\begin{equation}
\label{EqCodifferentialC}
  \hdel(\sigma)\sC=
    \begin{pmatrix}
	  -\delta_Y & -\alpha^{-1}\pa_\alpha^*\alpha^2 & -\beta\alpha^{-1}\pa_\alpha^*+i\sigma\alpha^{-2} & 0 \\
	  0 & \alpha\wt\beta^2\delta_Y\wt\beta^{-2} & \beta\alpha^{-1}\wt\beta^2\delta_Y\wt\beta^{-2} & i\sigma\alpha^{-1} \\
	  0 & 0 & \alpha^{-1}\delta_Y & \pa_\alpha^* \\
	  0 & 0 & 0 & -\wt\beta^2\delta_Y\wt\beta^{-2}
	\end{pmatrix},
\end{equation}
thus applying $\hdel(\sigma)$ to $u\in\CI_{(\sigma)}$ also gives an element
\[
  \hdel(\sigma)u
  \in \begin{pmatrix}
 		\alpha^{-i\beta\sigma}\CI(\Xeven;\Lambda Y) \\
 		\alpha^{-i\beta\sigma-1}\CI(\Xeven;\Lambda Y) \\
 		\alpha^{-i\beta\sigma-1}\CI(\Xeven;\Lambda Y) \\
 		\alpha^{-i\beta\sigma}\CI(\Xeven;\Lambda Y)
	  \end{pmatrix},
\]
where there is again a cancellation in the $(1,3)$ entry of $\hdel(\sigma)\sC$; without this cancellation, the first component of $\hdel(\sigma)u$ would only lie in $\alpha^{-i\beta\sigma-2}\CI(\Xeven;\Lambda Y)$.

In fact, a bit more is true: namely, one checks that\footnote{Either, this follows by a direct computation; or one notes that these operators are equal (up to a smooth phase factor) to the matrices of the Fourier transforms in $t_*$ of $d$ and $\delta$ with respect to the form decomposition \eqref{EqBundleDecExtended}, which are smooth on the extended manifold $\wt X$.}  $\alpha^{i\beta\sigma}\sC^{-1}\hd(\sigma)\sC\alpha^{-i\beta\sigma}$ and $\alpha^{i\beta\sigma}\sC^{-1}\hdel(\sigma)\sC\alpha^{-i\beta\sigma}$ preserve the space $\CI(\Xeven;\Lambda Y)^4$ (in the decomposition \eqref{EqBundleDecExtended}), hence if $u\in\CI_{(\sigma)}$, then also $\hd(\sigma)u,\hdel(\sigma)u\in\CI_{(\sigma)}$. Since it will be useful later, we check this explicitly for $\sigma=0$ by computing
\begin{equation}
\label{EqDifferentialCinvC}
  \sC^{-1}\hd(0)\sC
   = \begin{pmatrix}
       d_Y & 0 & 0 & 0 \\
	   \alpha^{-1}\pa_\alpha & -d_Y & 0 & 0 \\
	   0 & 0 & -d_Y & 0 \\
	   0 & 0 & -\alpha^{-1}\pa_\alpha & d_Y
     \end{pmatrix}
\end{equation}
and
\begin{equation}
\label{EqCodifferentialCinvC}
  \sC^{-1}\hdel(0)\sC
   = \begin{pmatrix}
       -\delta_Y & -\alpha^{-1}\pa_\alpha^*\alpha^2 & -\alpha^{-1}\pa_\alpha^*\beta & 0 \\
	   0 & \wt\beta^2\delta_Y\wt\beta^{-2} & \beta\alpha^{-2}\wt\beta^2[\delta_Y,\wt\beta^{-2}] & -\beta\alpha^{-1}\pa_\alpha^* \\
	   0 & 0 & \delta_Y & \alpha\pa_\alpha^* \\
	   0 & 0 & 0 & -\wt\beta^2\delta_Y\wt\beta^{-2}
     \end{pmatrix}.
\end{equation}

\subsection{Absence of resonances in \texorpdfstring{$\Im\sigma>0$}{the upper half plane}}
\label{SubsecAbsenceUpperHalfPlane}

The fiber inner product on the form bundle is not positive definite, thus we cannot use standard arguments for (formally) self-adjoint operators to exclude a non-trivial kernel of $\hd(\sigma)+\hdel(\sigma)$. We therefore introduce a different inner product (by which we mean here a non-degenerate sesquilinear form), related to the natural inner product induced by the metric, which does have some positivity properties. Concretely, for $\theta\in(-\pi/2,\pi/2)$, we use the inner product $H\oplus e^{-2i\theta}H$, i.e.\ on pure degree $k$-forms on $M$, the fiber inner product is given by $H_k\oplus e^{-2i\theta}H_{k-1}$ in the decomposition into tangential and normal components as in \eqref{EqFormDecomposition}.

\begin{lemma}
\label{LemmaThetaNondegeneracy}
  Let $\theta\in(-\pi/2,\pi/2)$. Suppose that $u\in L^2(\alpha|dh|;H\oplus H)$ is such that $\la u,u\ra_{H\oplus e^{-2i\theta}H}=0$. Then $u=0$.
\end{lemma}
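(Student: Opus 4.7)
The plan is a direct computation that exploits the positive-definiteness of the Riemannian fiber inner product $H$ together with the constraint that $\theta$ is bounded away from $\pm\pi/2$.

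First I would use the decomposition \eqref{EqFormDecomposition}, writing $u = u_T + \alpha\,dt\wedge u_N$, so that by the very definition of the modified pairing the sesquilinear form splits cleanly into tangential and normal contributions:
\begin{equation*}
  \la u, u\ra_{H\oplus e^{-2i\theta}H} \;=\; A + e^{-2i\theta} B,
\end{equation*}
where
\begin{equation*}
  A := \int_X H(u_T, u_T)\,\alpha\,|dh|, \qquad B := \int_X H(u_N, u_N)\,\alpha\,|dh|.
\end{equation*}
Since $H$ is the fiber inner product induced by the Riemannian metric $h$, it is pointwise positive definite, so both $A$ and $B$ are non-negative real numbers; the hypothesis $u\in L^2(\alpha|dh|;H\oplus H)$ guarantees $A+B<\infty$, so we are manipulating finite quantities.

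Next I would read off real and imaginary parts of the assumed equation $A+e^{-2i\theta}B=0$. The imaginary part gives $-\sin(2\theta)\,B = 0$, and the real part gives $A+\cos(2\theta)\,B=0$. If $\theta\in(-\pi/2,\pi/2)\setminus\{0\}$, then $\sin(2\theta)\neq 0$, which forces $B=0$ and hence $A=0$. If $\theta=0$, the assumption reduces directly to $A+B=0$ with $A,B\geq 0$, again giving $A=B=0$. In either case $u_T$ and $u_N$ have vanishing $L^2$-norm with respect to the positive definite inner product $H$, so $u_T=0$ and $u_N=0$, whence $u=0$.

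I do not foresee any substantive obstacle; the argument is essentially a two-line identity about a single complex number $A+e^{-2i\theta}B$. The only conceptual point worth noting is the role of the constraint $\theta\in(-\pi/2,\pi/2)$: this is precisely what ensures $\sin(2\theta)$ vanishes only at $\theta=0$ inside this interval, so the modified pairing retains enough positivity to be non-degenerate. At the excluded endpoints $\theta=\pm\pi/2$ the pairing becomes $H\oplus(-H)$, which \emph{is} indefinite and for which the conclusion would genuinely fail.
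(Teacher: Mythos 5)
Your proof is correct and is essentially the same as the paper's: both reduce the hypothesis to the scalar identity $\|u_T\|^2 + e^{-2i\theta}\|u_N\|^2 = 0$ with nonnegative coefficients and then extract positivity. The paper avoids your case split at $\theta=0$ by multiplying the identity by $e^{i\theta}$ and taking real parts, which gives $\cos(\theta)\,(\|u_T\|^2+\|u_N\|^2)=0$ in one step, but this is only a cosmetic difference.
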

\begin{proof}
  With $u=u_T+\alpha\,dt\wedge u_N$, we have $\|u_T\|^2_{L^2(\alpha|dh|;H)} + e^{-2i\theta}\|u_N\|^2_{L^2(\alpha|dh|;H)}=0$. Multiplying this equation by $e^{i\theta}$ and taking real parts gives
  \[
    \cos(\theta)\|u\|^2_{L^2(\alpha|dh|;H\oplus H)}=0,
  \]
  hence $u=0$, since $\cos\theta>0$ for $\theta$ in the given range.
\end{proof}

Using the volume density $\alpha|dh|$ to compute adjoints,\footnote{See Definition~\ref{DefL2Spaces} for the notation used here.} we have
\[
  \la\hd(\sigma)u,v\ra_{H\oplus e^{-2i\theta}H} = \la u,\widehat{\delta_\theta}(\sigma)v\ra_{H\oplus e^{-2i\theta}H},\quad u,v\in\CIc(X;\Lambda X\oplus\Lambda X)
\]
for the operator
\[
  \widehat{\delta_\theta}(\sigma)
    = \begin{pmatrix}
		\alpha^{-1}\delta_X\alpha & ie^{2i\theta}\bar\sigma\alpha^{-1} \\
		0 & -\delta_X
	  \end{pmatrix},
\]
which equals $-\hdel(\sigma)$ provided $e^{2i\theta}\bar\sigma=-\sigma$, i.e.\ $\sigma\in e^{i\theta}\cdot i(0,\infty)$.

\begin{rmk}
  Since the inner product $H\oplus e^{-2i\theta}H$ is not Hermitian, we do \emph{not} have $\la\widehat{\delta_\theta}(\sigma)u,v\ra_{H\oplus e^{-2i\theta}H}=\la u,\hd(\sigma)v\ra_{H\oplus e^{-2i\theta}H}$ in general. Rather, one computes
  \begin{equation}
  \label{EqDeltaThetaAdjoint}
  \begin{split}
    \la\widehat{\delta_\theta}(\sigma)u,v&\ra_{H\oplus e^{2i\theta}H} = \overline{\la v,\widehat{\delta_\theta}(\sigma)u\ra_{H\oplus e^{-2i\theta}H}} \\
	& = \overline{\la\hd(\sigma)v,u\ra_{H\oplus e^{-2i\theta}H}} = \la u,\hd(\sigma)v\ra_{H\oplus e^{2i\theta}H}.
  \end{split}
  \end{equation}
\end{rmk}

Now suppose $u\in\CI_{(\sigma)}$ is a solution, with $\im\sigma>0$, of
\begin{equation}
\label{EqDplusDel}
  (\hd(\sigma)+\hdel(\sigma))u=0.
\end{equation}
We claim that every such $u$ must vanish. To show this, we apply $\hd(\sigma)$ to \eqref{EqDplusDel} and pair the result with $u$; this gives
\begin{equation}
\label{EqIBPCodifferential}
\begin{split}
  0&=\la\hd(\sigma)\hdel(\sigma)u,u\ra_{H\oplus e^{-2i\theta}H} = \la\hdel(\sigma)u,\widehat{\delta_\theta}(\sigma)u\ra_{H\oplus e^{-2i\theta}H} \\
    &= -\la\hdel(\sigma)u,\hdel(\sigma)u\ra_{H\oplus e^{-2i\theta}H},
\end{split}
\end{equation}
where we choose $\theta\in(-\pi/2,\pi/2)$ so that $\sigma\in e^{i\theta}\cdot i(0,\infty)$; the integration by parts will be justified momentarily. By Lemma~\ref{LemmaThetaNondegeneracy}, this implies $\hdel(\sigma)u=0$. On the other hand, applying $\hdel(\sigma)$ to \eqref{EqDplusDel} and using \eqref{EqDeltaThetaAdjoint}, we get, for $\sigma\in e^{i\theta}\cdot i(0,\infty)$,
\begin{equation}
\label{EqIBPDifferential}
\begin{split}
  0&=\la\hdel(\sigma)\hd(\sigma)u,u\ra_{H\oplus e^{2i\theta}H} = -\la\widehat{\delta_\theta}(\sigma)\hd(\sigma)u,u\ra_{H\oplus e^{2i\theta}H} \\
  &= -\la\hd(\sigma)u,\hd(\sigma)u\ra_{H\oplus e^{2i\theta}H},
\end{split}
\end{equation}
hence $\hd(\sigma)u=0$ by Lemma~\ref{LemmaThetaNondegeneracy}, again modulo justifying the integration by parts.

Using the splitting \eqref{EqFormDecomposition} and the form \eqref{EqDiffHat} of $\hd(\sigma)$, the second component of the equation $\hd(\sigma)u=0$ gives $i\sigma u_T+d_X\alpha u_N=0$. Taking the $L^2(\alpha|dh|;H)$-pairing of this with $u_T$ gives (the integration by parts to be justified below)
\begin{equation}
\label{EqIBPVanishing}
  0=i\sigma\|u_T\|^2 + \la d_X\alpha u_N,u_T\ra = i\sigma\|u_T\|^2 + \la u_N,\delta_X\alpha u_T\ra,
\end{equation}
and then the first component of $\hdel(\sigma)u=0$, i.e.\ $\delta_X\alpha u_T=i\sigma u_N$, can be used to rewrite the pairing on the right hand side; we obtain $0=i(\sigma\|u_T\|^2 - \bar\sigma\|u_N\|^2)$. Writing $\sigma=ie^{i\theta}\wt\sigma$ with $\wt\sigma>0$ real, this becomes
\begin{equation}
\label{EqAbsenceImPosQualitative}
  0=\wt\sigma(e^{i\theta}\|u_T\|^2 + e^{-i\theta}\|u_N\|^2),
\end{equation}
and taking the real part of this equation gives $u_T=0=u_N$, hence $u=0$.

We now justify the integrations by parts used in \eqref{EqIBPCodifferential} and \eqref{EqIBPDifferential}, which is only an issue at $Y$. First of all, since $u\in\CI_{(\sigma)}$ and $\Im\sigma>0$, the pairings are well-defined in the strong sense that all functions which appear in the pairings are elements of $L^2(\alpha|dh|;H\oplus H)$; in fact, all functions in these pairings lie in $\CI_{(\sigma)}$. In view of the block structure $H\oplus e^{-2i\theta}H=K\oplus\wt\beta^{-2}K\oplus e^{-2i\theta}K\oplus\wt\beta^{-2}e^{-2i\theta}K$ of the inner product, the only potentially troublesome term for the integration by parts is the pairing of the first components, since this is where we need the cancellation of two too singular summands mentioned after \eqref{EqCodifferentialC} to ensure that $\hdel(\sigma)u\in L^2$. Integrating by parts separately in each of the summands of one factor of the $L^2$ pairing, one can only use the cancellation in (i.e.\ the $L^2$-membership of) the other factor; that is, we integrate by parts in a pairing (of the first components) of an element of $\alpha^{-i\beta\sigma}\CI(\Xeven;\Lambda Y)$ (using the cancellation) with one in $\alpha^{-i\beta\sigma-2}\CI(\Xeven;\Lambda Y)$ (not using the cancellation), thus this pairing is still absolutely integrable and the integration by parts is justified. Likewise, the integration by parts used in \eqref{EqIBPDifferential} only has potential issues in the pairing of the fourth components, since we need the cancellation mentioned after \eqref{EqDifferentialC} to ensure that $\hd(\sigma)u\in L^2$. But again, if we only use this cancellation in one of the terms, we pair $\alpha^{-i\beta\sigma}\CI(\Xeven;\Lambda Y)$ against $\alpha^{-i\beta\sigma-2}\CI(\Xeven;\Lambda Y)$, which is absolutely integrable.

In order to justify \eqref{EqIBPVanishing}, we observe using \eqref{EqDifferentialNearY} that near $Y$,
\[
  u_T,d_X\alpha u_N\in\begin{pmatrix}\alpha^{-i\beta\sigma}\CI\\ \alpha^{-i\beta\sigma-1}\CI\end{pmatrix},\quad
  u_N,\delta_X\alpha u_T\in\begin{pmatrix}\alpha^{-i\beta\sigma-1}\CI \\ \alpha^{-i\beta\sigma}\CI\end{pmatrix},
\]
where we write $\CI=\CI(\Xeven;\Lambda Y)$. These membership statements do not rely on any cancellations, and since all these functions are in $L^2(\alpha|dh|;\Lambda Y,K)$ near $Y$, the integration by parts in \eqref{EqIBPVanishing} is justified.

We summarize the above discussion and extend it to a quantitative version:
\begin{prop}
\label{PropAbsenceImPos}
  There exists a constant $C>0$ such that for all $\sigma\in\C$ with $\Im\sigma>0$, we have the following estimate for $u\in\CI_{(\sigma)}$:
  \begin{equation}
  \label{EqAbsenceImPosQuant}
    \|u\|_{L^2(\alpha|dh|;H\oplus H)} \leq C\frac{|\sigma|}{|\Im\sigma|^2}\|(\hd(\sigma)+\hdel(\sigma))u\|_{L^2(\alpha|dh|;H\oplus H)}.
  \end{equation}
\end{prop}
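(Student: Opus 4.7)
The plan is to follow the qualitative argument preceding the proposition, tracking the error $f := (\hd(\sigma)+\hdel(\sigma))u$ throughout. Write $\sigma = ie^{i\theta}\wt\sigma$ with $\wt\sigma = |\sigma|$ and $\theta \in (-\pi/2,\pi/2)$, so that $\cos\theta = |\Im\sigma|/|\sigma|$, and use the sesquilinear form $\la\cdot,\cdot\ra_{H\oplus e^{-2i\theta}H}$ on $L^2(\alpha|dh|;\Lambda X\oplus\Lambda X)$. The elementary inequalities
\[
  |\la v,v\ra_{H\oplus e^{-2i\theta}H}| \geq \cos\theta\,\|v\|_{L^2(\alpha|dh|;H\oplus H)}^2,\qquad |\la v,w\ra_{H\oplus e^{-2i\theta}H}|\leq \|v\|\,\|w\|,
\]
(the first obtained as in Lemma~\ref{LemmaThetaNondegeneracy}) will play the role of a coercivity and a Cauchy--Schwarz bound.

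First I would obtain $L^2$ control on $\hd(\sigma)u$ and $\hdel(\sigma)u$ separately. Using $\hd(\sigma)^2 = 0 = \hdel(\sigma)^2$ (the Fourier conjugates of $d^2=0$ and $\delta^2=0$), we have $\hd(\sigma)\hdel(\sigma)u = \hd(\sigma)f$ and $\hdel(\sigma)\hd(\sigma)u = \hdel(\sigma)f$. Pairing the first identity with $u$ in $H\oplus e^{-2i\theta}H$, running through exactly the manipulation in \eqref{EqIBPCodifferential} on the left side and moving $\hd(\sigma)$ back onto $u$ via the adjoint $\widehat{\delta_\theta}(\sigma)=-\hdel(\sigma)$ on the right, yields
\[
  \la\hdel(\sigma)u,\hdel(\sigma)u\ra_{H\oplus e^{-2i\theta}H} = \la f,\hdel(\sigma)u\ra_{H\oplus e^{-2i\theta}H};
\]
the coercivity and Cauchy--Schwarz bounds then give $\|\hdel(\sigma)u\|\leq (\cos\theta)^{-1}\|f\|$. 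The symmetric argument using \eqref{EqDeltaThetaAdjoint} and the inner product $H\oplus e^{2i\theta}H$ yields $\|\hd(\sigma)u\|\leq (\cos\theta)^{-1}\|f\|$. The integrations by parts are justified exactly as in the qualitative argument, since $u\in\CI_{(\sigma)}$ and the potentially borderline pairings involve a single cancellation at most.

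Next I would recover $\|u\|$ from these. Let $g = \hd(\sigma)u$ and $h = \hdel(\sigma)u$. The second component of $\hd(\sigma)u=g$ and the first of $\hdel(\sigma)u=h$ read, using \eqref{EqDiffHat},
\[
  i\sigma u_T + d_X\alpha u_N = -\alpha g_N,\qquad \delta_X\alpha u_T - i\sigma u_N = -\alpha h_T.
\]
Taking the $L^2(\alpha|dh|;H)$-pairing of the first with $u_T$, integrating $d_X\alpha$ by parts (justified as in \eqref{EqIBPVanishing} since no cancellations are required), and substituting the second relation yields
\[
  i\sigma\|u_T\|^2 - i\bar\sigma\|u_N\|^2 = -\la\alpha g_N,u_T\ra + \la u_N,\alpha h_T\ra.
\]
Substituting $\sigma = ie^{i\theta}\wt\sigma$ converts the left side to $-\wt\sigma\bigl(e^{i\theta}\|u_T\|^2 + e^{-i\theta}\|u_N\|^2\bigr)$, whose real part is $-\wt\sigma\cos\theta\,\|u\|^2_{L^2(\alpha|dh|;H\oplus H)}$, extending the identity \eqref{EqAbsenceImPosQualitative}.

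Finally I would estimate the right side by Cauchy--Schwarz (using that $\alpha$ is bounded on $\oX$):
\[
  |\la\alpha g_N,u_T\ra| + |\la u_N,\alpha h_T\ra| \leq C\bigl(\|g\| + \|h\|\bigr)\|u\| \leq \frac{2C}{\cos\theta}\|f\|\,\|u\|,
\]
and combine with the previous identity to obtain $|\sigma|\cos\theta\,\|u\|^2 \leq 2C(\cos\theta)^{-1}\|f\|\,\|u\|$, which rearranges via $\cos^2\theta = |\Im\sigma|^2/|\sigma|^2$ to the desired \eqref{EqAbsenceImPosQuant}. The only real obstacle is the bookkeeping of the integrations by parts, but these were already handled in the qualitative discussion preceding the proposition and go through verbatim.
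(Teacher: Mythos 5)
Your proposal is correct and follows essentially the same route as the paper's proof: the same two identities $\la\hdel(\sigma)u,\hdel(\sigma)u\ra_{H\oplus e^{-2i\theta}H}=\la f,\hdel(\sigma)u\ra_{H\oplus e^{-2i\theta}H}$ and its counterpart for $\hd(\sigma)u$, followed by the same component/boundary-pairing step recovering $\|u\|$ from $\|\hd(\sigma)u\|+\|\hdel(\sigma)u\|$. The only (cosmetic) difference is that you extract the intermediate bound by applying the coercivity $|\la v,v\ra|\geq\cos\theta\,\|v\|^2$ to each identity separately, whereas the paper sums the two identities with weights $e^{\pm i\theta}$ before taking real parts; both give the same $(\cos\theta)^{-1}\|f\|$ control.
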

\begin{proof}
  Write $\sigma=ie^{i\theta}\wt\sigma$, $\theta\in(-\pi/2,\pi/2)$, $\wt\sigma>0$, as before. Let $f=(\hd(\sigma)+\hdel(\sigma))u$; in particular $f\in\CI_{(\sigma)}$. Then $\hd(\sigma)\hdel(\sigma)u=\hd(\sigma)f$, so
  \begin{equation}
  \label{EqAbsenceQuant1}
    \la\hdel(\sigma)u,\hdel(\sigma)u\ra_{H\oplus e^{-2i\theta}H}=-\la\hd(\sigma)\hdel(\sigma)u,u\ra_{H\oplus e^{-2i\theta}H}=\la f,\hdel(\sigma)u\ra_{H\oplus e^{-2i\theta}H},
  \end{equation}
  and similarly
  \begin{equation}
  \label{EqAbsenceQuant2}
    \la\hd(\sigma)u,\hd(\sigma)u\ra_{H\oplus e^{2i\theta}H} = \la f,\hd(\sigma)u\ra_{H\oplus e^{2i\theta}H}.
  \end{equation}
  Multiply \eqref{EqAbsenceQuant1} by $e^{i\theta}$, \eqref{EqAbsenceQuant2} by $e^{-i\theta}$ and take the sum of both equations to get
  \begin{align*}
    e^{i\theta}(\|(\hdel(\sigma)u)_T\|^2+\|(\hd(\sigma)&u)_N\|^2) + e^{-i\theta}(\|(\hdel(\sigma)u)_N\|^2+\|(\hd(\sigma)u)_T\|^2) \\
      &=e^{i\theta}\la f,\hdel(\sigma)u\ra_{H\oplus e^{-2i\theta}H} + e^{-i\theta}\la f,\hd(\sigma)u\ra_{H\oplus e^{2i\theta}H}.
  \end{align*}
  Here, the norms without subscript are $L^2(\alpha|dh|;H\oplus H)$-norms as usual. Taking the real part and applying Cauchy--Schwarz to the right hand side produces the estimate
  \begin{equation}
  \label{EqAbsenceQuant3}
    \|\hd(\sigma)u\|+\|\hdel(\sigma)u\| \leq \frac{4}{\cos\theta}\|f\|=\frac{4|\sigma|}{\Im\sigma}\|f\|.
  \end{equation}
  We estimate $u$ in terms of the left hand side of \eqref{EqAbsenceQuant3} by following the arguments leading to \eqref{EqAbsenceImPosQualitative}: put $v=\hd(\sigma)u$ and $w=\hdel(\sigma)u$. Then $i\sigma u_T+d_X\alpha u_N=-\alpha v_N$; we pair this with $u_T$ in $L^2(\alpha|dh|;H)$ and obtain
  \[
    i\sigma\|u_T\|^2 + \la u_N,\delta_X\alpha u_T\ra = -\la\alpha v_N,u_T\ra.
  \]
  Using $-\delta_X\alpha u_T+i\sigma u_N=\alpha w_T$, this implies
  \[
    i\sigma\|u_T\|^2 - i\bar\sigma\|u_N\|^2 = -\la\alpha v_N,u_T\ra + \la u_N,\alpha w_T\ra,
  \]
  thus
  \[
    \wt\sigma(e^{i\theta}\|u_T\|^2+e^{-i\theta}\|u_N\|^2) = \la\alpha v_N,u_T\ra - \la u_N,\alpha w_T\ra.
  \]
  Taking the real part and applying Cauchy--Schwarz, we get
  \[
    (\cos\theta)\|u\| \leq |\sigma|^{-1}(\|\alpha v\|+\|\alpha w\|) \lesssim |\sigma|^{-1}(\|v\|+\|w\|).
  \]
  In combination with \eqref{EqAbsenceQuant3}, this yields \eqref{EqAbsenceImPosQuant}.
\end{proof}

\subsection{Boundary pairing and absence of non-zero real resonances}
\label{SubsecAbsenceNonzeroReal}

We proceed to exclude non-zero real resonances for $d+\delta$ by means of a boundary pairing argument similar to \cite[\S2.3]{MelroseGeometricScattering}.

\begin{prop}
\label{PropAbsenceNonZero}
  Suppose $\sigma\in\R$, $\sigma\neq 0$. If $u\in\CI_{(\sigma)}$ solves $(\hd(\sigma)+\hdel(\sigma))u=0$, then $u=0$.
\end{prop}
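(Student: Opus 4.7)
The plan is a boundary pairing argument at the boundary component $Y=\pa\Xeven$, in the spirit of Melrose's scattering theory. For $\sigma\in\R$, the operator $P(\sigma):=\hd(\sigma)+\hdel(\sigma)$ is formally self-adjoint on $X$ with respect to the $L^2$-pairing with volume density $\alpha|dh|$ and the signed fiber inner product $(-1)^k H_k\oplus(-1)^{k-1}H_{k-1}$ on $k$-forms inherited from the Lorentzian form \eqref{EqSpacetimeFiberMetric}. I would first compute, for $u,v\in\CI_{(\sigma)}$,
\[
  B_\epsilon(u,v):=\la P(\sigma)u,v\ra_{\{\alpha\geq\epsilon\}} - \la u,P(\sigma)v\ra_{\{\alpha\geq\epsilon\}},
\]
which by formal self-adjointness reduces to a boundary integral at $\{\alpha=\epsilon\}$ coming from integrating by parts the $\pa_\alpha$ pieces of $d_X$ and $\delta_X$ displayed in \eqref{EqDifferentialNearY}.

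The next step is to evaluate $\lim_{\epsilon\to 0}B_\epsilon(u,u)$ by substituting the normal form $u=\sC\alpha^{-i\beta\sigma}\wt u$ from \eqref{EqResState} with $\wt u\in\CI(\Xeven;\Lambda Y)^4$ and using the explicit matrices \eqref{EqDifferentialC}, \eqref{EqCodifferentialC}. Since $\sigma$ is real, $\alpha^{-i\beta\sigma}\overline{\alpha^{-i\beta\sigma}}=1$, so the oscillatory factors do not wash out the boundary integral. The $\pa_\alpha$ acting on $\alpha^{-i\beta\sigma}$ produces a factor $-i\beta\sigma/\alpha$, which combines with the singular entries of $\sC$ to yield a finite, non-trivial limit: a direct computation should give
\[
  \lim_{\epsilon\to 0}B_\epsilon(u,u)=c\sigma\int_Y Q(\wt u|_Y,\wt u|_Y)\,|dk|
\]
with $c\neq 0$ a constant depending on $\beta$, and $Q$ a definite Hermitian form on the full quadruple of boundary traces $(\wt u_{TT},\wt u_{TN},\wt u_{NT},\wt u_{NN})|_Y$. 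Because $P(\sigma)u=0$, the left-hand side vanishes identically in $\epsilon$; since $\sigma\neq 0$ and $Q$ is definite, every component of $\wt u|_Y$ must vanish.

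Once the leading traces vanish, I would iterate at the level of formal Taylor series at $Y$: writing $u=\alpha^{-i\beta\sigma}\sum_{j\geq 0}\alpha^{2j}\sC\wt u^{(j)}$, the equations $\hd(\sigma)u=0=\hdel(\sigma)u$ (both of which follow from $P(\sigma)u=0$ by applying $\hd(\sigma)$ and $\hdel(\sigma)$, since $\hd(\sigma)^2=\hdel(\sigma)^2=0$; here one uses \eqref{EqDifferentialCinvC}--\eqref{EqCodifferentialCinvC} with $\sigma$ restored) decouple into triangular linear systems in each Taylor order, whose solvability forces $\wt u^{(j)}\equiv 0$ on $Y$ for every $j$. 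Thus $u$ vanishes to infinite order at $Y$. Passing through Lemma~\ref{LemmaPoles} and invoking the unique continuation at infinity on the de Sitter side as in the proof of Lemma~\ref{LemmaPolesBox} yields $u\equiv 0$ on $\wt X$.

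The principal difficulty will be Step~2: tracking signs carefully through the warped-product identification, handling the $\alpha^{-1}$ entries of $\sC$ and of the matrices \eqref{EqDifferentialC}, \eqref{EqCodifferentialC}, and verifying both that the boundary Hermitian form $Q$ is actually definite on the four boundary components and that the factor of $\sigma$ is genuinely present (so that the argument breaks only at $\sigma=0$, consistent with the fact that a non-trivial zero resonance exists). The key mechanism---that in the absence of an ``incoming'' counterpart $\alpha^{+i\beta\sigma}$ in $\CI_{(\sigma)}$, the outgoing boundary data must itself vanish---is exactly the standard scattering-theoretic dichotomy, but implemented here on a vector bundle with the delicate indicial structure forced upon us by $\sC$.
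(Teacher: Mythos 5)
Your overall skeleton (boundary pairing, then a Taylor/indicial argument, then unique continuation) is indeed the shape of the paper's argument, but three of your key steps do not hold as stated. First, the claimed identity $\lim_{\eps\to 0}B_\eps(u,u)=c\sigma\int_Y Q(\wt u|_Y,\wt u|_Y)\,|dk|$ with $Q$ \emph{definite on the full quadruple} of traces is false. In the limit $\eps\to0$ the induced surface measure carries a factor $\eps$, so any term in which both factors are bounded (the $\alpha^{-i\beta\sigma}\CI$ components) is killed; the only surviving terms contain at least one factor of the singular coefficient $\wt u_{NT}|_Y$ (coming from the $\alpha^{-1}$ entries of $\sC$). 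Hence the limiting form vanishes identically whenever $\wt u_{NT}|_Y=0$, i.e.\ it is necessarily degenerate, and moreover it contains cross terms (of the type appearing in \eqref{EqCodiffBdyPairings}--\eqref{EqDiffBdyPairings} at $\sigma=0$) which carry no factor of $\sigma$ and no sign. This is why the paper does not pair the first-order operator with itself: it passes to $v_T=d_Xu_T$, which solves $(d_X\alpha\delta_X\alpha-\sigma^2)v_T=0$ with $d_X\alpha\delta_X\alpha$ symmetric (the second-order operator \eqref{EqNonZeroEqn2} satisfied by $u_T$ itself is \emph{not} symmetric), and the pairing then yields exactly $-2i\beta^{-2}\sigma\|\wt v_{TN}|_Y\|^2$, i.e.\ control of the singular trace only — nothing more can be extracted at this stage. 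Second, your parenthetical claim that $\hd(\sigma)u=0$ and $\hdel(\sigma)u=0$ follow from $(\hd(\sigma)+\hdel(\sigma))u=0$ "by applying $\hd(\sigma)$ and $\hdel(\sigma)$ since the squares vanish" is a non sequitur: applying $\hd(\sigma)$ gives only $\hd(\sigma)\hdel(\sigma)u=0$, and deducing $\hdel(\sigma)u=0$ from this requires a positivity or boundary-pairing argument — at $\sigma=0$ this deduction is precisely the content of Proposition~\ref{PropResStates}, and for $\Im\sigma>0$ it uses the $\theta$-dependent inner products; no such argument is available for free at real $\sigma\neq 0$, so your Taylor-series iteration starts from premises you have not established.

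Third, even granting infinite-order vanishing of $u$ at $Y$, your concluding appeal to "unique continuation at infinity on the de Sitter side as in Lemma~\ref{LemmaPolesBox}" is the wrong tool: that argument propagates infinite-order vanishing at $Y$ into the asymptotically de Sitter region $\wt X\setminus X$ (in Lemma~\ref{LemmaPolesBox} one already knows $\wt u|_X=0$, so this suffices there), but it says nothing about vanishing on $X$ itself, which is what you need here. The required input is unique continuation at infinity on the asymptotically hyperbolic ($0$-geometry) side: the paper verifies that $v_T$ is annihilated by a second-order $0$-operator whose principal part is that of $\alpha^2\Delta_X$ and then invokes Mazzeo's theorem \cite{MazzeoUniqueContinuation} to conclude that the rapidly vanishing $v_T$ is identically zero, and then repeats the whole scheme for $u_T$ (the equation \eqref{EqNonZeroEqn} finally gives $u_N=0$ using $\sigma\neq0$). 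Without this step — or some substitute Carleman-type argument on the $X$ side — your proof does not close.
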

\begin{proof}
  We proceed in the usual three steps: \begin{enumerate*} \item vanishing of the leading coefficient at the horizon, \item rapid decay at the horizon, \item unique continuation. \end{enumerate*}

  \textit{Step (1).} Writing $u=u_T+\alpha\,dt\wedge u_N$ as usual, we can expand $(\hd(\sigma)+\hdel(\sigma))u=0$ as
  \begin{gather}
    \label{EqNonZeroEqn} (\alpha d_X-\delta_X\alpha)u_T + i\sigma u_N = 0 \\
	-i\sigma u_T + (-d_X\alpha+\alpha\delta_X)u_N = 0. \nonumber
  \end{gather}
  Applying $(-d_X\alpha+\alpha\delta_X)$ to the first equation and using the second equation to simplify the resulting expression produces a second order equation for $u_T$,
  \begin{equation}
  \label{EqNonZeroEqn2}
    (d_X\alpha\delta_X\alpha + \alpha\delta_X\alpha d_X - d_X\alpha^2 d_X - \sigma^2)u_T=0.
  \end{equation}
  Writing $u_T=u_{TT}+d\alpha\wedge u_{TN}$ as in \eqref{EqBundleDecNearBdy}, we see from the definition of the space $\CI_{(\sigma)}$ that
  \[
    u_T\in\CI_{(\sigma),T}:=\alpha^{-i\beta\sigma}\CI(\Xeven;\Lambda Y)\oplus\alpha^{-i\beta\sigma-1}\CI(\Xeven;\Lambda Y)
  \]
  near $Y$. Notice that for $\sigma\in\R$, the space $\CI_{(\sigma),T}$ barely fails to be contained in $L^2(\alpha|dh|)$.
  
  We will deduce from \eqref{EqNonZeroEqn2} that $u_T=0$; equation \eqref{EqNonZeroEqn} then gives $u_N=0$, as $\sigma\neq 0$. Now, the $L^2(\alpha|dh|;H)$-adjoint of $d_X\alpha$ is $\delta_X\alpha$, hence even ignoring the term $d_X\alpha^2 d_X$, the operator in \eqref{EqNonZeroEqn2} is not symmetric. However, we can obtain a simpler equation from \eqref{EqNonZeroEqn2} by applying $d_X$ to it; write $v_T=d_X u_T\in\CI_{(\sigma),T}$, and near $Y$,
  \[
    v_T=\begin{pmatrix}\alpha^{-i\beta\sigma}\wt v_{TT} \\ \alpha^{-i\beta\sigma-1}\wt v_{TN}\end{pmatrix}, \quad \wt v_{TT},\wt v_{TN}\in\CI(\Xeven;\Lambda Y).
  \]
  Then $v_T$ satisfies the equation
  \[
    (d_X\alpha\delta_X\alpha-\sigma^2)v_T=0,
  \]
  and $d_X\alpha\delta_X\alpha$ \emph{is} symmetric with respect to the $L^2(\alpha|dh|;H)$-inner product. We now compute the boundary pairing formula (using the same inner product); to this end, pick a cutoff function $\chi\in\CI(\oX)$ such that in a collar neighborhood $[0,\delta)_\alpha\times Y_y$ of $Y$ in $\oX$, $\chi=\chi(\alpha)$ is identically $0$ near $\alpha=0$ and identically $1$ in $\alpha\geq\delta/2$, and extend $\chi$ by $1$ to all of $\oX$. Define $\chi_\eps(\alpha)=\chi(\alpha/\eps)$ and $\chi'_\eps(\alpha)=\chi'(\alpha/\eps)$. Then
  \begin{equation}
  \label{EqBdyPairing1}
  \begin{split}
    0 &= \lim_{\eps\to 0}(\la(d_X\alpha\delta_X\alpha-\sigma^2)v_T,\chi_\eps v_T\ra - \la v_T,\chi_\eps(d_X\alpha\delta_X\alpha-\sigma^2)v_T\ra) \\
	  &= \lim_{\eps\to 0}\la v_T, [d_X\alpha\delta_X\alpha,\chi_\eps]v_T\ra.
  \end{split}
  \end{equation}
  The coefficients of the commutator are supported near $Y$, hence we use \eqref{EqDifferentialNearY} and \eqref{EqPaAAdj} to compute its form as
  \begin{align*}
    [d_X\alpha\delta_X\alpha,\chi_\eps]
	  &=\left[\begin{pmatrix}d_Y\alpha\delta_Y\alpha & d_Y\alpha\pa_\alpha^*\alpha \\ \pa_\alpha\alpha\delta_Y\alpha & \pa_\alpha\alpha\pa_\alpha^*\alpha + d_Y\alpha\wt\beta^2\delta_Y\wt\beta^{-2}\alpha \end{pmatrix}, \chi_\eps\right] \\
	  &= \begin{pmatrix}
	  		0 & d_Y\alpha[\pa_\alpha^*,\chi_\eps]\alpha \\
			[\pa_\alpha,\chi_\eps]\alpha\delta_Y\alpha & [\pa_\alpha\alpha\pa_\alpha^*\alpha,\chi_\eps]
		\end{pmatrix} \\
	  &= \eps^{-1}
	  	\begin{pmatrix}
	 		0 & -\beta^{-2}(\alpha^2+\cO(\alpha^4))\chi'_\eps d_Y + \alpha^4\chi_\eps'[d_Y,p_1] \\
			\chi'_\eps\alpha\delta_Y\alpha & \chi'_\eps\alpha\pa_\alpha^*\alpha - \pa_\alpha(\alpha^2+\cO(\alpha^4))\beta^{-2}\chi'_\eps
		\end{pmatrix}.
  \end{align*}
  In \eqref{EqBdyPairing1}, the off-diagonal terms of this give terms of the form
  \begin{equation}
  \label{EqBdyPairingVanishingTerm}
    \int_Y\int \alpha^{\mp i\beta\sigma}\alpha^{\pm i\beta\sigma-1}\eps^{-1}\alpha^2\chi'_\eps \wt v \,d\alpha\,|dk|
  \end{equation}
  with $\wt v\in\CI(\Xeven)$, and are easily seen to vanish in the limit $\eps\to 0$. The non-zero diagonal term gives\footnote{Recall that the volume density is given by $\alpha|dh|=\alpha\beta\,d\alpha|dk|$, and the fiber inner product in the $(TN)$-component is $\wt\beta^{-2}K$.} a term which comes from the $\cO(\alpha^4)$ summand and vanishes in the limit $\eps\to 0$, plus
  \begin{align*}
    \eps^{-1}&\la\alpha^{-i\beta\sigma-1}\wt v_{TN},(\chi'_\eps\alpha\pa_\alpha^*\alpha-\pa_\alpha\alpha^2\beta^{-2}\chi'_\eps)\alpha^{-i\beta\sigma-1}\wt v_{TN}\ra_{L^2(X;\alpha\beta\,d\alpha|dk|;\Lambda Y;\wt\beta^{-2}K)} \\
	  &= 2\int_Y\int\la\wt v_{TN},i\beta^{-2}\sigma\wt v_{TN}\ra_K \eps^{-1}\chi'_\eps\,d\alpha|dk| + o(1) \\
	  &\qquad\xrightarrow{\eps\to 0} -2i\beta^{-2}\sigma\|\wt v_{TN}|_Y\|^2_{L^2(Y,|dk|;K)};
  \end{align*}
  here, both summands in the pairing yield the same result, as is most easily seen by integrating by parts in $\alpha$, hence the factor of $2$, and the $o(1)$-term comes from differentiating $\wt v_{TN}$, which produces a term of the form \eqref{EqBdyPairingVanishingTerm}. We thus arrive at
  \[
    0=\la(d_X\alpha\delta_X\alpha-\sigma^2)v_T,v_T\ra-\la v_T,(d_X\alpha\delta_X\alpha-\sigma^2)v_T\ra = -2i\beta^{-2}\sigma\|\wt v_{TN}|_Y\|^2,
  \]
  whence $\wt v_{TN}|_Y=0$ in view of $\sigma\neq 0$, so we in fact have
  \begin{equation}
  \label{EqImprovedOrder}
    v_T = \begin{pmatrix}\alpha^{-i\beta\sigma}\wt v_{TT} \\ \alpha^{-i\beta\sigma+1}\wt v'_{TN}\end{pmatrix}, \quad \wt v'_{TN}\in\CI(\Xeven;\Lambda Y).
  \end{equation}

\textit{Step (2).} For the next step, recall that on a manifold with boundary $\oX$, 0-vector fields, introduced by Mazzeo
and Melrose \cite{MazzeoMelroseHyp} to analyze the resolvent of the
Laplacian on asymptotically hyperbolic spaces, are smooth vector fields that
vanish at $\pa X$, i.e.\ are of the form $\alpha V$, where $V$ is a
smooth vector field on $\oX$, and $\alpha$, as in our case, is a
boundary defining function, i.e.\ in local coordinates a linear
combination, with smooth coefficients, of
$\alpha\pa_\alpha$ and $\alpha\pa_{y_j}$. Further, 0-differential operators,
$A\in\Diff_0(\oX)$, are the differential operators generated by these
(taking finite sums of finite products, with $\CI(\oX)$
coefficients). As a contrast, b-vector fields are merely tangent to
$\pa X$, so in local coordinates they are linear combinations, with
smooth coefficients, of $\alpha\pa_\alpha$ and $\pa_{y_j}$,
and they generate b-differential operators $\Diffb(\oX)$. Often, as in our case, one is considering solutions of
0-differential equations with additional properties, such as having an
expansion in powers of $\alpha$ (and perhaps $\log\alpha$) with smooth
coefficients, i.e.\ polyhomogeneous functions. In these cases
$\alpha\Diffb(\oX)\subset\Diff_0(\oX)$ acts `trivially' on an
expansion in that it maps each term to one with an additional order of
vanishing, so in particular, one can analyze the asymptotic expansion of
solutions of
0-differential equations in this restrictive class by ignoring the
$\alpha\Diffb(\oX)$ terms. Notice that
$\alpha\pa_{y_j}\in\alpha\Diffb(\oX)$ in particular, so the tangential
0-derivatives can be dropped for this purpose. The indicial equation
is then obtained by freezing the coefficients of $A\in\Diff_0(\oX)$ at
$\pa X$, i.e.\ writing it as $\sum_{k,\beta} a_{k,\beta}(\alpha,y)
(\alpha\pa_\alpha)^k(\alpha \pa_y)^\beta$,
 where $a_{k,\beta}$ are bundle endomorphism valued, and restricting
 $\alpha$ to $0$, and dropping all terms with a positive power of
 $\alpha\pa_y$, to obtain $\sum_k
 a_{k,0}(0,y)(\alpha\pa_\alpha)^k$. This can be thought of as a regular-singular ODE
 in $\alpha$ for each $y$; its indicial roots are called the indicial roots
 of the original 0-operator, and they determine the asymptotics of
 solutions of the homogeneous PDE with this a priori form.

  Now $d_X\alpha\delta_X\alpha-\sigma^2\in\Diff_0^2(\oX)$ is a 0-differential operator which equals
  \[
    d_X\alpha\delta_X\alpha-\sigma^2 = \begin{pmatrix} -\sigma^2 & 0 \\ 0 & -\beta^{-2}\pa_\alpha\alpha\pa_\alpha\alpha - \sigma^2 \end{pmatrix}
  \]
  modulo $\alpha\Diffb^2(\oX)$; its indicial roots (i.e.\ the values of $\lambda$ for which $\alpha^{-\lambda}(d_X\alpha\delta_X\alpha-\sigma^2)\alpha^\lambda$, which is a matrix depending polynomially on $\lambda$, is not invertible) are $\pm i\beta\sigma-1$. In particular, $-i\beta\sigma+j$, $j\in\N_0$, is not an indicial root. Thus, a standard inductive argument starting with \eqref{EqImprovedOrder} shows that $v_T\in\CIdot(\oX;\Lambda X)$.

\textit{Step (3).}  Next, we note that $v_T$ lies in the kernel of the operator
  \begin{equation}
  \label{Eq0}
    d_X\alpha\delta_X\alpha+\alpha^2\delta_X d_X-\sigma^2\in\Diff_0^2(\oX;{}^0\Lambda\oX),
  \end{equation}
  which has the same principal part as $\alpha^2\Delta_X$ (computed with respect to the metric $h$), hence is principally a $0$-Laplacian; thus, we can apply Mazzeo's result \cite[Theorem~(13)]{MazzeoUniqueContinuation} on unique continuation at infinity for elliptic second order 0-differential operators such as~\eqref{Eq0} to conclude that the rapidly vanishing $v_T$ must in fact vanish identically.

  We thus have proved $d_X u_T=0$. Since $u_T$ satisfies \eqref{EqNonZeroEqn2}, we deduce that $u_T$ itself satisfies
  \[
    (d_X\alpha\delta_X\alpha-\sigma^2)u_T=0,
  \]
  thus repeating the above argument shows that this implies $u_T=0$, hence $u=0$, and the proof is complete.
\end{proof}

\subsection{Analysis of the zero resonance}
\label{SubsecZeroResonance}

We have shown now that the only potential resonance for $d+\delta$ in $\Im\sigma\geq 0$ is $\sigma=0$, and we proceed to study the zero resonance in detail, in particular giving a cohomological interpretation of it in Section~\ref{SubsecZeroResCohomology}.

We begin by establishing the order of the pole of $(\td(\sigma)+\tdel(\sigma))^{-1}$:

\begin{lemma}
\label{LemmaPoleOrder}
  $(\td(\sigma)+\tdel(\sigma))^{-1}$ has a pole of order $1$ at $\sigma=0$.
\end{lemma}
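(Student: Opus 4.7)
The plan is to establish both bounds on the pole order at $\sigma = 0$ of $P(\sigma)^{-1}$, where $P(\sigma) := \td(\sigma) + \tdel(\sigma)$.

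For the lower bound, the constant function $1 \in \CI(\wt X; \Lambda^0\wt X)$ lies in $\ker d \cap \ker\delta = \ker P(0)$ (via the identification~(2.4)). Since $P(\sigma)$ is a Fredholm analytic family of index $0$ by the end of Section~2, the nontriviality of $\ker P(0)$ forces $P(\sigma)^{-1}$ to have a genuine pole at $0$.

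For the upper bound, by the Gohberg--Sigal theory of Fredholm analytic operator pencils, the pole order at $\sigma = 0$ equals the maximum length of a Jordan chain, so it suffices to rule out length-$2$ chains: pairs $u_0 \neq 0$ in $\ker P(0)$ and $u_1 \in \CI(\wt X; \Lambda\wt X \oplus \Lambda\wt X)$ with $P(0) u_1 = -P'(0) u_0$, where a direct computation from the conjugation formulas of Section~2 gives $P'(0) = -i\bigl(dt_*\wedge - \iota_{(dt_*)^\sharp}\bigr)$, i.e., $-i$ times Clifford multiplication by $dt_*$. Such a chain corresponds physically to a $t_*$-linearly growing solution $U(t_*, x) = u_0(x) - it_* u_1(x)$ of $(d+\delta) U = 0$ on $\wt M$. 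To exclude it, I would exploit the formal $G$-self-adjointness of $P(0) = d + \delta$ with respect to the indefinite spacetime inner product $G$ from~(3.4): if such a chain existed, then for every $v \in \ker P(0)$,
\[
\la P'(0) u_0, v\ra_G = -\la P(0) u_1, v\ra_G = -\la u_1, P(0) v\ra_G = 0,
\]
so the bilinear form $(u_0, v) \mapsto \la P'(0) u_0, v\ra_G$ would be degenerate on $\ker P(0)$.

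The main obstacle is showing this form is in fact nondegenerate. The pointwise Clifford identity $c(dt_*)^2 = -G(dt_*, dt_*)\,I$, combined with timelikeness of $dt_*$ on $\wt X$, gives invertibility of $P'(0) = -ic(dt_*)$ as a pointwise bundle map; but nondegeneracy of the induced form on $\ker P(0)$ requires more. Concretely, for each nonzero $u_0 \in \ker P(0)$ in form degree $k$, one must produce $v \in \ker P(0)$ such that the pairing with $P'(0) u_0$ (which has components in degrees $k \pm 1$) is nonzero. I would use the duality between raising and lowering form degree on the closed manifold $\wt X$ together with the block sign structure $G_k = (-1)^k H_k \oplus (-1)^{k-1} H_{k-1}$, which pairs the raising ($dt_* \wedge u_0$) and lowering ($\iota_{(dt_*)^\sharp} u_0$) components of $P'(0) u_0$ with opposite signs, forcing nondegeneracy. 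A cleaner alternative uses the factorization $P(\sigma)^{-1} = P(\sigma)\,\wt\Box(\sigma)^{-1}$ from $P^2 = \wt\Box$: since $P(\sigma)$ is entire, the pole order of $P^{-1}$ is bounded by that of $\wt\Box^{-1}$, so it would suffice to argue a simple pole for $\wt\Box(\sigma)^{-1}$, which, being scalar-principal, should be amenable to the same style of argument at a potentially cleaner level.
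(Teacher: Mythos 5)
Your lower bound (constants give a pole, via the index-zero Fredholm property) is fine, but the core of your upper bound does not work. You reduce the absence of a length-two Jordan chain to nondegeneracy of the form $(u_0,v)\mapsto\la P'(0)u_0,v\ra_G$ on $\ker P(0)$, and that nondegeneracy is simply false in the cases the lemma is about: $P'(0)u_0=-ic(dt_*)u_0$ shifts form degree by $\pm 1$, and on Schwarzschild--de Sitter space the kernel of $\td(0)+\tdel(0)$ has trivial components in degrees $1$ and $n-1$ (Theorem~\ref{ThmSDSDDel}: $\cH^1=0$), so for $u_0=1$ the form $\la P'(0)u_0,v\ra_G$ vanishes for \emph{every} $v\in\ker P(0)$ --- yet the pole is simple. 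So degeneracy of your form carries no information about Jordan chains. The reason is that the genuine solvability obstruction for $P(0)u_1=-P'(0)u_0$ is non-orthogonality to the \emph{cokernel}, i.e.\ to $\ker(\td(0)+\tdel(0))^*$, and in this framework that space does not coincide with (or pair like) the kernel: the adjoint acts on supported distributions at the artificial Cauchy hypersurface, and the dual states are, e.g., delta distributions at the horizons in degrees $1$ and $n-1$ (Lemma~\ref{LemmaSDSDualStates}). Your appeal to ``formal $G$-self-adjointness'' to trade the cokernel for the kernel is exactly where this is lost: on the extended space the integration by parts produces boundary contributions at $\pa\wt X$ (or is broken by the complex absorption/Cauchy-hypersurface setup), and on $X$ the pairing against elements of $\CI_{(0)}$ with respect to $\alpha|dh|$ is not absolutely convergent (the $\alpha^{-1}$ components just fail to be $L^2$), which is precisely the boundary-term issue the paper spends Sections~\ref{SubsecAbsenceUpperHalfPlane}--\ref{SubsecAbsenceNonzeroReal} controlling.

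The fallback via $P(\sigma)^{-1}=P(\sigma)\wt\Box(\sigma)^{-1}$ begs the question: simplicity of the pole of $\wt\Box(\sigma)^{-1}$ at $0$ is a \emph{stronger} statement which the paper does not establish in this generality; it is verified only for Schwarzschild--de Sitter (and perturbations) much later, and the argument there uses the present lemma together with the explicit resonant and dual states and their orthogonality (the computation showing $A_{-1}^2=0$ after \eqref{EqDDelInvExpansion}). The paper's actual proof of this lemma is quite different and avoids any knowledge of dual states: assuming a pole of order $N\geq 2$, it applies the quantitative estimate of Proposition~\ref{PropAbsenceImPos}, $\|u\|\lesssim(|\sigma|/|\Im\sigma|^2)\|(\hd(\sigma)+\hdel(\sigma))u\|$, along $\sigma\in i(0,\infty)$, and combines it with the observation that the $L^2(\alpha|dh|)$-norm of the (non-$L^2$-at-$\sigma=0$) right-hand side degenerates only like $|\sigma|^{-1/2}$, coming from the $\alpha^{-i\beta\sigma-1}$ components; Fatou then forces $u(0)=0$, a contradiction. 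To salvage your Gohberg--Sigal route you would have to identify the dual states and compute the pairings $\la P'(0)u_0,\psi\ra$ honestly, which is essentially the harder work the paper defers to the explicit Schwarzschild--de Sitter computation.
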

\begin{proof}
  Since $\td(0)+\tdel(0)$ annihilates constant functions (which are indeed elements of $\CI_{(0)}$), $(\td(\sigma)+\tdel(\sigma))^{-1}$ does have a pole at $0$. Denote the order of the pole by $N$. Then there is a holomorphic family $\wt u(\sigma)\in\CI(\wt X)$ with $\wt u(0)\neq 0$ such that $(\td(\sigma)+\tdel(\sigma))\wt u(\sigma)=\sigma^N\wt v$, where $\wt v\in\CI(\wt X)$. Define $u(\sigma)=e^{iF\sigma}\wt u(\sigma)|_X\in\CI_{(\sigma)}$ and $v(\sigma)=e^{iF\sigma}\wt v|_X\in\CI_{(\sigma)}$, then $(\hd(0)+\hdel(0))u(\sigma)=\sigma^N v(\sigma)$. Moreover, since $(\td(0)+\tdel(0))\wt u(0)=0$ and $\wt u(0)$ is non-zero, Lemma~\ref{LemmaPoles} shows that $u(0)\neq 0$.

  Let us assume now that $N\geq 2$. For $\sigma\in i(0,\infty)$ close to $0$, the quantitative estimate in Proposition~\ref{PropAbsenceImPos} now gives
  \begin{equation}
  \label{EqPoleOrder}
    \|u(\sigma)\| \lesssim |\sigma|^{-1+N}\|v(\sigma)\|\leq|\sigma|\|v(\sigma)\|,
  \end{equation}
  where we use the norm of $L^2(\alpha|dh|;H\oplus H)$.\footnote{Observe that in the notation of Section~\ref{SubsecAbsenceUpperHalfPlane}, we have $\widehat{\delta_0}(0)=-\hdel(0)$, hence using the Riemannian fiber inner product $H\oplus H$ is natural when studying the zero resonance.} Notice that this does not immediately give $u(0)=0$ since $v(0)\notin L^2(\alpha|dh|;H\oplus H)$. However, we can quantify the degeneration of the $L^2$-norm of $v(\sigma)$ as $\sigma\to 0$. To see this, we first observe that the $L^2$-norm of $v(\sigma)$ restricted to the complement of any fixed neighborhood of $Y$ does stay bounded, so it remains to analyze the $L^2$-norms of the four components of $v(\sigma)$ near $Y$ in the notation of \eqref{EqBundleDecWarped}; denote these components by $\alpha^{-i\beta\sigma}\wt v_{TT}(\sigma),\alpha^{-i\beta\sigma-1}\wt v_{TN}(\sigma),\alpha^{-i\beta\sigma-1}\wt v_{NT}(\sigma)$ and $\alpha^{-i\beta\sigma}\wt v_{NN}(\sigma)$, so that the $\wt v_{\bullet\bullet}(\sigma)\in\CI(\Xeven;\Lambda Y)$ uniformly. Since the fiber metric in this basis has a block diagonal form and any $\CI(\Xeven)$-multiple of $\alpha^{-i\beta\sigma}$ is uniformly square-integrable with respect to the volume density $\alpha|dh|$, the degeneration of the $L^2$-norm of $v$ is caused by the $(TN)$ and $(NT)$ components. For these, we compute, with $\wt w(\sigma)\in\CI(\Xeven;\Lambda Y)$ denoting any continuous family supported near $Y$,
  \begin{align*}
    \int_Y \int &\alpha^{2(-i\beta\sigma-1)}\|\wt w\|^2_K\,\alpha\,d\alpha|dk| \\
     &= \|\wt w(0)\|^2_{L^2(Y,|dk|;K)} \int\alpha^{-2i\beta\sigma-1}\chi(\alpha)\,d\alpha + \cO(1),
  \end{align*}
  where $\chi\in\CI(\oX)$ is a cutoff, equal to $1$ near $\alpha=0$. We can rewrite the integral using an integration by parts, which yields
  \[
    \int\alpha^{-2i\beta\sigma-1}\chi(\alpha)\,d\alpha = \frac{1}{2i\beta\sigma}\int\alpha^{-2i\beta\sigma}\chi'(\alpha)\,d\alpha = \cO(|\sigma|^{-1}).
  \]
  Therefore, we obtain the bound $\|v(\sigma)\|=\cO(|\sigma|^{-1/2})$. Plugging this into \eqref{EqPoleOrder}, we conclude using Fatou's Lemma that $u(0)=0$, which contradicts our assumption that $u(0)\neq 0$. Hence, the order of the pole is $N\leq 1$, but since it is at least $1$, it must be equal to $1$.
\end{proof}

Next, we identify the resonant states. \emph{For brevity, we will write $\hd=\hd(0)$, $\hdel=\hdel(0)$ and $\wh\Box=\wh{\Box_g}(0)$.}
\begin{prop}
\label{PropResStates}
  $\ker_{\CI_{(0)}}(\hd+\hdel)$ is equal to the space
  \begin{equation}
  \label{EqZeroResonantStates}
    \cH = \{ u\in\CI_{(0)} \colon \hd u=0, \hdel u=0 \}.
  \end{equation}
\end{prop}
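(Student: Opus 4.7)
The inclusion $\cH\subseteq\ker_{\CI_{(0)}}(\hd+\hdel)$ is immediate. For the reverse, the key algebraic observation is that $\hd(0)^2=0=\hdel(0)^2$, which follows directly from the block-diagonal form of the operators in \eqref{EqDiffHat} at $\sigma=0$, together with $d_X^2=0=\delta_X^2$ and the identity $(\alpha^{-1}d_X\alpha)^2=\alpha^{-1}d_X^2\alpha=0$ (and similarly for $\delta_X$). Thus, if $u\in\CI_{(0)}$ satisfies $(\hd+\hdel)u=0$, then $\hd u=-\hdel u$, and $\wh\Box u=(\hd+\hdel)^2 u=(\hd\hdel+\hdel\hd)u=0$.

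The next step is a formal pairing using the volume density $\alpha|dh|$ and the Riemannian fiber inner product $H\oplus H$ on $\Lambda X\oplus\Lambda X$. Specializing the computation of $\widehat{\delta_\theta}(\sigma)$ from Section~\ref{SubsecAbsenceUpperHalfPlane} to $\theta=0$, $\sigma=0$ shows that $-\hdel(0)$ is the formal adjoint of $\hd(0)$ on compactly supported smooth sections. Substituting $\hd u=-\hdel u$, one would then like to conclude
\[
  \|\hd u\|_{H\oplus H}^2=\la\hd u,-\hdel u\ra_{H\oplus H}=-\la u,\hdel^2 u\ra_{H\oplus H}=0
\]
by integration by parts; this would force $\hd u=0$ and hence $\hdel u=-\hd u=0$, i.e., $u\in\cH$.

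The main obstacle is to justify this pairing, since elements of $\CI_{(0)}$ generically fail to be square integrable at $Y$ with respect to $\alpha|dh|$ and $H\oplus H$: the $\alpha^{-1}$ entries of the change-of-basis matrix $\sC$ from \eqref{EqDefCIsigma} render both $u$ and $\hd u$ (which again lies in $\CI_{(0)}$, as noted after \eqref{EqCodifferentialC}) only borderline integrable near $Y$. I would implement the pairing via a cutoff argument in the spirit of Proposition~\ref{PropAbsenceNonZero}. Fix $\chi\in\CI(\oX)$ depending only on $\alpha$ near $Y$, with $\chi=0$ for small $\alpha$ and $\chi=1$ for $\alpha$ bounded away from $0$, and set $\chi_\eps(\alpha)=\chi(\alpha/\eps)$. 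Then $\la\hd u,\chi_\eps\hd u\ra_{H\oplus H}$ is absolutely convergent, and integration by parts gives a main term $-\la u,\chi_\eps\hdel\hd u\ra=\la u,\chi_\eps\hdel^2 u\ra=0$ together with a commutator contribution $\la u,[\hdel,\chi_\eps]\hd u\ra$, whose integrand is proportional to $\chi'_\eps(\alpha)$ and hence localized in $\{\alpha\sim\eps\}$. Writing $u$ and $\hd u$ in the $\sC$-basis and using the explicit forms \eqref{EqDifferentialCinvC} and \eqref{EqCodifferentialCinvC} for $\sC^{-1}\hd(0)\sC$ and $\sC^{-1}\hdel(0)\sC$, one can identify which components pair singularly: pairings that involve at least one non-singular factor yield an $O(\eps)$ contribution (pairing a bounded factor against an $\alpha^{-1}$ factor over a region of measure $\eps$), while the genuinely borderline pairings reduce in the limit to a boundary integral over $Y$ by the method following \eqref{EqBdyPairing1}. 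This boundary integral vanishes thanks to the vanishing entries at $\alpha=0$ in \eqref{EqDifferentialCinvC} and \eqref{EqCodifferentialCinvC} (i.e., the same indicial cancellations that make $\hd, \hdel$ preserve $\CI_{(0)}$). Once this is established, $\lim_{\eps\to 0}\la\hd u,\chi_\eps\hd u\ra_{H\oplus H}=\|\hd u\|_{H\oplus H}^2=0$, completing the proof.
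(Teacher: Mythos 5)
Your reduction to $\wh\Box u=0$ and the plan of pairing against a cutoff $\chi_\eps$ and tracking the commutator is essentially the paper's strategy (the paper pairs $\chi_\eps(\hd\,\hdel+\hdel\,\hd)u$ with $u$, producing $\|\chi_\eps^{1/2}\hd u\|^2+\|\chi_\eps^{1/2}\hdel u\|^2$ plus two commutator terms, but the mechanics are the same). The gap is in your final step: the limiting boundary integral over $Y$ does \emph{not} vanish ``by the same indicial cancellations that make $\hd,\hdel$ preserve $\CI_{(0)}$.'' Computing $[\hdel\sC,\chi_\eps]$ explicitly (see \eqref{EqCodiffCommutator}), the surviving contributions are genuinely nonzero bilinear expressions in the boundary data, e.g.\ $2\beta^{-2}\la\wt u_{TN}|_Y,\wt u_{NT}|_Y\ra$, $-\beta\la(\alpha^{-1}\pa_\alpha^*\wt u_{NT})|_Y,\wt u_{NT}|_Y\ra$ and $-\la\delta_Y\wt u_{TT}|_Y,\wt u_{NT}|_Y\ra$; in your version the limit of $\la u,[\hdel,\chi_\eps]\hd u\ra$ likewise produces terms such as $\la\wt u_{NT}|_Y,(-\alpha^{-1}\pa_\alpha\wt u_{NT}+d_Y\wt u_{NN})|_Y\ra$. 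None of these is zero for a general element of $\CI_{(0)}$. There is also a bookkeeping slip: pairing a bounded factor against an $\alpha^{-1}$ factor carrying the weight $\eps^{-1}\chi'_\eps$ against the density $\alpha\,d\alpha$ gives $\int\eps^{-1}\chi'_\eps\,d\alpha=O(1)$, not $O(\eps)$ --- these are exactly the terms that survive as boundary integrals.

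To kill the boundary terms one needs two further inputs, both present in the paper's proof and absent from yours. First, the boundedness of the regularized pairings together with Fatou's lemma shows $\hd u,\hdel u\in L^2(\alpha|dh|)$, which forces $d_Y\wt u_{NT}|_Y=0$ and $\delta_Y\wt u_{NT}|_Y=0$ (equation \eqref{EqUNTHarmonic}); this already disposes of some of the terms. Second, one must use the restriction to $Y$ of the first and fourth components of the full first-order equation $(\hd+\hdel)\sC\wt u=0$ --- not merely the consequence $\wh\Box u=0$ --- paired with $\wt u_{NT}|_Y$ in $L^2(Y)$ and integrated by parts on $Y$, to show the remaining pairings cancel. (In your setup, the fourth component gives $(-\alpha^{-1}\pa_\alpha\wt u_{NT}+d_Y\wt u_{NN})|_Y=\delta_Y\wt u_{NN}|_Y$, whose pairing with $\wt u_{NT}|_Y$ vanishes only after invoking $d_Y\wt u_{NT}|_Y=0$.) This use of the first-order equation at the boundary is the heart of the proof and is missing from your argument.
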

\begin{proof}
  Given $u\in\CI_{(0)}$ with $(\hd+\hdel)u=0$, we conclude that $\wh\Box u=0$. We observe now that $\wh\Box$ is symmetric on $L^2(\alpha|dh|;H\oplus H)$: indeed, $\hd(\sigma)$ and $\hdel(\sigma)$ are block diagonal for $\sigma=0$, see~\eqref{EqDiffHat}, hence are adjoints of one another with respect to $\pm H\oplus\pm H$ for any choice of signs, with opposite signs giving the natural inner product~\eqref{EqSpacetimeFiberMetric}, and both signs positive giving the Riemannian fiber metric $H\oplus H$. Thus, we can obtain information about $u$ by a boundary pairing type argument: concretely, for a cutoff $\chi\in\CI(\oX)$ as in the proof of Proposition~\ref{PropAbsenceNonZero}, identically $0$ near $Y$, identically $1$ outside a neighborhood of $Y$ and a function of $\alpha$ in a collar neighborhood of $Y$, and with $\chi_\eps(\alpha)=\chi(\alpha/\eps)$, $\chi'_\eps(\alpha)=\chi'(\alpha/\eps)$, we have\footnote{The minus sign disappears after the second equality sign due to the change of signs in the used inner product, cf.\ the discussion around~\eqref{EqSpacetimeDel}.}
  \begin{align}
    0&=-\lim_{\eps\to 0}\la\chi_\eps(\hd\,\hdel+\hdel\,\hd)u,u\ra = \lim_{\eps\to 0}(\la\hdel u,\hdel\chi_\eps u\ra+\la\hd u,\hd\chi_\eps u\ra) \nonumber\\
   \label{EqZeroResComm} 
	 &=\lim_{\eps\to 0}(\|\chi_\eps^{1/2}\hdel u\|^2+\|\chi_\eps^{1/2}\hd u\|^2) + \lim_{\eps\to 0}(\la\hdel u,[\hdel,\chi_\eps]u\ra + \la\hd u,[\hd,\chi_\eps]u\ra).
  \end{align}
  Since the commutators are supported near $Y$, we can compute them in the basis \eqref{EqBundleDecWarped}. Let us write $u=\sC\wt u$ as in \eqref{EqResState} with $\sigma=0$, then in view of \eqref{EqCodifferentialC}, we have
  \begin{equation}
  \label{EqCodiffCommutator}
    [\hdel\sC,\chi_\eps]
	  = \eps^{-1}\chi'_\eps
	    \begin{pmatrix}
	      0 & \beta^{-2}\alpha + \cO(\alpha^3) & \beta^{-1}\alpha^{-1}+\cO(\alpha) & 0 \\
		  0 & 0 & 0 & 0 \\
		  0 & 0 & 0 & -\beta^{-2} + \cO(\alpha^2) \\
		  0 & 0 & 0 & 0
	    \end{pmatrix},
  \end{equation}
  and since therefore only the $(TT)$ and $(NT)$ components of $[\hdel\sC,\chi_\eps]\wt u$ are non-zero, we merely compute
  \begin{align*}
    (\hdel\sC\wt u)_{TT} &= -\delta_Y\wt u_{TT} - \alpha^{-1}\pa_\alpha^*\alpha^2\wt u_{TN} - \beta\alpha^{-1}\pa_\alpha^*\wt u_{NT} \\
	   &\quad\in -\delta_Y\wt u_{TT} + 2\beta^{-2}\wt u_{TN} - \beta\alpha^{-1}\pa_\alpha^*\wt u_{NT} + \alpha^2\CI(\Xeven;\Lambda Y), \\
	(\hdel\sC\wt u)_{NT} &= \alpha^{-1}\delta_Y\wt u_{NT} + \pa_\alpha^*\wt u_{NN} \in \alpha^{-1}\delta_Y\wt u_{NT} + \alpha\,\CI(\Xeven;\Lambda Y).
  \end{align*}
  Notice here that $\alpha^{-1}\pa_\alpha=2\pa_\mu$ indeed preserves elements of $\CI(\Xeven;\Lambda Y)$. Now in \eqref{EqZeroResComm}, the pairing corresponding to the $(1,2)$-component of \eqref{EqCodiffCommutator} is of the form \eqref{EqBdyPairingVanishingTerm} (recall that the volume density is $\alpha|dh|=\alpha\wt\beta\,d\alpha|dk|$) and hence vanishes in the limit $\eps\to 0$, and we conclude that
  \begin{equation}
  \label{EqCodiffBdyPairings}
  \begin{split}
    \lim_{\eps\to 0}\la\hdel u,[\hdel,\chi_\eps]u\ra &= -\la\delta_Y\wt u_{TT}|_Y,\wt u_{NT}|_Y\ra + 2\beta^{-2}\la\wt u_{TN}|_Y,\wt u_{NT}|_Y\ra \\
	  &\quad - \beta\la(\alpha^{-1}\pa_\alpha^*\wt u_{NT})|_Y,\wt u_{NT}|_Y\ra - \beta^{-1}\la\delta_Y\wt u_{NT}|_Y,\wt u_{NN}|_Y\ra,
  \end{split}
  \end{equation}
  where we use the $L^2(Y,|dk|;K)$ inner product on the right hand side;\footnote{Recall here that $H\oplus H=K\oplus\wt\beta^{-2}K\oplus K\oplus\wt\beta^{-2}K$, so the $(TT)$ and $(NT)$ components which we are concerned with here do not come with an extra factor of $\wt\beta^{-2}$.} we absorbed the factor $\wt\beta|_Y=\beta$ from the volume density $\alpha\wt\beta\,d\alpha|dk|$ into the functions in the pairings.

  In a similar vein, we can use \eqref{EqDifferentialC} to compute
  \begin{equation}
  \label{EqDiffCommutator}
    [\hd\sC,\chi_\eps]
	  = \eps^{-1}\chi'_\eps
	    \begin{pmatrix}
		  0 & 0 & 0 & 0 \\
		  1 & 0 & 0 & 0 \\
		  0 & 0 & 0 & 0 \\
		  0 & 0 & -\alpha^{-1} & 0
		\end{pmatrix}
  \end{equation}
  and
  \begin{align*}
    (\hd\sC\wt u)_{TN} &= \pa_\alpha\wt u_{TT} - \alpha d_Y\wt u_{TN} - \alpha^{-1}d_Y\beta\wt u_{NT} \\
	  &\quad\in -\beta\alpha^{-1}d_Y\wt u_{NT} + \CI(\Xeven;\Lambda Y), \\
	(\hd\sC\wt u)_{NN} &= -\alpha^{-1}\pa_\alpha\wt u_{NT} + d_Y\wt u_{NN}.
  \end{align*}
  Correspondingly,
  \begin{equation}
  \label{EqDiffBdyPairings}
  \begin{split}
    \lim_{\eps\to 0}\la\hd u,[\hd,\chi_\eps]u\ra &= -\la d_Y\wt u_{NT}|_Y,\wt u_{TT}|_Y\ra + \beta^{-1}\la(\alpha^{-1}\pa_\alpha\wt u_{NT})|_Y,\wt u_{NT}|_Y\ra \\
	  &\quad - \beta^{-1}\la d_Y\wt u_{NN}|_Y,\wt u_{NT}|_Y\ra,
  \end{split}
  \end{equation}
  where we again use the $L^2(Y,|dk|;K)$ inner product on the right hand side; notice with regard to the powers of $\beta$ that on the $(TN)$ and $(NN)$ components, the fiber inner product is $\beta^{-2}K$.

  As a consequence of these computations, we conclude that the pairings in \eqref{EqZeroResComm} stay bounded as $\eps\to 0$, hence $\hd u,\hdel u\in L^2(\alpha|dh|;H\oplus H)$ by Fatou's Lemma. Looking at the most singular terms of $\hd\sC\wt u$ and $\hdel\sC\wt u$ (again using \eqref{EqDifferentialC} and \eqref{EqCodifferentialC}), this necessitates
  \begin{equation}
  \label{EqUNTHarmonic}
    d_Y\wt u_{NT}|_Y=0,\quad \delta_Y\wt u_{NT}|_Y=0.
  \end{equation}
  Therefore, taking \eqref{EqCodiffBdyPairings} and \eqref{EqDiffBdyPairings} into account, \eqref{EqZeroResComm} simplifies to
  \begin{equation}
  \label{EqZeroResCommFull}
  \begin{split}
    0 = \|\hdel u\|^2+&\|\hd u\|^2 + \beta^{-1}\la(\alpha^{-1}\pa_\alpha\wt u_{NT})|_Y,\wt u_{NT}|_Y\ra \\
	  &- \beta\la(\alpha^{-1}\pa_\alpha^*\wt u_{NT})|_Y,\wt u_{NT}|_Y\ra + 2\beta^{-2}\la\wt u_{TN}|_Y,\wt u_{NT}|_Y\ra.
  \end{split}
  \end{equation}
  Moreover, the fourth component of the equation $(\hd+\hdel)\sC\wt u=0$ yields
  \[
    -(\alpha^{-1}\pa_\alpha\wt u_{NT})|_Y+d_Y\wt u_{NN}|_Y-\delta_Y\wt u_{NN}|_Y = 0,
  \]
  which we can pair with $\wt u_{NT}|_Y$ relative to $L^2(Y,|dk|;K)$, and then an integration by parts together with \eqref{EqUNTHarmonic} shows that the first boundary pairing in \eqref{EqZeroResCommFull} vanishes. Likewise, the first component of $(\hd+\hdel)\sC\wt u=0$ gives
  \[
    d_Y\wt u_{TT}|_Y-\delta_Y\wt u_{TT}|_Y+2\beta^{-2}\wt u_{TN}|_Y - \beta(\alpha^{-1}\pa_\alpha^*\wt u_{NT})|_Y = 0,
  \]
  which we can again pair with $\wt u_{NT}|_Y$, and in view of \eqref{EqUNTHarmonic}, we conclude that the second line of \eqref{EqZeroResCommFull} vanishes as well. Thus, finally, \eqref{EqZeroResCommFull} implies that $\hd u=0$ and $\hdel u=0$.

  Conversely, every $u\in\CI_{(0)}$ satisfying $\hd u=0$ and $\hdel u=0$ trivially lies in the kernel of $\hd+\hdel$.
\end{proof}

The above proof in particular shows:
\begin{cor}
\label{CorBoxL2Kernel}
  Let $u=\sC\wt u\in\CI_{(0)}$ be such that $\hd\,\hdel u=0$ (resp.\ $\hdel\,\hd u=0$), and assume that $\wt u_{NT}|_Y=0$.\footnote{The latter is equivalent to assuming $u\in L^2(\alpha|dh|)$.} Then $\hdel u=0$ (resp.\ $\hd u=0$). In particular, $\ker_{\CI_{(0)}\cap L^2}\wh\Box = \cH\cap L^2$.
\end{cor}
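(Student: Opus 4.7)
The plan is to reuse, in a slightly simpler form, the boundary pairing argument from the proof of Proposition~\ref{PropResStates}. The key observation driving everything is that the hypothesis $\wt u_{NT}|_Y=0$ is precisely what annihilates \emph{every} summand in the commutator boundary pairings \eqref{EqCodiffBdyPairings} and \eqref{EqDiffBdyPairings}: inspection shows that each term there pairs a factor carrying $\wt u_{NT}|_Y$ (directly, or via $d_Y\wt u_{NT}|_Y$, $\delta_Y\wt u_{NT}|_Y$, or $(\alpha^{-1}\pa_\alpha\wt u_{NT})|_Y$) against a smooth boundary value, so each such pairing vanishes under the hypothesis.

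For the case $\hd\,\hdel u=0$, with the cutoff $\chi_\eps$ from Proposition~\ref{PropResStates}, I would integrate by parts in $L^2(\alpha|dh|;H\oplus H)$ to obtain
\begin{equation*}
  0 = \la \hd\,\hdel u, \chi_\eps u\ra = \|\chi_\eps^{1/2}\hdel u\|^2 + \la \hdel u,[\hdel,\chi_\eps]u\ra.
\end{equation*}
Letting $\eps\to 0$, the commutator pairing converges to the right-hand side of \eqref{EqCodiffBdyPairings}, which vanishes under the hypothesis by the observation above. As in the proof of Proposition~\ref{PropResStates}, Fatou's lemma then yields $\hdel u\in L^2$ with $\|\hdel u\|^2=0$, hence $\hdel u=0$. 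The case $\hdel\,\hd u=0$ is symmetric and uses \eqref{EqDiffBdyPairings} in place of \eqref{EqCodiffBdyPairings}, yielding $\hd u=0$.

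For the ``in particular'' statement, inspection of the matrix $\sC$ shows that the only components of $u=\sC\wt u$ that can fail to lie in $L^2(\alpha|dh|;H\oplus H)$ are those carrying the prefactor $\alpha^{-1}$, all of which arise from $\wt u_{NT}$; thus $u\in L^2$ if and only if $\wt u_{NT}|_Y=0$, as the footnote claims. Given such a $u$ with $\wh\Box u=(\hd\,\hdel+\hdel\,\hd)u=0$, I would rerun the full pairing \eqref{EqZeroResComm}: both commutator contributions vanish by the observation above, leaving $\|\hd u\|^2+\|\hdel u\|^2=0$, so $u\in\cH\cap L^2$. The reverse inclusion is immediate since $\hd u=\hdel u=0$ implies $\wh\Box u=0$. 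The main, and essentially only, task is the bookkeeping needed to verify the overarching observation by inspection of \eqref{EqCodiffBdyPairings} and \eqref{EqDiffBdyPairings}; no new computation is required beyond what Proposition~\ref{PropResStates} already provides.
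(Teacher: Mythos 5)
Your proposal is correct and follows the paper's own argument: the paper likewise pairs $\hd\,\hdel u$ (resp.\ $\hdel\,\hd u$) against $\chi_\eps u$, observes that every term in the boundary pairings \eqref{EqCodiffBdyPairings} and \eqref{EqDiffBdyPairings} carries a factor of $\wt u_{NT}|_Y$ (or $d_Y\wt u_{NT}|_Y$, $\delta_Y\wt u_{NT}|_Y$) and hence vanishes under the hypothesis, and concludes $\hdel u=0$ (resp.\ $\hd u=0$); the ``in particular'' statement then follows exactly as you describe.
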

\begin{proof}
  Suppose $\hd\,\hdel u=0$. With a cutoff function $\chi_\eps$ as above, we obtain
  \[
    0=-\lim_{\eps\to 0}\la\chi_\eps\hd\,\hdel u,u\ra = \lim_{\eps\to 0}\|\chi_\eps^{1/2}\hdel u\|^2 + \lim_{\eps\to 0}\la\hdel u,[\hdel,\chi_\eps]u\ra.
  \]
  In view of \eqref{EqCodiffBdyPairings} and $\wt u_{NT}|_Y=0$, the second term on the right hand side vanishes, and we deduce $\hdel u=0$. The proof that $\hdel\,\hd u=0$ implies $\hd u=0$ is similar and uses \eqref{EqDiffBdyPairings}.
\end{proof}

\begin{cor}
\label{CorKernelBox}
  We have $\ker\wh\Box=\ker\hd\,\hdel\cap\ker\hdel\,\hd$.
\end{cor}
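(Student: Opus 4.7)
The plan is to reduce this to Proposition~\ref{PropResStates} (the characterization $\ker_{\CI_{(0)}}(\hd+\hdel) = \cH = \ker\hd\cap\ker\hdel$). The inclusion $\ker\hd\,\hdel\cap\ker\hdel\,\hd \subset \ker\wh\Box$ is trivial, since $\wh\Box = (\hd+\hdel)^2 = \hd\,\hdel + \hdel\,\hd$ (using $\hd^2 = \hdel^2 = 0$, which follows from $d^2=0$, $\delta^2=0$ and conjugation by $e^{it\sigma}$ at $\sigma=0$).

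For the non-trivial inclusion, suppose $u \in \CI_{(0)}$ satisfies $\wh\Box u = 0$, and set $v := (\hd+\hdel)u$. Recall from the discussion in the excerpt (just before Section~\ref{SubsecAbsenceUpperHalfPlane}) that $\hd$ and $\hdel$ both preserve the space $\CI_{(0)}$, so $v \in \CI_{(0)}$. Moreover, $(\hd+\hdel)v = (\hd+\hdel)^2 u = \wh\Box u = 0$, so $v \in \ker_{\CI_{(0)}}(\hd+\hdel)$.

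By Proposition~\ref{PropResStates}, $v \in \cH$, meaning that $\hd v = 0$ and $\hdel v = 0$. Expanding and using $\hd^2 = 0$ yields
\[
  0 = \hd v = \hd\,\hd u + \hd\,\hdel u = \hd\,\hdel u,
\]
and similarly $\hdel v = \hdel\,\hd u + \hdel\,\hdel u = \hdel\,\hd u = 0$. Hence $u \in \ker\hd\,\hdel \cap \ker\hdel\,\hd$, completing the argument.

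There is no real obstacle here: the entire content of the statement is packaged into Proposition~\ref{PropResStates} together with the fact that $\hd$ and $\hdel$ preserve $\CI_{(0)}$, and the algebraic identities $\hd^2 = \hdel^2 = 0$ make the expansion of $\hd v$ and $\hdel v$ immediate.
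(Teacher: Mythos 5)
Your proof is correct and is essentially the paper's own argument: both set $v=(\hd+\hdel)u$, observe that $\wh\Box u=0$ forces $v\in\ker(\hd+\hdel)=\cH$ by Proposition~\ref{PropResStates}, and then expand $\hd v=0$, $\hdel v=0$ using $\hd^2=\hdel^2=0$. The only difference is that you also spell out the trivial reverse inclusion, which the paper leaves implicit.
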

\begin{proof}
  If $u\in\ker\wh\Box$, then $(\hd+\hdel)u\in\cH$, thus $\hdel(\hd+\hdel)u=\hdel\,\hd u=0$ and $\hd\,\hdel u=0$.
\end{proof}

We record another setting in which the boundary terms in the proof of Proposition~\ref{PropResStates} vanish:
\begin{lemma}
\label{LemmaNoBdyTerms}
  Suppose $v\in\CI_{(0)}$ is a solution of $\hdel\,\hd\,\hdel v=0$. Then $\hd\,\hdel v=0$. Likewise, if $v\in\CI_{(0)}$ is a solution of $\hd\,\hdel\,\hd v=0$, then $\hdel\,\hd v=0$.
\end{lemma}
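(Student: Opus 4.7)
My plan for the first statement is to set $u = \hd\hdel v$ and show $u = 0$ in three steps: identify $u$ as an element of the zero resonance space $\cH$ from Proposition~\ref{PropResStates}, verify $\wt u_{NT}|_Y = 0$ by combining this with a direct computation and Hodge theory on $Y$, and then run a boundary pairing argument analogous to the one in the proof of Proposition~\ref{PropResStates} to conclude $\|u\|^2 = 0$. The second statement will follow by the symmetric argument with $u = \hdel\hd v$.

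Step one is immediate: $\hd^2 = 0$ gives $\hd u = 0$, and the hypothesis gives $\hdel u = \hdel\hd\hdel v = 0$, so $u\in\cH$ by Proposition~\ref{PropResStates}; the argument leading to \eqref{EqUNTHarmonic} in that proof shows that the boundary restriction $\wt u_{NT}|_Y$ is harmonic on the closed compact manifold $Y$. On the other hand, factoring $\sC^{-1}\hd\hdel\sC = (\sC^{-1}\hd\sC)(\sC^{-1}\hdel\sC)$ and using \eqref{EqDifferentialCinvC}--\eqref{EqCodifferentialCinvC}, I would compute
\[
\wt u_{NT} = -d_Y\bigl(\delta_Y\wt v_{NT} + \alpha\pa_\alpha^*\wt v_{NN}\bigr),
\]
and since $\alpha\pa_\alpha^*$ applied to any smooth function of $(\mu,y)$ vanishes at $\mu = 0$, one has $\wt u_{NT}|_Y = -d_Y\delta_Y\wt v_{NT}|_Y$, which is $d_Y$-exact on $Y$. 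Hodge theory on the closed compact manifold $Y$ forces any form that is both $d_Y$-exact and $\delta_Y$-coclosed to vanish, hence $\wt u_{NT}|_Y = 0$. Consequently $u \in L^2(\alpha|dh|; H\oplus H)$ by Corollary~\ref{CorBoxL2Kernel}, and $d_Y\delta_Y\wt v_{NT}|_Y = 0$; pairing the latter against $\wt v_{NT}|_Y$ on $Y$ yields $\|\delta_Y\wt v_{NT}|_Y\|_{L^2(Y;K)}^2 = 0$, so $\delta_Y\wt v_{NT}|_Y = 0$.

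The final step uses that $\hdel = -\hd^*$ in $L^2(X,\alpha|dh|;H\oplus H)$ (a direct computation from the block forms of $\hd(0)$ and $\hdel(0)$ in Section~\ref{SubsecZeroResonance}) together with the compact support of $\chi_\eps u$ in $X$ to justify, for each fixed $\eps > 0$,
\[
\langle\chi_\eps u,\hd\hdel v\rangle = -\langle\hdel(\chi_\eps u),\hdel v\rangle = -\langle[\hdel,\chi_\eps]u,\hdel v\rangle,
\]
where the last equality uses $\hdel u = 0$. As $\eps\to 0$, the left-hand side converges to $\|u\|^2$ by dominated convergence. For the right-hand side, inspection of \eqref{EqCodiffCommutator} shows that the $(TT)$-contribution is $O(\alpha)\cdot\eps^{-1}\chi'_\eps$ (using $\wt u_{NT}|_Y = 0$ to obtain $\wt u_{NT} = O(\mu)$) and so vanishes in the limit, while the $(NT)$-contribution pairs against $(\hdel v)_{NT} = \alpha^{-1}\delta_Y\wt v_{NT}|_Y + O(\alpha)$ and reduces, in the limit, to a nonzero multiple of $\langle\wt u_{NN}|_Y, \delta_Y\wt v_{NT}|_Y\rangle_{L^2(Y;K)}$, which vanishes by the previous step. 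Thus $\|u\|^2 = 0$ and $u = \hd\hdel v = 0$, as desired.

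For the second statement, the analogous argument with $u = \hdel\hd v$ gives $\wt u_{NT}|_Y = -\delta_Y d_Y\wt v_{NT}|_Y$, which is $\delta_Y$-coexact; the dual Hodge-theoretic fact that $\delta_Y$-coexact $\cap$ $d_Y$-closed $= 0$ on $Y$ forces $\wt u_{NT}|_Y = 0$ and then $d_Y\wt v_{NT}|_Y = 0$, after which the symmetric boundary pairing using $[\hd,\chi_\eps]u$ in place of $[\hdel,\chi_\eps]u$ gives $u = 0$. The main obstacle is the careful bookkeeping of $\alpha$-orders in the boundary pairing step, where one must verify that only the term proportional to $\delta_Y\wt v_{NT}|_Y$ (respectively $d_Y\wt v_{NT}|_Y$) survives in the limit, which in turn crucially depends on the vanishing $\wt u_{NT}|_Y = 0$ established in the second step.
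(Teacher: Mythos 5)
Your argument is correct, and it rests on the same toolkit as the paper's proof -- the explicit forms \eqref{EqDifferentialCinvC}--\eqref{EqCodifferentialCinvC} near $Y$, Hodge theory on the closed manifold $Y$, and an $\eps$-regularized commutator pairing -- but it is organized differently. The paper works with $w=\hdel v$ rather than with $u=\hd\,\hdel v$: from the hypothesis $\hdel\,\hd w=0$ and the boundary-pairing argument in the proof of Proposition~\ref{PropResStates} one gets $\hd w\in L^2$, hence $d_Y\wt w_{NT}|_Y=0$; since $\wt w_{NT}|_Y=\delta_Y\wt v_{NT}|_Y$ by \eqref{EqCodifferentialCinvC}, the same pairing on $Y$ that you use gives $\delta_Y\wt v_{NT}|_Y=0$, i.e.\ $\wt w_{NT}|_Y=0$, and then Corollary~\ref{CorBoxL2Kernel} applied to $w$ (whose hypothesis $\hdel\,\hd w=0$ is exactly the assumption of the lemma) yields $\hd w=\hd\,\hdel v=0$ at once. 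You instead obtain the key vanishing from the observation that $u=\hd\,\hdel v\in\cH$ forces the boundary component $\wt u_{NT}|_Y$ to be harmonic on $Y$, while the factorization through \eqref{EqCodifferentialCinvC} shows it equals $-d_Y\delta_Y\wt v_{NT}|_Y$ and is therefore exact, whence it vanishes; you then finish with your own pairing of $\chi_\eps u$ against $\hd\,\hdel v$. That last step is sound -- your bookkeeping of the $\alpha$-orders in \eqref{EqCodiffCommutator}, using $\wt u_{NT}=O(\mu)$, and the identification of the surviving boundary term as a multiple of $\la\wt u_{NN}|_Y,\delta_Y\wt v_{NT}|_Y\ra$ are exactly what is needed -- but it can be skipped: once $\delta_Y\wt v_{NT}|_Y=0$ you already know $\wt w_{NT}|_Y=0$, and Corollary~\ref{CorBoxL2Kernel} closes the argument; the same remark applies to your second half. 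One cosmetic point: the equivalence of $\wt u_{NT}|_Y=0$ with $u\in L^2(\alpha|dh|)$ is the content of the footnote to Corollary~\ref{CorBoxL2Kernel}, not of the corollary itself, so the citation there should be adjusted.
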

\begin{proof}
  Write $w=\hdel v\in\CI_{(0)}$. Then $\hdel\,\hd w=0$ implies, by the proof of Proposition~\ref{PropResStates}, that $\hd w\in L^2(\alpha|dh|;H\oplus H)$. Writing $w=\sC\wt w$, this in particular implies $d_Y\wt w_{NT}|_Y=0$; but writing $v=\sC\wt v$, we have
  \[
    \wt w_{NT}=(\sC^{-1}\hdel\sC\wt v)_{NT} = \delta_Y\wt v_{NT} + \alpha\pa_\alpha^*\wt v_{NN},
  \]
  as follows from \eqref{EqCodifferentialCinvC}. Restricting to $Y$, we thus have $\wt w_{NT}|_Y=\delta_Y\wt v_{NT}|_Y$, and hence $0=d_Y\delta_Y\wt v_{NT}|_Y$. We pair this in $L^2(Y,|dk|;K)$ with $\wt v_{NT}$ and integrate by parts, obtaining $\delta_Y\wt v_{NT}|_Y=0$. But this implies that $\wt w_{NT}|_Y=0$. By Corollary~\ref{CorBoxL2Kernel}, this gives $\hd w=\hd\,\hdel v=0$.

  For the second part, we proceed analogously: letting $w=\hd v\in\CI_{(0)}$, we have $\hd\,\hdel w=0$, thus $\hdel w\in L^2$. This gives $\delta_Y\wt w_{NT}|_Y=0$; but by \eqref{EqDifferentialCinvC}, $\wt w_{NT}|_Y=-d_Y\wt v_{NT}|_Y$, therefore $\delta_Y\wt w_{NT}|_Y=0$ implies $d_Y\wt v_{NT}|_Y=0$, so $\wt w_{NT}|_Y=0$, which in turn gives $\hdel w=0$ by Corollary~\ref{CorBoxL2Kernel}, hence $\hdel\,\hd v=0$.
\end{proof}

\subsection{Cohomological interpretation of zero resonant states}
\label{SubsecZeroResCohomology}

\emph{In this section, we will always work with $\sigma=0$ and hence simply write $\hd=\hd(0)$, $\hdel=\hdel(0)$, $\td=\td(0)$, $\tdel=\tdel(0)$, $\wh\Box=\wh\Box(0)$ and $\wt\Box=\wt\Box(0)$.}

The space $\cH$ defined in Proposition~\ref{PropResStates} is graded by the form degree, i.e.
\begin{equation}
\label{EqDplusDelKernel}
  \cH=\bigoplus_{k=0}^n\cH^k,
\end{equation}
where $\cH^k$ is the space of all $u\in\cH$ of pure form degree $k$. In the decomposition \eqref{EqFormDecomposition}, this means that $u_T$ is a differential $k$-form on $X$, and $u_N$ is a differential $(k-1)$-form. Likewise, $\cK:=\ker\wh\Box$ is graded by form degree, and we write
\begin{equation}
\label{EqBoxKernel}
  \ker_{\CI_{(0)}}\wh\Box=\bigoplus_{k=0}^n\cK^k.
\end{equation}
We aim to relate the spaces $\cH^k$ and $\cK^k$ to certain cohomology groups associated with $\oX$. As in the Riemannian setting, the central tool is a Hodge type decomposition adapted to $\hd$ and $\hdel$: 

\begin{lemma}
\label{LemmaHodgeDecomp}
  The following Hodge type decomposition holds on $X$:
  \begin{equation}
  \label{EqHodge}
    \CI_{(0)} = \ker_{\CI_{(0)}}\wh\Box \oplus \ran_{\CI_{(0)}}\wh\Box.
  \end{equation}
\end{lemma}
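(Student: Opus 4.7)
My plan is to establish the decomposition \eqref{EqHodge} by proving two separate statements: the trivial intersection $\ker_{\CI_{(0)}}\wh\Box \cap \ran_{\CI_{(0)}}\wh\Box = \{0\}$, and the sum $\CI_{(0)} = \ker_{\CI_{(0)}}\wh\Box + \ran_{\CI_{(0)}}\wh\Box$; together these yield the direct sum.

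For the intersection, I would argue as follows. Suppose $u = \wh\Box v$ with $v \in \CI_{(0)}$ and $u \in \cK$. Corollary~\ref{CorKernelBox} gives $\hd\hdel u = 0$ and $\hdel\hd u = 0$; expanding $u = \hd\hdel v + \hdel\hd v$ and using $\hd^2 = \hdel^2 = 0$, these simplify to
\[
  \hd\hdel\hd\hdel v = 0 \quad\text{and}\quad \hdel\hd\hdel\hd v = 0.
\]
The plan now is to invoke Lemma~\ref{LemmaNoBdyTerms} twice in succession. Reading the first equation as $\hd\hdel\hd(\hdel v) = 0$, the second half of Lemma~\ref{LemmaNoBdyTerms} applied with $v$ there replaced by $\hdel v$ yields $\hdel\hd\hdel v = 0$; the first half of the same lemma, applied to $v$ itself, then produces $\hd\hdel v = 0$. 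A parallel chain of deductions starting from $\hdel\hd\hdel(\hd v) = 0$ gives $\hdel\hd v = 0$. Summing, $u = \hd\hdel v + \hdel\hd v = 0$.

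For the sum, I plan to use the Fredholm structure inherited from the extended problem. The operator $\wt\Box(0)$ on $\wt X$ is Fredholm of index zero on the Sobolev spaces described after \eqref{EqMellinTransformRelation}, so $\dim\ker\wt\Box(0) = \codim\ran\wt\Box(0)$, both finite. The correspondence from Lemma~\ref{LemmaPolesBox}---essentially smooth extension from $\Xeven$ to $\wt X$ combined with unique continuation at infinity---transfers this information to $\wh\Box$ on $\CI_{(0)}$, in particular ensuring that $\ran_{\CI_{(0)}}\wh\Box$ has finite codimension equal to $\dim\cK$. Combined with the intersection step, the composition $\cK\hookrightarrow\CI_{(0)}\twoheadrightarrow\CI_{(0)}/\ran_{\CI_{(0)}}\wh\Box$ is an injection between finite-dimensional spaces of equal dimension, hence bijective, which yields $\CI_{(0)} = \cK + \ran_{\CI_{(0)}}\wh\Box$.

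The main obstacle I anticipate is the intersection step. Naively one might expect it to fail, since $\cK$ may be strictly larger than $\cH$ (as suggested by Theorem~\ref{ThmIntroSummary}) and the pole of $\wh\Box^{-1}$ at $\sigma=0$ could in principle be of order two rather than simple, with a genuine Jordan-block component. The essential content of the iterated application of Lemma~\ref{LemmaNoBdyTerms} is that it silently implements the boundary-pairing argument of Proposition~\ref{PropResStates} one level deeper, converting the weak conditions $\hd\hdel u = \hdel\hd u = 0$ into the strong conditions $\hd\hdel v = \hdel\hd v = 0$ and thereby killing $u$ outright.
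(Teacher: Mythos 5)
Your proof is correct and relies on the same key ingredients as the paper's (Corollary~\ref{CorKernelBox} together with the double application of Lemma~\ref{LemmaNoBdyTerms} for the intersection, and the index-zero Fredholm property of $\wt\Box(0)$ for the sum), but it is organized differently. The paper first establishes the analogous decomposition $\CI(\wt X)=\ker\wt\Box\oplus\ran\wt\Box$ on the extended space; there the intersection argument only shows that $\td\,\tdel\wt v$ and $\tdel\,\td\wt v$ are supported in $\wt X\setminus X$, so unique continuation on the asymptotically de Sitter side is needed to conclude $\wt u\equiv 0$, and \eqref{EqHodge} is then obtained by extending $u\in\CI_{(0)}$ arbitrarily to $\wt X$, splitting there, and restricting both summands. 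You instead run the intersection argument directly on $X$, where $u=\wh\Box v$ with $v\in\CI_{(0)}$ is already exhibited, so $u=\hd\,\hdel v+\hdel\,\hd v=0$ follows outright and no unique continuation is needed at that stage (it still enters implicitly through Lemma~\ref{LemmaPolesBox}, which you invoke to get $\dim\ker\wt\Box(0)=\dim\cK$). The one point you should make explicit is the assertion that $\ran_{\CI_{(0)}}\wh\Box$ has codimension exactly $\dim\cK$ in $\CI_{(0)}$: the extended Fredholm theory by itself only gives an upper bound. The clean statement is that restriction to $X$ is surjective from $\CI(\wt X)$ onto $\CI_{(0)}$ (this is how $\CI_{(0)}$ was defined) and intertwines $\wt\Box(0)$ with $\wh\Box$ by \eqref{EqMellinTransformRelation}, since there is no phase factor at $\sigma=0$; hence it induces a surjection of $\CI(\wt X)/\ran\wt\Box$, whose dimension is $\dim\ker\wt\Box(0)=\dim\cK$ by the index-zero property, onto $\CI_{(0)}/\ran_{\CI_{(0)}}\wh\Box$, giving $\codim\ran_{\CI_{(0)}}\wh\Box\leq\dim\cK$; your intersection step supplies the reverse inequality, and the two together yield the sum. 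With that sentence added your argument is complete, and it is in fact slightly more economical than the paper's, since the full direct sum on $\wt X$ (and hence the unique continuation step used to prove it) is never needed.
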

\begin{proof}
  We first claim that such a decomposition holds on $\wt X$, i.e.\ we claim that
  \begin{equation}
  \label{EqHodgeTilde}
    \CI(\wt X) = \ker\wt\Box \oplus \ran\wt\Box.
  \end{equation}
  First of all, recall that $\wt\Box$ is Fredholm with index $0$ as an operator~\eqref{EqBoxFred} for all sufficiently large $s$, and a complement to its range is given by an $s$-independent finite-dimensional subspace of $\CI(\wt X)$, namely, the kernel of its adjoint. Thus, the range of $\wt\Box\colon\CI(\wt X)\to\CI(\wt X)$ is closed, and its codimension equals the dimension of the kernel of $\wt\Box$. Hence, in order to show \eqref{EqHodgeTilde}, we merely need to check that the intersection of $\ker\wt\Box$ and $\ran\wt\Box$ is trivial.
  
  Thus, let $\wt u\in\ker\wt\Box\cap\ran\wt\Box$, and write $\wt u=\wt\Box\wt v$. Let $v=\wt v|_X$. Then $\wt u\in\ker\wt\Box$ means, restricting to $X$ and using Corollary~\ref{CorKernelBox}, that $\hd\,\hdel\,\hd\,\hdel v=0$ and $\hdel\,\hd\,\hdel\,\hd v=0$. Repeated application of Lemma~\ref{LemmaNoBdyTerms} thus implies $\hdel\,\hd v=0$ and $\hd\,\hdel v=0$, hence $\tdel\,\td\wt v$ and $\td\,\tdel\wt v$ are supported in $\wt X\setminus X$. (This argument shows the uniqueness of the decomposition \eqref{EqHodge}.) Therefore $\wt u$ is a solution of $\wt\Box\wt u=0$ which is supported in $\wt X\setminus X$. By unique continuation at infinity on the asymptotically de Sitter side $\wt X\setminus X$ of $\wt X$, this implies $\wt u\equiv 0$, as claimed.

  Now if $u\in\CI_{(0)}$ is given, extend it arbitrarily to $\wt u\in\CI(\wt X)$, apply \eqref{EqHodgeTilde} and restrict both summands back to $X$. This establishes \eqref{EqHodge}.
\end{proof}

\begin{rmk}
\label{RmkNoDDelHodge}
  The decomposition \eqref{EqHodge} does \emph{not} hold if we replace $\wh\Box$ in \eqref{EqHodge} by $\hd+\hdel$. Indeed, if it did hold, this would say that $\wh\Box u=0$ implies $(\hd+\hdel)u=0$, since $(\hd+\hdel)u$ lies both in $\ker(\hd+\hdel)$ and $\ran(\hd+\hdel)$ in this case. Since certainly $(\hd+\hdel)u=0$ conversely implies $\wh\Box u=0$, this would mean that $\ker\wh\Box=\ker(\hd+\hdel)$. Now by Lemmas~\ref{LemmaPolesBox} and \ref{LemmaPoles}, this in turn would give $\ker\wt\Box=\ker(\td+\tdel)$. Now since $\wt\Box$ and $\td+\tdel$ are Fredholm with index $0$, we could further deduce $\ker\wt\Box^*=\ker(\td+\tdel)^*$, where the adjoints act on the space $\CmIdot(\wt X)$ of supported distributions at the (artificial) Cauchy hypersurface $\pa\wt X$, see \cite[Appendix~B]{HormanderAnalysisPDE}. Since we have $\ker(\td+\tdel)^*\subset\ker\wt\Box^*$ unconditionally, we can show the absurdity of this last equality by exhibiting an element $u$ in $\ker\wt\Box^*$ which does not lie in $\ker(\td+\tdel)^*$. This however is easy: just let $u=1_X$ be the characteristic function of $X$. Then from \eqref{EqDifferentialCinvC} and \eqref{EqCodifferentialCinvC}, we see that $(\td+\tdel)u=\td u$ is a non-zero delta distribution supported at $Y$ which is annihilated by $\tdel$.

  This argument shows that we always have $\ker\wh\Box\supsetneq\ker(\hd+\hdel)$. It is possible though that $\cH^k=\cK^k$ for \emph{some} form degrees $k$ (but this must fail for some value of $k$). For instance, this holds for $k=0$ by Corollary~\ref{CorBoxL2Kernel}. We will give a more general statement below, see in particular Remark~\ref{RmkDDelBoxKernelsEqual}.
\end{rmk}

We now define a complex whose cohomology we will relate to the spaces $\cH^k$ and $\cK^k$: the space $\CI_{(0)}\cap L^2(\alpha|dh|)$ of smooth forms $u=\sC\wt u$ with $\wt u_{NT}|_Y=0$ has a grading corresponding to form degrees, thus
\[
  \cD:=\CI_{(0)}\cap L^2(\alpha|dh|) = \bigoplus_{k=0}^n \cD^k.
\]
Since in the above notation $u\in L^2(\alpha|dh|)$ (and thus $\wt u_{NT}|_Y=0$) is equivalent to $\wt u_{NT}\in\alpha^2\CI(\Xeven;\Lambda Y)$ near $Y$, one can easily check using \eqref{EqDifferentialCinvC} that $\hd$ acts on $\CI_{(0)}\cap L^2(\alpha|dh|)$. We can then define the complex
\[
  0 \to \cD^0 \xra{\hd} \cD^1 \to \ldots \xra{\hd} \cD^n \to 0.
\]
We denote its cohomology by
\begin{equation}
\label{EqL2CohoDef}
  \cH^k_{L^2,\dR} = \ker(\hd\colon\cD^k\to\cD^{k+1})/\ran(\hd\colon\cD^{k-1}\to\cD^k).
\end{equation}
There is a natural map from $\cH^k_{L^2,\dR}$ into $\cH^k$:

\begin{lemma}
\label{LemmaCohoMap}
  Every cohomology class $[u]\in\cH^k_{L^2,\dR}$ has a unique representative $u'\in\cH^k$, and the map $i\colon[u]\mapsto u'$ is injective.
\end{lemma}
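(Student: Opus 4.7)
The plan is to use the Hodge decomposition of Lemma~\ref{LemmaHodgeDecomp} to produce the representative. Given $u\in\cD^k$ with $\hd u=0$, I would first decompose $u = u' + \wh\Box v$ with $u'\in\ker\wh\Box$ and $v\in\CI_{(0)}^k$; since $\wh\Box$ preserves form degree, both summands have pure form degree $k$.

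Next, I would verify $\hd u'=0$: applying $\hd$ to the decomposition yields $\hd u' = -\hd\wh\Box v = -\wh\Box(\hd v)$, placing $\hd u'$ simultaneously in $\ker\wh\Box$ (since $\wh\Box$ commutes with $\hd$) and in $\ran\wh\Box$, so the directness of the Hodge decomposition forces $\hd u'=0$. This in turn gives $\hd\hdel\hd v=\hd\wh\Box v=0$, and Lemma~\ref{LemmaNoBdyTerms} upgrades this to $\hdel\hd v=0$. Hence $\wh\Box v=\hd\hdel v$, i.e., $u = u' + \hd w$ for $w:=\hdel v\in\CI_{(0)}^{k-1}$.

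To finish existence, I would then check that $w\in\cD^{k-1}$, which will force $u' = u-\hd w\in\cD\subset L^2(\alpha|dh|)$ and hence $u'\in\cH^k$ by Corollary~\ref{CorBoxL2Kernel}. The membership $w\in\cD^{k-1}$ amounts to the vanishing at $Y$ of the $NT$-component of $\sC^{-1}w$; using the formulas \eqref{EqDifferentialCinvC}--\eqref{EqCodifferentialCinvC} in the basis \eqref{EqBundleDecWarped}, this boundary value can be expressed in terms of $\wt v_{NT}|_Y$ and $\wt v_{NN}|_Y$, and the needed vanishing extracted from $\hdel\hd v=0$ together with $u\in\cD$ (i.e., $\wt u_{NT}|_Y=0$) by a pairing argument on the compact manifold $Y$, in the spirit of the pairings used in Propositions~\ref{PropAbsenceNonZero} and~\ref{PropResStates}. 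I expect this boundary analysis to be the main technical obstacle.

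Uniqueness of the representative and injectivity of $i$ are both delivered by a single pairing argument. If $u_1',u_2'\in\cH^k$ both represent $[u]$, then $u_1'-u_2'=\hd w_0$ for some $w_0\in\cD^{k-1}$, and $\hdel(u_1'-u_2')=0$ yields $\hdel\hd w_0=0$. Since $w_0\in L^2(\alpha|dh|)$ satisfies $\wt w_{0,NT}|_Y=0$, the boundary terms of the type appearing in \eqref{EqDiffBdyPairings} all vanish, and integration by parts gives
\[
  0 = \la\hdel\hd w_0,w_0\ra = \|\hd w_0\|_{L^2(\alpha|dh|;H)}^2,
\]
whence $u_1'=u_2'$. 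Injectivity of $i$ then follows immediately: if $i([u_1])=i([u_2])$ with common representative $u'$, then $u_j-u'\in\hd\cD^{k-1}$, so $u_1-u_2\in\hd\cD^{k-1}$ and $[u_1]=[u_2]$.
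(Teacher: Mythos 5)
Your strategy is the paper's: decompose $u$ via Lemma~\ref{LemmaHodgeDecomp}, use Lemma~\ref{LemmaNoBdyTerms} to replace $\wh\Box v$ by $\hd\,\hdel v$, and conclude membership in $\cH^k$ via Corollary~\ref{CorBoxL2Kernel}; your uniqueness and injectivity arguments are correct and essentially identical to the paper's (your integration by parts is precisely the proof of Corollary~\ref{CorBoxL2Kernel}), and your derivation of $\hdel\,\hd v=0$ (directness of \eqref{EqHodge} plus $\hd\wh\Box=\wh\Box\hd$, then the second half of Lemma~\ref{LemmaNoBdyTerms}) is a harmless variant of the paper's route via $\hdel\,\hd\,\hdel\,\hd u_2=0$. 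The problem is the step you yourself flag as the main obstacle: you never prove $w=\hdel v\in\cD$, and the inputs you propose are not sufficient to do so. From $\hdel\,\hd v=0$, the $(NT)$-trace computed from \eqref{EqDifferentialCinvC}--\eqref{EqCodifferentialCinvC} gives $\delta_Y d_Y\wt v_{NT}|_Y=0$, hence after one pairing on $Y$ only $d_Y\wt v_{NT}|_Y=0$; and $u\in\cD$ gives $\wt u'_{NT}|_Y=d_Y\delta_Y\wt v_{NT}|_Y$. These two facts alone do not force $\delta_Y\wt v_{NT}|_Y=0$ (for instance $\wt v_{NT}|_Y=d_Y f$ with $f$ non-constant is consistent with both); the missing ingredient is any use of $u'\in\ker\wh\Box$.

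The paper closes exactly this gap by applying $\hd\,\hdel$ to the decomposition: since $\hd\,\hdel u'=0$ by Corollary~\ref{CorKernelBox}, one gets $\hd\,\hdel\,\hd\,\hdel v=\hd\,\hdel u$, and the right-hand side lies in $L^2$ because $\wt u_{NT}|_Y=0$; the $(NT)$-component of this identity restricted to $Y$ therefore reads $d_Y\delta_Y d_Y\delta_Y\wt v_{NT}|_Y=0$, and two integrations by parts on the closed manifold $Y$ yield $\delta_Y\wt v_{NT}|_Y=0$, i.e.\ $w=\hdel v\in L^2$, whence $u'=u-\hd w\in L^2$ and $u'\in\cH^k$. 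Equivalently, you could invoke the boundary analysis in the proof of Proposition~\ref{PropResStates} to see that $\wt u'_{NT}|_Y$ is harmonic on $Y$ and pair it against $\wt u'_{NT}|_Y=d_Y\delta_Y\wt v_{NT}|_Y$, using orthogonality of harmonic and exact forms. Either way, the decisive input is information about $u'$, not just $\hdel\,\hd v=0$ and $u\in\cD$; until this pairing is actually carried out, the existence half of the lemma is not proved.
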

\begin{proof}
  Let $[u]\in\cH^k_{L^2,\dR}$, hence $\hd u=0$ and, writing $u=\sC\wt u$, $\wt u_{NT}|_Y=0$. We first show the existence of a representative, i.e.\ an element $u-\hd v$ with $v\in\cD$, which is annihilated by $\hdel$. (Since it is clearly annihilated by $\hd$, this means $u-\hd v\in\cH^k$.) That is, we need to solve the equation $\hdel\,\hd v=\hdel u$ with $v\in\cD$. To achieve this, we use Lemma~\ref{LemmaHodgeDecomp} to write
  \[
    u = u_1 + (\hd\,\hdel+\hdel\,\hd)u_2,\quad u_1\in\ker\wh\Box.
  \]
  By our assumption on $u$ and Corollary~\ref{CorKernelBox}, $u$ and $u_1$ are annihilated by $\hdel\,\hd$, giving $\hdel\,\hd\,\hdel\,\hd u_2=0$. By Lemma~\ref{LemmaNoBdyTerms}, this implies $\hdel\,\hd u_2=0$, hence
  \begin{equation}
  \label{EqCohoMapHodge}
    u=u_1+\hd\,\hdel u_2.
  \end{equation}
  Applying $\hd\,\hdel$, we obtain
  \begin{equation}
  \label{EqDDelDDelU2}
    \hd\,\hdel\,\hd\,\hdel u_2=\hd\,\hdel u \in L^2.
  \end{equation}
  Now writing $u_2=\sC\wt u_2$, and noting that for any $w=\sC\wt w\in\CI_{(0)}$, $(\sC^{-1}\hd\sC\wt w)_{NT}|_Y=-d_Y\wt w_{NT}|_Y$ as well as $(\sC^{-1}\hdel\sC\wt w)_{NT}|_Y=\delta_Y\wt w_{NT}|_Y$ by \eqref{EqDifferentialCinvC} and \eqref{EqCodifferentialCinvC}, the $(NT)$ component of $\sC^{-1}$ times equation \eqref{EqDDelDDelU2} reads $d_Y\delta_Y d_Y\delta_Y\wt u_{2,NT}|_Y=0$, which yields $\delta_Y\wt u_{2,NT}|_Y=0$. As a consequence of this, $v:=\hdel u_2\in L^2$ and therefore $\hd\,\hdel u_2\in L^2$. Hence \eqref{EqCohoMapHodge} gives $u_1\in L^2$; by Corollary~\ref{CorBoxL2Kernel} then, $u_1\in\cH$, in particular $u_1$ is annihilated by $\hdel$. Therefore, applying $\hdel$ to \eqref{EqCohoMapHodge} yields $\hdel(u-\hd v)=0$, as desired.

  Next, we show that the representative is unique: thus, suppose $u-\hd v_1,u-\hd v_2\in\cH^k$ with $u,v_1,v_2\in\cD$, then with $v=v_1-v_2\in\cD$, we have $\hd v\in\cH^k$, thus $\hdel\,\hd v=0$, and by Corollary~\ref{CorBoxL2Kernel}, we obtain $\hd v=0$. Therefore, $u-\hd v_1=u-\hd v_2$, establishing uniqueness, which in particular shows that the map $i$ is well-defined.

  Finally, we show the injectivity of $i$: suppose $u\in\cD$ satisfies $\hd u=0$. There exists an element $v\in\cD$ such that $u-\hd v\in\cH^k$. Now if $i[u]=0$, this precisely means that $u-\hd v=0$; but then $[u]=[\hd v]=0$ in $\cH^k_{L^2,\dR}$.
\end{proof}

From the definition of the space $\cD$, it is clear that $u\in\cH^k$ lies in the image of $i$ if and only if $u\in L^2$, i.e.\ if and only if $r(u)=0$, where $r$ is the map
\begin{equation}
\label{EqRestrictionMap}
  r\colon\CI_{(0)}\to\CI(Y;\Lambda Y),\quad u=\sC\wt u\mapsto \wt u_{NT}|_Y.
\end{equation}
Thus, $r$ extracts the singular part of $u$ and thereby measures the failure of a given form $u\in\CI_{(0)}$ to lie in $\cD$. Observe that if $u=\sC\wt u\in\cH^k$, then $d_Y\wt u_{NT}|_Y=0$ and $\delta_Y\wt u_{NT}|_Y=0$, i.e.\ $r(u)$ is a harmonic form on $Y$. Since the space $\ker(\Delta_{Y,k-1})$ of harmonic forms on the closed manifold $Y$ is isomorphic to the cohomology group $H^{k-1}(Y)$ by standard Hodge theory, we thus obtain:
\begin{prop}
\label{PropSequenceDDel}
  The sequence
  \begin{equation}
  \label{EqSequenceDDel}
    0 \to \cH^k_{L^2,\dR} \xra{i} \cH^k \xra{r} H^{k-1}(Y)
  \end{equation}
  is exact. Here, $i$ is the map defined in Lemma~\ref{LemmaCohoMap}, and $r$ is the restriction map \eqref{EqRestrictionMap} (composed with the identification $\ker(\Delta_{Y,k-1})\cong H^{k-1}(Y)$). Moreover, the map $i\colon\cH^k_{L^2,\dR}\to\cH^k\cap\cD$ is an isomorphism with inverse $\cH^k\cap\cD\ni u\mapsto[u]\in\cH^k_{L^2,\dR}$.
\end{prop}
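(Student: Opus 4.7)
The plan is to consolidate three ingredients already established in the paper into the statement. First, I would verify that $r$ is well-defined as a map $\cH^k \to H^{k-1}(Y)$. The proof of Proposition~\ref{PropResStates} established in \eqref{EqUNTHarmonic} that any $u = \sC\wt u \in \cH$ satisfies $d_Y \wt u_{NT}|_Y = 0$ and $\delta_Y \wt u_{NT}|_Y = 0$. Thus $\wt u_{NT}|_Y$ is a harmonic $(k-1)$-form on the closed Riemannian manifold $(Y,k|_Y)$, so by classical Hodge theory on a closed manifold it canonically represents an element of $H^{k-1}(Y)$, and the map $r$ is well-defined.

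Next I would establish exactness at $\cH^k$ by identifying $\operatorname{image}(i) = \ker(r) = \cH^k \cap \cD$. The equality $\ker(r) = \cH^k \cap \cD$ is immediate from the definition of $\cD$: membership of $u = \sC\wt u$ in $\cD$ is exactly $\wt u_{NT}|_Y = 0$, and this vanishing is equivalent to the vanishing of the corresponding harmonic representative by injectivity of the Hodge isomorphism $\ker\Delta_{Y,k-1} \cong H^{k-1}(Y)$. The inclusion $\operatorname{image}(i) \subset \cD$ is built into the construction in Lemma~\ref{LemmaCohoMap}: the canonical representative $u' = u - \hd v$ of $i[u]$ is produced with $u,v \in \cD$, and since $\hd$ preserves $\cD$ (as observed just before the definition \eqref{EqL2CohoDef}), we get $u' \in \cD$. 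For the reverse inclusion, given $u \in \cH^k \cap \cD$, the form $u$ is itself a cocycle in the complex $(\cD^\bullet, \hd)$ and already lies in $\cH^k$, so the uniqueness part of Lemma~\ref{LemmaCohoMap} forces $i[u] = u$, proving $\cH^k \cap \cD \subset \operatorname{image}(i)$.

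Finally, the isomorphism claim follows: injectivity of $i \colon \cH^k_{L^2,\dR} \to \cH^k \cap \cD$ is exactly the injectivity of $i$ proved in Lemma~\ref{LemmaCohoMap}, and surjectivity onto $\cH^k \cap \cD$ was just established above with explicit inverse $u \mapsto [u]$.

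There is no serious obstacle here: all the analytic content (the Hodge-type decomposition of Lemma~\ref{LemmaHodgeDecomp}, the absence of boundary contributions from Lemma~\ref{LemmaNoBdyTerms}, and the $L^2$-kernel identification in Corollary~\ref{CorBoxL2Kernel}) has already been packaged into Lemma~\ref{LemmaCohoMap} and the proof of Proposition~\ref{PropResStates}. The only point requiring care is making sure that $r(u) = 0$ as an element of $H^{k-1}(Y)$ really is equivalent to the pointwise vanishing $\wt u_{NT}|_Y = 0$, but this follows at once because $\wt u_{NT}|_Y$ is itself harmonic and the Hodge map $\ker\Delta_{Y,k-1} \to H^{k-1}(Y)$ is an isomorphism.
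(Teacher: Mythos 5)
Your proposal is correct and follows essentially the same route as the paper: well-definedness of $r$ via the harmonicity of $\wt u_{NT}|_Y$ (which the paper records just before the proposition), exactness at $\cH^k$ via the equivalence $u\in L^2\iff\wt u_{NT}|_Y=0\iff r(u)=0$, and the identification of $i$ as a bijection onto $\cH^k\cap\cD$ using the uniqueness statement of Lemma~\ref{LemmaCohoMap}. The only difference is presentational — the paper disposes of everything but the last claim in the prose preceding the proposition — so no further comment is needed.
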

\begin{proof}
  We only need to check the last claim. If $u\in\cH^k\cap\cD$, then $[u]$ does define a cohomology class in $\cH^k_{L^2,\dR}$, and $i([u])$ is the unique representative of $[u]$ which lies in $\cH^k$. Since $u$ itself is such a representative, we must have $i([u])=u$. For the converse, we note that for any $[u]\in\cH^k_{L^2,\dR}$ we have $i([u])=u-\hd v$ for some $v\in\cD$, hence $[i([u])]=[u-\hd v]=[u]$.
\end{proof}

We can make a stronger statement: if we merely have $u\in\ker\wh\Box$, then the proof of Proposition~\ref{PropResStates} shows that $\hd u,\hdel u\in L^2$, hence $r(u)$ is harmonic. 

\begin{prop}
\label{PropSequenceBox}
  We have a short exact sequence
  \begin{equation}
  \label{EqSequenceBox}
    0 \to \cH^k_{L^2,\dR} \xra{i} \cK^k \xra{r} H^{k-1}(Y) \to 0,
  \end{equation}
  where the first map is $i$ defined in Lemma~\ref{LemmaCohoMap} (composed with the inclusion $\cH^k\hookrightarrow\cK^k$), and the second map is the restriction $r$, defined in \eqref{EqRestrictionMap} (composed with the identification $\ker(\Delta_{Y,k-1})\cong H^{k-1}(Y)$).
\end{prop}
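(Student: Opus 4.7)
The injectivity of $i$ and exactness at $\cK^k$ follow almost immediately from Proposition~\ref{PropSequenceDDel} and Corollary~\ref{CorBoxL2Kernel}: the image of $i$ lies in $\cH^k \cap \cD \subset \ker r$, while conversely any $u \in \cK^k$ with $r(u) = 0$ is in $\cD$ (as $r(u) = 0$ is precisely the square-integrability condition), hence in $\cH^k \cap \cD = \im(i)$ by Corollary~\ref{CorBoxL2Kernel} combined with the previous proposition. The injectivity of $i\colon \cH^k_{L^2,\dR} \to \cK^k$ follows from its injectivity into the subspace $\cH^k \subset \cK^k$ already established in Proposition~\ref{PropSequenceDDel}.

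The essential new content is the surjectivity of $r$. Given a harmonic $\eta \in \ker\Delta_{Y,k-1}$, the plan is a two-step construction. First, I build a formal approximate solution $u_0 \in \CI_{(0)}^k$ with $r(u_0) = \eta$ and $\wh\Box u_0 \in \CIdot(\Xeven)$ via an inductive Taylor expansion at $Y$. Writing $u_0 = \sC \wt u_0$ with $\wt u_0 = (\wt u_{TT}, \wt u_{TN}, \wt u_{NT}, \wt u_{NN}) \in \CI(\Xeven; \Lambda Y)^4$ and expanding each component in $\mu = \alpha^2$, I prescribe $\wt u_{NT}|_Y = \eta$; the harmonicity $d_Y \eta = \delta_Y \eta = 0$ is precisely the compatibility condition for the leading indicial equation of $\wh\Box = (\hd+\hdel)^2$, whose structure in the $\sC$-basis is legible from \eqref{EqDifferentialCinvC}--\eqref{EqCodifferentialCinvC} and whose diagonal part is essentially $\Delta_Y$ coupled with $\alpha\pa_\alpha$-factors. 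Successive Taylor coefficients are solvable inductively because the error at each order lies in the range of the indicial operator after adjustment of the $TT$, $TN$, $NN$ components, reducing to equations $\Delta_Y(\cdot) = f_j$ on $Y$ whose obstructions can be absorbed into the remaining freedom at that order. Borel summation then produces $u_0$ with the stated properties.

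Second, I correct $u_0$ by seeking $v \in \CIdot(\Xeven)^k \subset \cD^k$ with $\wh\Box v = \wh\Box u_0$; then $u := u_0 - v \in \cK^k$ and $r(u) = \eta - 0 = \eta$. Since $f_0 := \wh\Box u_0$ vanishes to infinite order at $Y$, it extends by zero across $Y$ to $\wt f_0 \in \CI(\wt X; \Lambda\wt X \oplus \Lambda\wt X)$ supported in $\Xeven$. Applying the Hodge decomposition of Lemma~\ref{LemmaHodgeDecomp} on $\wt X$, I write $\wt f_0 = f_1 + \wt\Box \wt v$ with $f_1 \in \ker\wt\Box$ and $\wt v \in \CI(\wt X)$. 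Working in the Cauchy hypersurface setup and invoking the unique continuation argument on the asymptotically de Sitter region $\wt X \setminus \Xeven$ used in the proof of Lemma~\ref{LemmaPolesBox}, one can arrange $\wt v$ to vanish on $\wt X \setminus \Xeven$, which simultaneously forces $f_1 = 0$ by uniqueness of the Hodge decomposition; setting $v := \wt v|_X$ then completes the construction.

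The main obstacle I anticipate is this correction step: arranging that the preimage $\wt v$ be supported in $\Xeven$, and hence that the restriction $v$ vanish to infinite order at $Y$, requires a delicate interplay between the global Hodge decomposition on $\wt X$, the one-sided support of $\wt f_0$, and the unique continuation principle at the de Sitter infinity. A plausible alternative route would be to verify directly that the pairing $\la f_0, w\ra = 0$ for every $w \in \cK$ by an integration by parts against the specific formal solution $u_0$, exploiting the harmonicity of $r(w)$ to cancel all boundary contributions at $Y$; this would place $f_0 \in \ran\wt\Box$ automatically. Either approach ultimately relies on the finite-dimensionality of $\cK$ and on the precise boundary structure at $Y$ captured by the basis change $\sC$.
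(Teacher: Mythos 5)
Your treatment of injectivity, of $r\circ i=0$, and of exactness at $\cK^k$ (via Corollary~\ref{CorBoxL2Kernel} and the identification of $\cH^k\cap\cD$ with the range of $i$) is correct and coincides with the paper's argument. The problem is the surjectivity of $r$, which is where the real content lies, and there your construction has a genuine gap at the correction step. You need $v$ with $\wh\Box v=f_0:=\wh\Box u_0$ and $r(v)=0$, and you propose to obtain it by extending $f_0$ by zero to $\wt f_0$, applying Lemma~\ref{LemmaHodgeDecomp} to write $\wt f_0=f_1+\wt\Box\wt v$, and then claiming that unique continuation lets one "arrange" $\wt v$ to be supported in $\Xeven$ and forces $f_1=0$. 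Neither claim is justified: $f_1=0$ is equivalent to $\wt f_0\in\ran\wt\Box(0)$, and the obstruction to this is the pairing of $\wt f_0$ with the \emph{dual} states in $\ker\wt\Box(0)^*$ — supported distributions such as $1_X$ and $\delta$-distributions at the horizons (cf.\ Lemma~\ref{LemmaSDSDualStates}) — not with $\cK=\ker\wh\Box$ as in your fallback suggestion, which targets the wrong orthogonality condition. Since $u_0\in\CI_{(0)}$ has an $\alpha^{-1}$-singular $(NT)$ component (that is the whole point, $r(u_0)=\eta\neq0$), the pairing $\la\wh\Box u_0,\psi\ra$ cannot be integrated by parts without boundary contributions at $Y$, so there is no a priori reason these obstructions vanish. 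Unique continuation does not help: it applies to solutions of the homogeneous equation vanishing to infinite order (at $Y$ or at de Sitter infinity), and cannot force the particular $\wt v$ produced by the direct sum decomposition to have support in $\Xeven$; prescribing the support of a solution of an inhomogeneous elliptic-degenerate problem is an additional, generally unsolvable, requirement. Moreover, your first step (formal solution at $Y$ via the indicial equation and Borel summation) is only asserted: with $0$ an indicial root, the solvability of each Taylor coefficient and the absence of logarithmic terms would have to be checked, and the claim that the harmonic obstructions "can be absorbed into the remaining freedom" is exactly what needs proof.

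For comparison, the paper's proof of surjectivity avoids solving any inhomogeneous equation and needs no formal expansion: take \emph{any} $u'=\sC\wt u'\in\CI_{(0)}$ with $\wt u'_{NT}|_Y=\eta$; harmonicity of $\eta$ together with \eqref{EqDifferentialCinvC}--\eqref{EqCodifferentialCinvC} gives $(\hd+\hdel)u'\in\cD$ and hence $\wh\Box u'\in\cD$; then apply Lemma~\ref{LemmaHodgeDecomp} to $u'$ itself, $u'=u_1+\wh\Box u_2$ with $u_1\in\ker\wh\Box$, so that $\wh\Box u'=\wh\Box^2u_2$, and reading off the $(NT)$ component at $Y$ gives $\Delta_Y^2\wt u_{2,NT}|_Y=0$, hence $\wt u_{2,NT}|_Y$ is harmonic, $\wh\Box u_2\in L^2$, and $r(u_1)=r(u')=\eta$ with $u_1\in\cK$. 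If you want to salvage your two-step scheme, you would have to prove the vanishing of the pairings of $f_0$ with all dual states and then still arrange $r(v)=0$ for the resulting solution; at that point the paper's soft argument is both shorter and strictly easier.
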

\begin{proof}
  The second map is well-defined by the comment preceding the statement of the proposition. Since the range of $\cH^k_{L^2,\dR}$ in $\cK^k$ consists of $L^2$ forms, we have $r\circ i=0$. Moreover, if $u\in\ker r$, then $u$ is an $L^2$ element of $\ker\wh\Box$, thus $u\in\cH^k$ by Corollary~\ref{CorBoxL2Kernel}. By the remark following the proof of Lemma~\ref{LemmaCohoMap}, therefore $u\in\ran i$.
  
  It remains to show the surjectivity of $r$: thus, let $w\in\ker(\Delta_{Y,k-1})$, and let $u'=\sC\wt u'\in\CI_{(0)}$ be any extension of $w$, i.e.\ $\wt u'_{NT}|_Y=w$. Then $(\hd+\hdel)u'\in\cD$, since its $(NT)$ component vanishes, and thus $\wh\Box u'\in\cD$. Writing $u'=u_1+\wh\Box u_2$ with $u_1\in\ker\wh\Box$, we conclude that $\wh\Box u'=\wh\Box^2 u_2$; taking the $(NT)$ component of this equation gives $0=\Delta_Y^2\wt u_{2,NT}|_Y$ (where we write $u_2=\sC\wt u_2$ as usual), hence $d_Y\wt u_{2,NT}|_Y=0$ and $\delta_Y\wt u_{2,NT}|_Y=0$. But then $\wh\Box u_2\in L^2$. Therefore, $w=r(u')=r(u_1+\wh\Box u_2)=r(u_1)$. Since the degree $k$ part of $u_1$ lies in $\cK^k$ by the definition of $u_1$, we are done.
\end{proof}

\begin{rmk}
  Remark \ref{RmkNoDDelHodge}, which states that $\cH^k\subsetneq\cK^k$ for some values of $k$, implies in particular that the last map of \eqref{EqSequenceDDel} is not always onto.
\end{rmk}

\begin{rmk}
\label{RmkDDelBoxKernelsEqual}
  Since $\dim Y=n-2$, we have $H^{k-1}(Y)=0$ for $k=0$ and $k=n$. Hence, for these extreme values of $k$, Propositions \ref{PropSequenceDDel} and \ref{PropSequenceBox} show $\cH^k=\cK^k\cong\cH^k_{L^2,\dR}$, and this holds more generally for all $k$ for which $H^{k-1}(Y)=0$.
\end{rmk}

The spaces $\cH^k_{L^2,\dR}$ are related to standard cohomology groups associated with the manifold with boundary $\oX$: first, notice that elements of the space $\cD=\CI_{(0)}\cap L^2$ are not subject to any matching condition on singular terms, simply because the singular term ($\wt u_{NT}|_Y$ in the notation used above) vanishes. This means that we can split $\cD$ into tangential and normal forms, $\cD=\cD_T\oplus\cD_N$,\footnote{Thus, elements $(u_T,u_N)\in\cD_T\oplus\cD_N$ are identified with $u_T+\alpha\,dt\wedge u_N\in\cD$.} where $\cD_T$ consists of all $u_T\in\CI(\oX;\Lambda\oX)$ which are of the form
\[
  u_T = \begin{pmatrix} u_{TT} \\ \alpha u_{TN} \end{pmatrix},\quad u_{TT},u_{TN}\in\CI(\Xeven;\Lambda Y),
\]
near $Y$. Thus, elements $u_T\in\cD_T$ are forms of the type $u_T=u_{TT}+d\alpha\wedge \alpha u_{TN}=u_{TT}+\frac{1}{2}d\mu\wedge u_{TN}$ with $u_{TT},u_{TN}$ smooth $\Lambda Y$-valued forms on $\Xeven$; hence, we simply have $\cD_T=\CI(\Xeven;\Lambda\Xeven)$. Likewise, $\cD_N$ consists of all $u_N\in\CI(\oX;\Lambda\oX)$ which are of the form
\[
  u_N = \begin{pmatrix} \alpha u_{NT} \\ u_{NN} \end{pmatrix},\quad u_{NT},u_{NN}\in\CI(\Xeven;\Lambda Y),
\]
near $Y$. Thus, elements $u_N\in\cD_T$ are forms of the type $\alpha u_N=\mu u_{NT}+\frac{1}{2}d\mu\wedge u_{NN}$; therefore, $\alpha\cD_N=\CI_R(\Xeven;\Lambda\Xeven):=\{u\in\CI(\Xeven;\Lambda\Xeven)\colon j^*u=0\}$, where $j\colon\pa\Xeven\hookrightarrow\Xeven$ is the inclusion.

Since the differential $\hd$ on $\cD$ acts as $d_X\oplus(-\alpha^{-1}d_X\alpha)$ on $\cD_T\oplus\cD_N$, the cohomology of the complex $(\cD,\hd)$ in degree $k$ is the direct sum of the cohomology of $(\cD_T,d_X)$ in degree $k$ and of $(\alpha\cD_N,d_X)$ in degree $(k-1)$. Since we identified $\cD_T$ as simply the space of smooth forms on $\Xeven$, the cohomology of $(\cD_T,d_X)$ in degree $k$ equals the absolute cohomology $H^k(\Xeven)\cong H^k(\oX)$.\footnote{We use that $\Xeven$ is diffeomorphic to $\oX$, with diffeomorphism given by gluing the map $\alpha^2\mapsto\alpha$ near $Y$ to the identity map away from $Y$.} Moreover, since $\cD_N$ is the space of smooth forms on $\Xeven$ which vanish at the boundary in the precise sense described above, the cohomology of $(\alpha\cD_N,d_X)$ in degree $k$ equals the relative cohomology $H^k(\Xeven;\pa\Xeven)\cong H^k(\oX;\pa\oX)$ (see e.g.\ \cite[\S5.9]{TaylorPDE}). In summary:
\begin{prop}
\label{PropL2Coho}
  With $\cH^k_{L^2,\dR}$ defined in \eqref{EqL2CohoDef}, there is a canonical isomorphism
  \begin{equation}
  \label{EqL2CohoIsomorphism}
    \cH^k_{L^2,\dR} \cong H^k(\oX) \oplus H^{k-1}(\oX,\pa\oX).
  \end{equation}
\end{prop}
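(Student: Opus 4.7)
The plan is to split the complex $(\cD^\bullet, \hd)$ into its tangential and normal pieces and to identify each piece with a classical smooth de Rham complex on $\Xeven$, at which point the standard absolute and relative de Rham theorems for a compact manifold with boundary produce the two summands.

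First, using the warped-product splitting \eqref{EqFormDecomposition}, I would write $u = u_T + \alpha\,dt\wedge u_N$, so that $\cD^k = \cD_T^k \oplus \cD_N^{k-1}$; the grading shift reflects that $u_N$ has degree $k-1$ when $u$ has degree $k$. Specializing \eqref{EqDiffHat} to $\sigma = 0$ gives $\hd = d_X \oplus (-\alpha^{-1} d_X \alpha)$, so the two pieces decouple and
\[
  \cH^k_{L^2,\dR} = H^k(\cD_T^\bullet, d_X) \oplus H^{k-1}(\cD_N^\bullet, -\alpha^{-1}d_X\alpha).
\]
Multiplication by $\alpha$ intertwines $-\alpha^{-1} d_X \alpha$ on $\cD_N$ with $-d_X$ on $\alpha\cD_N$, and the sign is cohomologically irrelevant, so the second summand may be replaced by $H^{k-1}(\alpha\cD_N^\bullet, d_X)$.

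Next, I would verify the function-space identifications $\cD_T = \CI(\Xeven; \Lambda\Xeven)$ and $\alpha\cD_N = \CI_R(\Xeven; \Lambda\Xeven)$. For $\cD_T$, an element near $Y$ has the form $u_{TT} + d\alpha \wedge \alpha u_{TN} = u_{TT} + \tfrac{1}{2} d\mu \wedge u_{TN}$ with $u_{TT}, u_{TN}$ smooth on $\Xeven$, and conversely every smooth form on $\Xeven$ decomposes this way with respect to the boundary defining function $\mu = \alpha^2$. For $\alpha\cD_N$, one computes $\alpha u_N = \mu u_{NT} + \tfrac{1}{2} d\mu \wedge u_{NN}$ near $Y$, whose pullback to $\pa\Xeven = Y$ vanishes; conversely, any $v \in \CI(\Xeven; \Lambda\Xeven)$ with $j^* v = 0$ splits uniquely as $v = v_T + \tfrac{1}{2} d\mu \wedge v'$ with $v_T$ divisible by $\mu$, hence writing $v_T = \mu w$ shows $v \in \alpha\cD_N$. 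Applying the standard absolute and relative de Rham theorems on the compact manifold with boundary $\Xeven \cong \oX$ (see \cite[\S5.9]{TaylorPDE}) then gives $H^k(\cD_T^\bullet, d) = H^k(\oX)$ and $H^{k-1}(\alpha\cD_N^\bullet, d) = H^{k-1}(\oX, \pa\oX)$, which combined yield the claimed isomorphism.

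I do not expect a serious obstacle: the argument is mostly bookkeeping. The two places that deserve care are (a) tracking the degree shift so that the normal contribution lands in degree $k-1$ rather than $k$, and (b) checking the \emph{surjectivity} in the identification $\alpha\cD_N = \CI_R(\Xeven; \Lambda\Xeven)$ — since a proper containment would make the second summand too small — which amounts to the observation that a smooth form on $\Xeven$ whose pullback to $\pa\Xeven$ vanishes necessarily has tangential component divisible by $\mu$. Both points are immediate once the collar-form decomposition in the even smooth structure is unpacked.
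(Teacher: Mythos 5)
Your proposal is correct and follows essentially the same route as the paper: split $(\cD,\hd)$ into tangential and normal pieces, identify them with $\CI(\Xeven;\Lambda\Xeven)$ and the relative complex $\CI_R(\Xeven;\Lambda\Xeven)=\{u\colon j^*u=0\}$ respectively, and invoke the absolute and relative de Rham theorems for the compact manifold with boundary $\Xeven\cong\oX$. The two points you flag for care (the degree shift and the surjectivity of $\alpha\cD_N=\CI_R$) are handled exactly as in the paper, which additionally notes that the splitting of $\cD$ is legitimate because the matching condition between singular terms is vacuous once $\wt u_{NT}|_Y=0$.
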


Let us summarize the results obtained in the previous sections:

\begin{thm}
\label{ThmSummary}
  The only resonance of $d+\delta$ in $\Im\sigma\geq 0$ is $\sigma=0$, and $0$ is a simple resonance. Zero resonant states of the extended operator ($d+\delta$ on $\wt M$) are uniquely determined by their restriction to $X$, and the space $\cH$ of these resonant states on $X$ is equal to $\ker_{\CI_{(0)}}\hd(0)\cap\ker_{\CI_{(0)}}\hdel(0)$. Also, resonant states on $\wt X$ are elements of $\ker\td(0)\cap\ker\tdel(0)$. Using the grading $\cH=\bigoplus_{k=0}^n\cH^k$ of $\cH$ by form degrees, there is a canonical exact sequence
  \begin{equation}
  \label{EqThmHarmonicCoho}
    0\to H^k(\oX)\oplus H^{k-1}(\oX,\pa\oX) \to \cH^k \to H^{k-1}(\pa\oX),
  \end{equation}
  where the first map is the composition of the isomorphism \eqref{EqL2CohoIsomorphism} with the map $i$ defined in Lemma~\ref{LemmaCohoMap}, and the second map is the composition of the map $r$ defined in \eqref{EqRestrictionMap} with the isomorphism $\ker(\Delta_{\pa\oX,k-1})\cong H^{k-1}(\pa\oX)$.

  Furthermore, the only resonance of $\Box_g$ in $\Im\sigma\geq 0$ is $\sigma=0$. Zero resonant states\footnote{More precisely, we mean elements of $\ker\wt\Box(0)$; the latter space equals the space of zero resonant states if the zero resonance is simple.} of the extended operator ($\Box_g$ on $\wt M$) are uniquely determined by their restriction to $X$. The space $\cK=\bigoplus_{k=0}^n\cK^k\subset\CI_{(0)}$ of these resonant states on $X$, graded by form degree, satisfying $\cK^k\supset\cH^k$, fits into the short exact sequence
  \begin{equation}
  \label{EqThmBoxCoho}
    0 \to H^k(\oX)\oplus H^{k-1}(\oX,\pa\oX) \to \cK^k \to H^{k-1}(\pa\oX) \to 0,
  \end{equation}
  with maps as above. We moreover have
  \[
    \cK^k\cap L^2 = \cH^k\cap L^2 \cong H^k(\oX)\oplus H^{k-1}(\oX,\pa\oX)
  \]
  where $L^2=L^2(X,\alpha|dh|;H\oplus H)$. More precisely then, the summand $H^k(\oX)$ in \eqref{EqThmHarmonicCoho} and \eqref{EqThmBoxCoho} corresponds to the tangential components (in the decomposition \eqref{EqFormDecomposition}) of elements of $\cH^k\cap L^2$, and the summand $H^{k-1}(\oX,\pa\oX)$ to the normal components.

  Lastly, the Hodge star operator on $M$ induces isomorphisms $\star\colon\cH^k\xra{\cong}\cH^{n-k}$ and $\star\colon\cK^k\xra{\cong}\cK^{n-k}$, $k=0,\ldots,n$.
\end{thm}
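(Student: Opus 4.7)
The plan is to assemble the preceding lemmas and propositions. The deepest technical ingredient, Proposition~\ref{PropResStates}, is already in hand; what remains is a coordination argument and a brief standalone treatment of the Hodge star symmetry.

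First I would collect the resonance information. Combining Lemma~\ref{LemmaPoles} with Propositions~\ref{PropAbsenceImPos} and \ref{PropAbsenceNonZero} shows $\ker_{\CI(\wt X)}(\td(\sigma)+\tdel(\sigma))=0$ for every $\sigma$ with $\Im\sigma\geq 0$ and $\sigma\neq 0$; since $\td(\sigma)+\tdel(\sigma)$ is an analytic Fredholm family of index $0$ that is invertible for $\Im\sigma\gg 0$, triviality of the kernel upgrades to invertibility, so the only resonance of $d+\delta$ in the closed upper half plane is $\sigma=0$. Passing to squares through Lemma~\ref{LemmaPolesBox} yields the analogous statement for $\wt\Box$. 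Lemma~\ref{LemmaPoleOrder} makes the pole of $(\td+\tdel)^{-1}$ at $0$ simple. Uniqueness of extension from $X$ to $\wt X$ of zero resonant states is the isomorphism in Lemma~\ref{LemmaPoles} (respectively Lemma~\ref{LemmaPolesBox} for $\Box$), and Proposition~\ref{PropResStates} identifies the corresponding space on $X$ with $\cH=\ker\hd\cap\ker\hdel$. To promote $\wt u\in\ker(\td+\tdel)$ to $\wt u\in\ker\td\cap\ker\tdel$ on $\wt X$: the smooth form $\td\wt u$ vanishes on $X$ (its restriction is $\hd u=0$) and satisfies $\wt\Box\td\wt u=\td\wt\Box\wt u=0$, so unique continuation at infinity on the asymptotically de Sitter side, exactly as in the proof of Lemma~\ref{LemmaPolesBox}, forces $\td\wt u\equiv 0$; the argument for $\tdel\wt u$ is identical.

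Next, the cohomological exact sequences follow by collating Propositions~\ref{PropSequenceDDel} and \ref{PropSequenceBox} with the canonical isomorphism $\cH^k_{L^2,\dR}\cong H^k(\oX)\oplus H^{k-1}(\oX,\pa\oX)$ of Proposition~\ref{PropL2Coho}: the leftmost map of \eqref{EqThmHarmonicCoho} and \eqref{EqThmBoxCoho} is then $i$ composed with this isomorphism, and the rightmost is $r$ composed with $\ker(\Delta_{\pa\oX,k-1})\cong H^{k-1}(\pa\oX)$. The inclusion $\cH^k\subset\cK^k$ comes from $\wh\Box=(\hd+\hdel)^2$, the identity $\cK^k\cap L^2=\cH^k\cap L^2$ is Corollary~\ref{CorBoxL2Kernel}, and both spaces equal the image of $i$ because $\ker r$ consists precisely of $L^2$ elements. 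The splitting $\cD=\cD_T\oplus\cD_N$ diagonalizes $\hd$ into $d_X$ on $\cD_T\cong\CI(\Xeven;\Lambda\Xeven)$ (computing absolute cohomology) and $-\alpha^{-1}d_X\alpha$ on $\cD_N$ (computing relative cohomology), giving the promised correspondence of $H^k(\oX)$ with tangential components and $H^{k-1}(\oX,\pa\oX)$ with normal components.

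Finally, for the Hodge star symmetry: $\star=\star_{\wt g}$ is $t_*$-invariant because $\wt g$ is, so it descends to an operator on the space of $t_*$-invariant forms on $\wt M$, shifting pure degree from $k$ to $n-k$, with $\star^2=\pm\Id$ on each pure degree. The standard pseudo-Riemannian identities $\star d=\pm\delta\star$ and $\star\delta=\pm d\star$ on pure degree forms imply that $\star$ sends $\ker\td$ into $\ker\tdel$ and vice versa (with appropriate degree shift), while $\star\wt\Box=\wt\Box\star$. Applied to $\wt u\in\ker\td\cap\ker\tdel$ of pure degree $k$, this gives $\star\wt u\in\ker\td\cap\ker\tdel$ of pure degree $n-k$; applied to $\wt u\in\ker\wt\Box$ of pure degree $k$, it gives $\star\wt u\in\ker\wt\Box$ of pure degree $n-k$. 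Restricting to $X$ (the phase $e^{iF\cdot 0}$ in Lemmas~\ref{LemmaPoles} and \ref{LemmaPolesBox} is trivial at $\sigma=0$) yields the claimed isomorphisms $\star\colon\cH^k\xra{\cong}\cH^{n-k}$ and $\star\colon\cK^k\xra{\cong}\cK^{n-k}$, bijectivity being automatic from $\star^2=\pm\Id$. The main obstacle throughout the assembly is the boundary pairing analysis underlying Proposition~\ref{PropResStates}: without the separation $\ker\wh\Box\cap\ker_{\CI_{(0)}}(\hd+\hdel) = \ker\hd\cap\ker\hdel$ it supplies, one could not pass from kernel statements to the $\hd$-cohomology of the complex $(\cD,\hd)$, and the cohomological interpretation would be inaccessible.
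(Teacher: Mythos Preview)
Your proposal is correct and takes essentially the same approach as the paper: both assemble the theorem from the preceding lemmas and propositions, prove the $\ker\td\cap\ker\tdel$ claim via unique continuation on the de Sitter side applied to $\td\wt u$ (which vanishes on $X$ and is annihilated by $\wt\Box$), and deduce the Hodge star symmetry from the standard identities $\star d=\pm\delta\star$, $\star\Box=\Box\star$. The only cosmetic difference is that you run the Hodge star argument on $\wt M$ with $t_*$-invariance whereas the paper works on $M$ with $t$-invariance; since $\wt g|_M=g$ and the conjugating factor $e^{iF\sigma}$ is trivial at $\sigma=0$, these are equivalent.
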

\begin{proof}
  We prove the statement about resonant states for $d+\delta$ on the extended space $\wt M$: thus, if $\wt u\in\ker(\td(0)+\tdel(0))$, then the restriction of $\wt u$ to $X$ lies in $\ker\hd(0)\cap\ker\hdel(0)$, therefore $\td(0)\wt u=-\tdel(0)\wt u$ is supported in $\wt X\setminus X$; but then $\wt\Box(0)(\td(0)\wt u)=\td(0)\tdel(0)\td(0)\wt u=0$ and the asymptotically de Sitter nature of $\wt X\setminus X$ implies $\td(0)\wt u\equiv 0$, hence also $\tdel(0)\wt u\equiv 0$, as claimed.

  The only remaining part of the statement that has not yet been proved is the last: viewing $u\in\cH^k$ as a $t$-independent $k$-form on $M=\R_t\times X$ (with the metric \eqref{EqMetricSpacetime}), we have $(d+\delta)u=0$, and for any $t$-independent $k$-form $u$ on $M$, we have that $(d+\delta)u=0$ implies $u\in\cH^k$, where we view the $t$-independent form as a form on $X$ valued in the form bundle of $M$, as explained in Section~\ref{SecPrelim}. Then $u\in\cH^k$ is equivalent to $du=0$, $\delta u=0$, which in turn is equivalent to $\delta(\star u)=0$, $d(\star u)=0$, and thus $\star u\in\cH^{n-k}$. The proof for the spaces $\cK^k$ is the same and uses $\star\Box=\Box\star$.
\end{proof}

This in particular proves Theorem~\ref{ThmIntroSummary}.

\section{Results for static de Sitter and Schwarzschild--de Sitter spacetimes}
\label{SecApplications}

We now supplement the results obtained in the previous section by high energy estimates for the inverse normal operator family and deduce expansions and decay for solutions to Maxwell's equations as well as for more general linear waves on de Sitter and Schwarzschild--de Sitter backgrounds. The rather detailed description of asymptotics in the Schwarzschild--de Sitter setting will be essential in our discussion of Kerr--de Sitter space in Section~\ref{SecKdS}.

\subsection{de Sitter space}
\label{SubsecDS}

De Sitter space is the hyperboloid $\{|x|^2-\wt t^2=1\}$ in $(n+1)$-dimensional Minkowski space, equipped with the induced Lorentzian metric. Introducing $\tau=\wt t^{-1}$ in $\wt t\geq 1$ and adding the boundary at future infinity $\tau=0$ to the spacetime, we obtain the bordified space $N=[0,1)_\tau\times Z$ with $Z=\Sph^{n-1}$, and the metric has the form
\[
  g^0=\tau^{-2}\bar g,\quad \bar g=d\tau^2-h^0(\tau,x,dx),
\]
with $h^0$ even in $\tau$, i.e.\ $h^0$ is a metric on $Z$ which depends smoothly on $\tau^2$; see Vasy~\cite[\S{4}]{VasyMicroKerrdS} for details. Thus, $g^0$ is a 0-metric in the sense of Mazzeo and Melrose \cite{MazzeoMelroseHyp}. Fixing a point $p$ at future infinity, the static model of de Sitter space, denoted $M$, is the interior of the backward light cone from $p$.\footnote{Since $g^0$ and the metric $\bar g$, which is smooth down to $\tau=0$, are conformally related, the images of null-geodesics for both metrics agree.} We introduce \emph{static coordinates} on $M$, denoted $(t,x)\in\R\times X$, where $X=B_1\subset\R^{n-1}$ is the open unit ball in $\R^{n-1}$ and $x\in\R^{n-1}$ are the standard coordinates on $\R^{n-1}$, with respect to which the induced metric on $M$ is given by
\begin{equation}
\label{EqdSMetric}
\begin{gathered}
  g = \alpha^2\,dt^2 - h, \quad \alpha=(1-|x|^2)^{1/2}, \\
  h=dx^2+\frac{1}{1-|x|^2}(x\cdot dx)^2 = \alpha^{-2}\,dr^2+r^2\,d\omega^2,
\end{gathered}
\end{equation}
using polar coordinates $(r,\omega)$ on $\R^{n-1}_x$ near $r=1$, and denoting the round metric on the unit sphere $\Sph^{n-2}$ by $d\omega^2$. We compactify $X$ to the closed unit ball $\Xeven=\overline{B_1}\subset\R^{n-1}$, and denote by $\oX$ the space which is $\Xeven$ topologically, but with $\alpha$ added to the smooth structure. In order to see that the metric $g$ fits into the framework of Theorem~\ref{ThmSummary}, note that $dr=-\alpha r^{-1}\,d\alpha$, so
\[
  h = r^{-2}\,d\alpha^2 + r^2\,d\omega^2,
\]
and $r=(1-\alpha^2)^{1/2}$, thus $h$ is an even metric on the space $\oX$ and has the form \eqref{EqMetricSpacetime2} with $\beta=1$. Using Theorem~\ref{ThmSummary}, we can now easily compute the spaces of resonances:

\begin{thm}
\label{ThmDSFull}
  On an $n$-dimensional static de Sitter spacetime, $n\geq 4$, the spaces of resonances of $\Box$ and $d+\delta$ are
  \[
    \cK^0=\cH^0=\la 1\ra,\quad \cK^n=\cH^n=\la r^{n-2}\,dt\wedge dr\wedge\omega\ra,
  \]
  where $\omega$ denotes the volume form on the round sphere $\Sph^{n-2}$. Furthermore,
  \begin{gather*}
    \cK^1=\la -\alpha^{-2}r\,dr+\alpha^{-1}\,dt \ra, \cH^1=0, \quad \cK^{n-1}=\la \star(-\alpha^{-2}r\,dr+\alpha^{-1}\,dt) \ra, \cH^{n-1}=0, \\
	\cK^k=\cH^k=0,\quad k=2,\ldots,n-2.
  \end{gather*}
\end{thm}
\begin{proof}
  We compute the cohomological data that appear in \eqref{EqThmHarmonicCoho} and \eqref{EqThmBoxCoho} using $\oX\cong\overline{B_1}$ and $\pa\oX\cong\Sph^{n-2}$:
  \begin{align*}
    \dim H^{k-1}(\pa\oX) &= \begin{cases} 0, & k=0,2,\ldots,n-2,n, \\ 1, & k=1,n-1 \end{cases} \\
    \dim H^k(\oX) &= \begin{cases} 1, & k=0 \\ 0, & 1\leq k\leq n, \end{cases} \\
    \dim H^{k-1}(\oX,\pa\oX) &= \begin{cases} 0, & 0\leq k\leq n-1 \\ 1, & k=n. \end{cases}
  \end{align*}
  Thus, we immediately deduce
  \begin{gather*}
    \dim\cK^0=\dim\cK^1=\dim\cK^{n-1}=\dim\cK^n=1, \quad \dim\cK^k=0, \quad 2\leq k\leq n-2, \\
    \dim\cH^0=\dim\cH^n=1, \quad \dim\cH^k=0, \quad 2\leq k\leq n-2.
  \end{gather*}
  Now, since $d+\delta$ annihilates constants, we find $1\in\cK^0=\cH^0$ and $\star 1\in\cK^n=\cH^n$, which in view of the 1-dimensionality of these spaces already concludes their computation.

  In order to compute $\cK^1$, notice that we have $\cK^1\cong H^0(\pa\oX)$ from \eqref{EqThmBoxCoho}, thus an element $u$ spanning $\cK^1$ has non-trivial singular components at $\alpha=0$. One is led to the guess $u=\alpha^{-1}\,d\alpha+\alpha^{-1}\,dt=-\alpha^{-2}r\,dr+\alpha^{-1}\,dt$, which is indeed annihilated by $\Box$; we will give full details for this computation in the next section when discussing Schwarzschild--de Sitter spacetimes, which in the case of vanishing black hole mass are static de Sitter spacetimes, with a point removed, see in particular the calculations following \eqref{EqOneFormAnsatz}; but since $u$ as defined above is smooth at $r=0$, we obtain $\Box u=0$ at $r=0$ as well by continuity. Since $\cK^1$ is 1-dimensional, we therefore deduce $\cK^1=\la u\ra$. One can then check that $(d+\delta)u\neq 0$, and this implies $\cH^1=0$. The corresponding statements for $\cK^{n-1}$ and $\cH^{n-1}$ are immediate consequences of this and the fact that the Hodge star operator induces isomorphisms $\cH^1\cong\cH^{n-1}$ and $\cK^1\cong\cK^{n-1}$.
\end{proof}

In particular:

\begin{thm}
\label{ThmDSFull2}
  On $4$-dimensional static de Sitter space, if $u$ is a solution of $(d+\delta)u=0$ with smooth initial data, then the degree $0$ component of $u$ decays exponentially to a constant, the degree $1,2$ and $3$ components decay exponentially to $0$, and the degree $4$ component decays exponentially to a constant multiple of the volume form. Analogous statements hold on any $n$-dimensional static de Sitter space, $n\geq 5$.
\end{thm}
\begin{proof}
  The high energy estimates for $d+\delta$ required to deduce asymptotic expansions for solutions of $(d+\delta)u=0$ follow from those of its square $\Box$, which is principally scalar and fits directly into the framework recalled in Section~\ref{SecPrelim} above, and is described in detail in~\cite[\S{2-4}]{VasyMicroKerrdS}: we can apply \cite[Theorem~2.14]{VasyMicroKerrdS}, with $R(\sigma)=\wt\Box(\sigma)^{-1}$ for the high energy estimates and then use \cite[Lemma~3.1]{VasyMicroKerrdS} (with $\cP=\Box$, $\cQ=0$, $\tau=e^{-t_*}$) to obtain the resonance expansion.
\end{proof}

By studying the space of dual resonant states, one can in fact easily show that the 0-resonance of $\Box$ is simple and thus deduce exponential decay of smooth solutions to $\Box u=0$ to an element of $\cK^k$ in all form degrees $k=0,\ldots,n$. We give details in the next section on Schwarzschild--de Sitter space.

In the present de Sitter setting, one can deduce asymptotics very easily in a different manner using the global de Sitter space picture, by analyzing indicial operators in the 0-calculus: concretely, we write differential $k$-forms (by which we mean smooth sections of the $k$-th exterior power of the $0$-cotangent bundle of $N$) as
\begin{equation}
\label{EqZeroForms}
  u = \tau^{-k}u_T + \frac{d\tau}{\tau}\wedge\tau^{1-k}u_N,
\end{equation}
where $u_T$ and $u_N$ are smooth forms on $Z$ of form degrees $k$ and $(k-1)$, respectively. One readily computes the differential $d_k$ acting on $k$-forms to be
\[
  d_k = \begin{pmatrix} \tau d_Z & 0 \\ -k+\tau\pa_\tau & -\tau d_Z \end{pmatrix}.
\]
Furthermore, by the choice of basis in \eqref{EqZeroForms}, the inner product on $k$-forms induced by $g^0$ is given by
\[
  G_k^0 = \begin{pmatrix} (-1)^kH_k^0 & 0 \\ 0 & (-1)^{k-1}H_{k-1}^0 \end{pmatrix}.
\]
Using that the volume density is $|dg^0|=\tau^{-n}\,d\tau|dh^0|$, we compute the codifferential $\delta_k$ acting on $k$-forms to be
\[
  \delta_k = \begin{pmatrix} -\tau\delta_Z & -(k-1) + \tau^{n-1}\tau\pa_\tau^*\tau^{1-n} \\ 0 & \tau\delta_Z \end{pmatrix}
   = \begin{pmatrix} -\tau\delta_Z & n-k-\tau\pa_\tau + \cO_{\CI(N)}(\tau) \\ 0 & \tau\delta_Z \end{pmatrix},
\]
where $\pa_\tau^*$ is the $L^2(N,|d\bar g|)$-adjoint (suppressing the bundles in the notation) of $\pa_\tau$, and we use the even-ness of $g^0$ in the second step to deduce $\pa_\tau^*=-\pa_\tau+\cO_{\CI(N)}(\tau)$. Therefore, the indicial roots of $d+\delta$ on the degree $k$-part of the form bundle are $k$ and $n-k$.

Next, for $0\leq k\leq n$, we compute the Hodge d'Alembertian:\footnote{We deal with the cases $k=0$ and $k=n$ simultaneously with $1\leq k\leq n-1$ by implicitly assuming that for $k=0$, only the $(1,1)$-part of this operator acts on $0$-forms, and for $k=n$, only the $(2,2)$-part acts on $n$-forms.}
\begin{align*}
  \Box_k & = d_{k-1}\delta_k + \delta_{k+1}d_k \\
    & =
	\begin{pmatrix}
	  -\tau d_Z\tau\delta_Z-\tau\delta_Z\tau d_Z - P_k & \tau d_Z \\
	  -\tau\delta_Z & -\tau d_Z\tau\delta_Z-\tau\delta_Z\tau d_Z - P_{k-1}
	\end{pmatrix}
	+ \cO_{\Diff_0^1}(\tau)
\end{align*}
where $P_k = (\tau\pa_\tau)^2-(n-1)\tau\pa_\tau + k(n-k-1)$. Thus, the indicial polynomial of $\Box_k$ is
\[
  I(\Box_k)(s) = \begin{pmatrix} s^2-(n-1)s+k(n-k-1) & 0 \\ 0 & s^2-(n-1)s+(k-1)(n-k) \end{pmatrix}.
\]
On tangential forms, the indicial roots of $\Box_k$ are therefore $k,n-1-k$, and on normal forms, they are $k-1,n-k$. We thus have:
\begin{center}
\begin{tabular}{l|l|l|l|l|l}
  form degree & $0$ & $1$ & $2\leq k\leq n-2$ & $n-1$ & $n$ \\
  \hline\hline
  tgt.\ ind.\ roots & $0,n-1$ & $1,n-2$ & $k,n-1-k$ & $0,n-1$ & $-$ \\
  \hline
  norm.\ ind.\ roots & $-$ & $0,n-1$ & $k-1,n-k$ & $1,n-2$ & $0,n-1$
\end{tabular}
\end{center}
Hence in particular, all roots are $\geq 0$, and $0$ is never a double root. Thus, the arguments of \cite{VasyWaveOndS} (which are in the scalar setting, but work in the current setting as well with only minor modifications) show that solutions $u$ to the wave equation on differential $k$-forms on $N$ with smooth initial data at $\tau=\tau_0>0$ decay exponentially (in $-\log\tau$) if $0$ is not an indicial root, and decay to a stationary state if $0$ is an indicial root.\footnote{Of course, since we know all indicial roots, we could be much more precise in describing the asymptotics, but we only focus on the $0$-resonance here.} Explicitly, scalar waves decay to a smooth function on $Z$, $1$-form waves decay to an element of $\frac{d\tau}{\tau}\CI(Z)$, $k$-form waves decay exponentially to $0$ for $2\leq k\leq n-2$, $(n-1)$-form waves decay to an element of $\CI(Z;\Lambda^{n-1}Z)$, and $n$-form waves finally decay to an element of $\frac{d\tau}{\tau}\wedge\CI(Z;\Lambda^{n-1}Z)$.

Since the static model of de Sitter space arises by blowing up a point $p$ at future infinity of compactified de Sitter space and considering the backward light cone from $p$, we can find the resonant states for the static model by simply finding the space of restrictions to $p$ of the asymptotic states described above; but since the fibers of $\Lambda^0(Z)$ and $\Lambda^{n-1}(Z)$ are 1-dimensional, hence we have reproved Theorem~\ref{ThmDSFull}.

We point out that if one wants to analyze differential form-valued waves or solutions to Maxwell's equations on Schwarzschild--de Sitter space, there is no global picture (in the sense of a 0-differential problem) as in the de Sitter case. Thus, the direct approach outlined in the proof of Theorem~\ref{ThmDSFull} is the only possible one in this case, and it is very instructive as it shows even more clearly how the cohomological interpretation of the space of zero resonant states can be used very effectively.

\subsection{Schwarzschild--de Sitter space}
\label{SubsecSDS}

The computation of resonant states for Schwarzschild--de Sitter spacetimes of any dimension is no more difficult than the computation in $4$ dimensions, thus we directly treat the general case of $n\geq 4$ spacetime dimensions. Recall that the metric of $n$-dimensional Schwarzschild--de Sitter space $M=\R_t\times X$, $X=(r_-,r_+)_r\times\Sph^{n-2}_\omega$, with $r_\pm$ defined below, is given by
\[
  g = \mu\,dt^2-(\mu^{-1}\,dr^2+r^2\,d\omega^2),
\]
where $d\omega^2$ is the round metric on the sphere $\Sph^{n-2}$, and $\mu=1-\frac{2M_\bullet}{r^{n-3}}-\lambda r^2$, $\lambda=\frac{2\Lambda}{(n-2)(n-1)}$, where the black hole mass $M_\bullet$ and the cosmological constant $\Lambda$ are positive. We assume that
\begin{equation}
\label{EqSDSNondegeneracy}
  M_\bullet^2\lambda^{n-3}<\frac{(n-3)^{n-3}}{(n-1)^{n-1}},
\end{equation}
which guarantees that $\mu$ has two unique positive roots $0<r_-<r_+$. Indeed, let $\wt\mu=r^{-2}\mu=r^{-2}-2M_\bullet r^{1-n}-\lambda$. Then $\wt\mu' = -2r^{-n}(r^{n-3}-(n-1)M_\bullet)$ has a unique positive root $r_p=[(n-1)M_\bullet]^{1/(n-3)}$, $\wt\mu'(r)>0$ for $r\in(0,r_p)$ and $\wt\mu'(r)<0$ for $r>r_p$; moreover, $\wt\mu(r)<0$ for $r>0$ small and $\wt\mu(r)\to-\lambda<0$ as $r\to\infty$, thus the existence of the roots $0<r_-<r_+$ of $\wt\mu$ is equivalent to the requirement $\wt\mu(r_p)=\frac{n-3}{n-1}r_p^{-2}-\lambda>0$, which leads precisely to the inequality \eqref{EqSDSNondegeneracy}.

Define $\alpha=\mu^{1/2}$, thus $d\alpha=\frac{1}{2}\mu'\alpha^{-1}\,dr$, and
\[
  \beta_\pm:=\mp\frac{2}{\mu'(r_\pm)}>0,
\]
then the metric $g$ can be written as
\begin{equation}
\label{EqSdSMetric}
  g = \alpha^2\,dt^2-h,\quad h=\wt\beta_\pm^2\,d\alpha^2+r^2\,d\omega^2,
\end{equation}
where $\wt\beta_\pm=\mp 2/\mu'(r)$. Thus, if we let $\Xeven=[r_-,r_+]_r\times\Sph^{n-2}_\omega$ with the standard smooth structure, then $\wt\beta_\pm=\beta_\pm$ modulo $\alpha^2\CI(\Xeven)$, where we note that $r$ is a smooth function of $\mu$, thus an \emph{even} function of $\alpha$, near $r=r_\pm$ in view of $\mu'(r_\pm)\neq 0$. The manifold $\oX$ is $\Xeven$ topologically, but with smooth functions of $\alpha=\mu^{1/2}$ added to the smooth structure. We denote $Y=\pa\oX=\Sph^{n-2}\sqcup\Sph^{n-2}$.

By the analysis in Section~\ref{SecPrelim}, all zero resonant states $u$, written in the form \eqref{EqBundleDecWarped} near $Y$, lie in the space $\CI_{(0)}$, defined in \eqref{EqDefCIsigma}. In the current setting, it is more natural to write differential forms as
\begin{equation}
\label{EqSDSDecomposition}
  u=u_{TT}+\alpha^{-1}\,dr\wedge u_{TN} + \alpha\,dt\wedge u_{NT} + \alpha\,dt\wedge\alpha^{-1}\,dr\wedge u_{NN},
\end{equation}
since $\alpha^{-1}\,dr$ has squared norm $-1$ (with respect to the metric $g$). We compute how the matching condition on the singular terms of $u$, encoded in the $\beta_\pm\alpha^{-1}$ entry of the matrix $\sC$, changes when we thus change the basis of the form bundle: namely, we have $\beta_\pm\alpha^{-1}\,d\alpha=(\mp 1+\alpha^2\CI(\Xeven))\alpha^{-1}\alpha^{-1}\,dr$; thus, for $u$ written as in \eqref{EqSDSDecomposition}, we have
\[
  u\in\CI_{(0)} \iff
  \begin{pmatrix}
    u_{TT}\\u_{TN}\\u_{NT}\\u_{NN}
  \end{pmatrix}
   \in \sC_\pm
     \begin{pmatrix}
	   \CI(\Xeven;\Lambda\Sph^{n-2})\\
	   \CI(\Xeven;\Lambda\Sph^{n-2})\\
	   \CI(\Xeven;\Lambda\Sph^{n-2})\\
	   \CI(\Xeven;\Lambda\Sph^{n-2})
	 \end{pmatrix}
\]
near $r=r_\pm$, where
\begin{equation}
\label{EqSDSMatchingCondition}
  \sC_\pm
    =\begin{pmatrix}
         1&0&0&0\\
		 0&\alpha&\mp\alpha^{-1}&0\\
		 0&0&\alpha^{-1}&0\\
		 0&0&0&1
	  \end{pmatrix}.
\end{equation}
We now proceed to compute the explicit form of the operators $d_p,\delta_p$ and $\Box_p$, where the subscript $p$ indicates the form degree on which the operators act. First, we recall \eqref{EqSpacetimeD} and \eqref{EqSpacetimeDel} in the form
\[
  d_p=\begin{pmatrix} d_{X,p} & 0 \\ \alpha^{-1}\pa_t & -\alpha^{-1}d_{X,p-1}\alpha \end{pmatrix},\quad
  \delta_p=\begin{pmatrix} -\alpha^{-1}\delta_{X,p}\alpha & -\alpha^{-1}\pa_t \\ 0 & \delta_{X,p-1} \end{pmatrix},
\]
and these operators act on forms $u=u_T+\alpha\,dt\wedge u_N$, with $u_T$ and $u_N$ differential forms on $X$. Writing forms on $X$ as $v=v_T+\alpha^{-1}\,dr\wedge v_N$, we have
\begin{equation}
\label{EqSDSDiff}
  d_{X,p} = \begin{pmatrix} d_{\Sph^{n-2},p} & 0 \\ \alpha\pa_r & -d_{\Sph^{n-2},p-1} \end{pmatrix}.
\end{equation}
In order to compute the codifferential, we observe that the volume density on $X$ induced by $h$ is given by $\alpha^{-1}r^{n-2}\,dr|d\omega|$, while the induced inner product on the fibers on the bundle of $p$-forms is
\[
  H_p = \begin{pmatrix} r^{-2p}\Omega_p & 0 \\ 0 & r^{-2(p-1)}\Omega_{p-1} \end{pmatrix},
\]
where $\Omega_p$ is the fiber inner product on the $p$-form bundle on $\Sph^{n-2}$. Therefore,
\begin{equation}
\label{EqSDSPaRAdjoint}
\begin{split}
  \delta_{X,p} &= \begin{pmatrix} r^{-2}\delta_{\Sph^{n-2},p} & \pa_{r,p-1}^* \\ 0 & -r^{-2}\delta_{\Sph^{n-2},p-1} \end{pmatrix}, \\
  \pa_{r,p-1}^* &= -\alpha r^{-(n-2)}r^{2(p-1)}\pa_r r^{-2(p-1)}r^{n-2}.
\end{split}
\end{equation}
We obtain:
\begin{lemma}
\label{LemmaSDSOps}
  In the bundle decomposition \eqref{EqSDSDecomposition}, we have
  \begin{equation}
  \label{EqSDSD}
    d_p
      = \begin{pmatrix}
  	    d_{\Sph^{n-2},p} & 0 & 0 & 0 \\
  		\alpha\pa_r & -d_{\Sph^{n-2},p-1} & 0 & 0 \\
  		\alpha^{-1}\pa_t & 0 & -d_{\Sph^{n-2},p-1} & 0 \\
  		0 & \alpha^{-1}\pa_t & -\pa_r\alpha & d_{\Sph^{n-2},p-2}
  	  \end{pmatrix}
  \end{equation}
  and
  \begin{equation}
  \label{EqSDSDel}
    \delta_p
      = \begin{pmatrix}
  	    -r^{-2}\delta_{\Sph^{n-2},p} & -\alpha^{-1}\pa_{r,p-1}^*\alpha & -\alpha^{-1}\pa_t & 0 \\
  		0 & r^{-2}\delta_{\Sph^{n-2},p-1} & 0 & -\alpha^{-1}\pa_t \\
  		0 & 0 & r^{-2}\delta_{\Sph^{n-2},p-1} & \pa_{r,p-2}^* \\
  		0 & 0 & 0 & -r^{-2}\delta_{\Sph^{n-2},p-2}
  	  \end{pmatrix}.
  \end{equation}
  Moreover,
  \begin{equation}
  \label{EqSDSBox}
  \begin{split}
  -&r^2\Box_p=
    \begin{pmatrix}
	  \Delta_{\Sph^{n-2},p} & -2\alpha r d_{p-1} & 0 & 0 \\
	  -2\alpha r^{-1}\delta_p & \Delta_{\Sph^{n-2},p-1} & -r^2\mu^{-1}\mu'\pa_t & 0 \\
	  0 & -r^2\mu^{-1}\mu'\pa_t & \Delta_{\Sph^{n-2},p-1} & -2\alpha r d_{p-2} \\
	  0 & 0 & -2\alpha r^{-1}\delta_{p-1} & \Delta_{\Sph^{n-2},p-2}
	\end{pmatrix} \\
  &\quad +
    \begin{pmatrix}
	  r^2\alpha^{-1}\pa_{r,p}^*\alpha^2\pa_r & 0 & 0 & 0 \\
	  0 & r^2\alpha\pa_r\alpha^{-1}\pa_{r,p-1}^*\alpha & 0 & 0 \\
	  0 & 0 & r^2\pa_{r,p-1}^*\pa_r\alpha & 0 \\
	  0 & 0 & 0 & r^2\pa_r\alpha\pa_{r,p-2}^*
	\end{pmatrix} \\
  &\quad +
    \begin{pmatrix}
	  r^2\mu^{-1}\pa_t^2 & 0 & 0 & 0 \\
	  0 & r^2\mu^{-1}\pa_t^2 & 0 & 0 \\
	  0 & 0 & r^2\mu^{-1}\pa_t^2 & 0 \\
	  0 & 0 & 0 & r^2\mu^{-1}\pa_t^2
	\end{pmatrix}.
  \end{split}
  \end{equation}
\end{lemma}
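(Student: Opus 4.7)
The lemma has three parts, and I would prove them in the order stated. For \eqref{EqSDSD} and \eqref{EqSDSDel}, the proof is a direct substitution: plug the spherical decomposition formulas \eqref{EqSDSDiff}, \eqref{EqSDSPaRAdjoint} into the warped product formulas \eqref{EqSpacetimeD}, \eqref{EqSpacetimeDel}. The $4\times 4$ matrix structure in the decomposition \eqref{EqSDSDecomposition} arises from nesting the two $2\times 2$ block decompositions (spacetime into tangential and normal in $dt$; space into tangential and normal in $dr$). Two blocks require care: for $d_p$, the $(2,2)$ block $-\alpha^{-1}d_{X,p-1}\alpha$ is expanded using that $\alpha=\alpha(r)$ commutes with $d_{\Sph^{n-2},\bullet}$, producing the entry $-\pa_r\alpha$ in position $(4,3)$ from the $\alpha\pa_r$ term of $d_{X,p-1}$ combined with the $\alpha^{\pm 1}$ factors; for $\delta_p$, the analogous conjugation $-\alpha^{-1}\delta_{X,p}\alpha$ yields the entry $-\alpha^{-1}\pa_{r,p-1}^*\alpha$ in position $(1,2)$. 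The $\alpha^{-1}\pa_t$ entries are scalar on the spherical bundle and land in positions $(3,1),(4,2)$ for $d_p$ and $(1,3),(2,4)$ for $\delta_p$.

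For \eqref{EqSDSBox}, I compute $\Box_p = d_{p-1}\delta_p + \delta_{p+1}d_p$ by multiplying the matrices from \eqref{EqSDSD} with $p$ replaced by $p-1$ against \eqref{EqSDSDel}, and adding the analogous product $\delta_{p+1}d_p$. On the diagonal, products of spherical entries contribute $r^{-2}(d_{\Sph^{n-2},k-1}\delta_{\Sph^{n-2},k} + \delta_{\Sph^{n-2},k+1}d_{\Sph^{n-2},k}) = r^{-2}\Delta_k$ for $k\in\{p,p-1,p-1,p-2\}$; products of the radial entries $\alpha\pa_r$ and $\pa_{r,\bullet}^*$ give the second matrix of \eqref{EqSDSBox}; and products of the two $\alpha^{-1}\pa_t$ entries give $\mu^{-1}\pa_{tt}$ on the diagonal, producing the third matrix after multiplication by $-r^2$. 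The off-diagonal cross terms come in two flavors: the entries $-2\alpha r\,d_{\Sph^{n-2},\bullet}$ in positions $(1,2),(3,4)$ arise from the index-dependence of $\pa_{r,\bullet}^*$, captured by the identity
\[
  \pa_{r,k}^* - \pa_{r,k-1}^* = 2\alpha r^{-1},
\]
which follows from directly expanding \eqref{EqSDSPaRAdjoint}; the dual entries $-2\alpha r^{-1}\delta_{\Sph^{n-2},\bullet}$ in positions $(2,1),(4,3)$ arise from the commutator $[\alpha\pa_r,r^{-2}]\delta_{\Sph^{n-2},\bullet} = -2\alpha r^{-3}\delta_{\Sph^{n-2},\bullet}$, where the $\pa_r$-on-the-form contributions from the two products cancel and only the derivative of $r^{-2}$ survives. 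Finally, the cross terms $-r^2\mu^{-1}\mu'\pa_t$ in positions $(2,3),(3,2)$ come from $[\pa_r,\alpha^{-1}]\pa_t = -\tfrac{\mu'}{2\alpha^3}\pa_t$ and its adjoint, combined with $\alpha^2=\mu$.

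All remaining off-diagonal entries of $-r^2\Box_p$ (positions $(1,3),(1,4),(2,4),(3,1),(4,1),(4,2)$) must cancel. Each such entry receives one contribution from $d_{p-1}\delta_p$ and one from $\delta_{p+1}d_p$, and the two differ only in sign; the cancellation follows from the commutativity $[\pa_t,d_{\Sph^{n-2},\bullet}]=[\pa_t,\delta_{\Sph^{n-2},\bullet}]=0$, reflecting the stationarity of the metric. The main technical obstacle is simply the bookkeeping across sixteen matrix entries and the careful tracking of commutators between $\alpha$, $r^{\pm j}$, and the operators $\pa_r$, $\pa_{r,\bullet}^*$; the computation is elementary but tedious, with no conceptual difficulty beyond the radial identity above and its $\delta$-side analog.
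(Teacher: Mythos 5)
Your proposal is correct and follows exactly the route the paper intends (the lemma is stated there as a direct consequence of \eqref{EqSpacetimeD}, \eqref{EqSpacetimeDel}, \eqref{EqSDSDiff}, \eqref{EqSDSPaRAdjoint}, with no further argument given): nest the two $2\times 2$ block decompositions, then multiply matrices for $\Box_p=d_{p-1}\delta_p+\delta_{p+1}d_p$. Your key identity $\pa_{r,k}^*-\pa_{r,k-1}^*=2\alpha r^{-1}$ checks out from $\pa_{r,k}^*=-\alpha\pa_r-(n-2-2k)\alpha r^{-1}$, and the only (harmless) imprecision is that the entries in positions $(1,4)$ and $(4,1)$ vanish because both contributions are identically zero, not by a sign cancellation.
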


We can now compute the spaces $\cK$ and $\cH$ of zero resonances for $\Box$ and $d+\delta$ and deduce asymptotics for solutions of $(d+\delta)u=0$:
\begin{thm}
\label{ThmSDSDDel}
  On an $n$-dimensional Schwarzschild--de Sitter spacetime, $n\geq 4$, there exist two linearly independent $1$-forms $u_\pm = f_{1,\pm}(r)\mu^{-1}\,dr + f_{2,\pm}(r)\,dt \in \cK^1=\ker\wh\Box_1\subset \CI_{(0)}$,\footnote{The forms $u_\pm$ have a simple explicit form, see \eqref{EqOneFormAnsatz} and Footnote~\ref{FootOneFormResonance}.} and we then have:
  \[
    \cK^0=\cH^0=\la 1\ra,\quad \cK^n=\cH^n=\la r^{n-2}\,dt\wedge dr\wedge\omega\ra,
  \]
  where $\omega$ denotes the volume form on the round sphere $\Sph^{n-2}$. Furthermore,
  \begin{gather*}
    \cK^1=\la u_+,u_-\ra,\cH^1=0,\quad \cK^{n-1}=\la\star u_+,\star u_-\ra,\cH^{n-1}=0, \\
	\cK^k=\cH^k=0,\quad k=3,\ldots,n-3.
  \end{gather*}
  For $n=4$,
  \[
    \cK^2=\cH^2 = \la\omega,r^{-2}\,dt\wedge dr\ra,
  \]
  while for $n>4$,
  \[
    \cK^2=\cH^2 = \la r^{-(n-2)}\,dt\wedge dr\ra,\quad \cK^{n-2}=\cH^{n-2} = \la \omega \ra.
  \]
\end{thm}

\begin{proof}
  First, we observe that $H^k(\oX)\cong H^k(\Sph^{n-2})$, $H^{k-1}(\oX,\pa\oX)\cong H^{n-k}(\oX)\cong H^{n-k}(\Sph^{n-2})$ by Poincar\'e duality, and $H^{k-1}(\pa\oX)\cong H^{k-1}(\Sph^{n-2})\oplus H^{k-1}(\Sph^{n-2})$. Thus, the short exact sequence \eqref{EqThmBoxCoho} immediately gives the dimensions of the spaces $\cK^k$, and \eqref{EqThmHarmonicCoho} gives the dimensions of $\cH^k$ for all values of $k$ except $k=1$ and $k=n-1$.
  
  We now compute $\cH$ and $\cK$ in the case $n=4$. For $k=0$, the short exact sequence \eqref{EqThmBoxCoho} reads $0\to H^0(\oX)\oplus 0\to\cK^0\to 0\to 0$, and since $H^0(\oX)=\la[1]\ra$, this suggests $1$ as a resonant state for $\Box$ on $0$-forms (i.e.\ functions), and indeed $\Box 1=0$, hence $\cK^0=\la 1\ra$. Theorem~\ref{ThmSummary} also shows that $\cH^0=\cK^0$. Then we immediately obtain $\cH^4=\cK^4=\la\star 1\ra=\la r^2\,dt\wedge dr\wedge\omega\ra$.

  Next, we treat the form degree $k=2$. Then \eqref{EqThmBoxCoho} reads $0\to H^2(\oX)\oplus H^1(\oX,\pa\oX)\to\cK^2\to 0\to 0$. Now $H^2(\oX)=\la[\omega]\ra$, and a generator of $H^1(\oX,\pa\oX)$ is given by the Poincar\'e dual of $\omega$ (which generates $H^2(\oX)$). This suggests the ansatz $u=f(r)\omega$ for an element of $\cK^2=\cH^2$ (the latter equality following from \eqref{EqThmHarmonicCoho}), and then $\star u$ will be the second element of a basis of $\cK^2$. Now, in the decomposition \eqref{EqSDSDecomposition}, we compute using Lemma~\ref{LemmaSDSOps} that $\hdel_2(0)u=0$ for $u=f(r)\omega$, and
  \[
    \hd_2(0)u = \hd_2(0)\begin{pmatrix}f(r)\omega\\0\\0\\0\end{pmatrix} = \begin{pmatrix}0 \\ \alpha f'(r)\omega \\ 0 \\ 0\end{pmatrix},
  \]
  which vanishes precisely if $f(r)$ is constant.

  The analysis of resonant states in form degree $k=1$ is just a bit more involved. Since \eqref{EqThmBoxCoho} now reads $0\to 0\oplus 0\to\cK^1\to H^0(\Sph^2\sqcup\Sph^2)\to 0$, every non-trivial element $u$ of $\cK^1$ fails to be in $L^2(\alpha|dh|)$, and in fact the singular behavior is expected to be $u=\sC_\pm\wt u$ with $\wt u_{NT}|_{r=r_\pm}=c_\pm\in\C$, since $H^0(\Sph^2\sqcup\Sph^2)$ is generated by locally constant functions, which are therefore constant on $r=r_-$ as well as on $r=r_+$. We thus make the ansatz
  \begin{equation}
  \label{EqOneFormAnsatz}
    u=\alpha^{-1}f_1(r)\,\alpha^{-1}\,dr+\alpha\,dt\wedge \alpha^{-1}f_2(r).
  \end{equation}
  We then compute
  \[
    -\wh\Box_1(0)u=\begin{pmatrix} 0 \\ \alpha\pa_r\alpha^{-1}\pa_{r,0}^* f_1 \\ \pa_{r,0}^*\pa_r f_2 \\ 0 \end{pmatrix},
  \]
  and by definition of $\pa_{r,p}^*$ in \eqref{EqSDSPaRAdjoint}, this vanishes if and only if $f_1$ and $f_2$ satisfy the ODEs
  \begin{align*}
    \pa_r r^{-2}\pa_r r^2 f_1 &= 0, \\
	r^{-2}\pa_r r^2\pa_r f_2 &= 0.
  \end{align*}
  The general form of the solution is\footnote{\label{FootOneFormResonance}On $n$-dimensional Schwarzschild--de Sitter space, the exponents $2$ and $-2$ in these ODEs get replaced by $n-2$ and $2-n$, and the general forms of the solutions are $f_1(r)=f_{11}r+f_{12}r^{2-n}$ and $f_2(r)=f_{21}+f_{22}r^{3-n}$. The subsequent analysis of the matching conditions goes through with obvious modifications.}
  \[
  \begin{split}
    f_1(r) &= f_{11}r + f_{12}r^{-2}, \\
    f_2(r) &= f_{21} + f_{22}r^{-1},
  \end{split}
  \]
  $f_{jk}\in\C$, $j,k=1,2$. Now recall that resonant states are elements of $\CI_{(0)}$ and thus satisfy a matching condition in the singular components, which is captured by the matrix \eqref{EqSDSMatchingCondition}. Concretely, we require $f_2(r_-)=f_1(r_-)$ and $f_2(r_+)=-f_1(r_+)$; in terms of $f_{jk}$, $j,k=1,2$, these conditions translate into
  \[
    \begin{pmatrix}
	  r_- & r_-^{-2} & -1 & -r_-^{-1} \\
	  r_+ & r_+^{-2} & 1 & r_+^{-1}
	\end{pmatrix}
	\begin{pmatrix}
	  f_{11} \\ f_{12} \\ f_{21} \\ f_{22}
	\end{pmatrix}
	=
	\begin{pmatrix}
	  0 \\ 0
	\end{pmatrix}.
  \]
  Since the $2\times 4$ matrix on the left has rank $2$, we get a $2$-dimensional space of solutions. In fact, it is easy to see that we can freely specify the values $f_1(r_-)$ and $f_1(r_+)$, and $f_1$ and $f_2$ are then uniquely determined. To be specific, we can for instance define $u_+\in\cK^1$ to be the 1-form with $f_1(r_-)=0,f_1(r_+)=1$, and $u_-\in\cK^1$ to be the 1-form with $f_1(r_-)=1,f_1(r_+)=0$, and we then have $\cK^1=\la u_+,u_-\ra$, as claimed.

  Next, since $\cH^1\subset\cK^1$, computing $\cH^1$ simply amounts to finding all linear combinations of $u_-$ and $u_+$ which are annihilated by both $\hd_1(0)$ and $\hdel_1(0)$. But
  \[
    \hd_1(0)\begin{pmatrix}0 \\ \alpha^{-1}f_1(r) \\ \alpha^{-1}f_2(r) \\ 0 \end{pmatrix} = \begin{pmatrix}0\\0\\0\\-\pa_r f_2\end{pmatrix}=0
  \]
  requires $f_2$ to be constant, and
  \[
    \hdel_1(0)\begin{pmatrix}0 \\ \alpha^{-1}f_1(r) \\ \alpha^{-1}f_2(r) \\ 0 \end{pmatrix} = \begin{pmatrix}-\alpha^{-1}\pa_{r,0}^*f_1\\0\\0\\0\end{pmatrix}=0
  \]
  implies $r^{-2}\pa_r r^2 f_1=0$, hence $f_1(r)=f_1(r_-)(r/r_-)^{-2}$. The matching condition requires $f_1(r_+)=f_1(r_-)(r_+/r_-)^{-2}=-f_2(r_+)=-f_2(r_-)=-f_1(r_-)$ and is therefore only satisfied if $f_1(r_-)=0$, which implies $f_1\equiv 0$ and $f_2\equiv 0$. This shows that $\cH^1=0$ and finishes the computation of the spaces of resonances for $n=4$. The computation for spacetime dimensions $n\geq 5$ is completely analogous.
\end{proof}

In particular:
\begin{thm}
\label{ThmSDSDDel2}
  On $4$-dimensional Schwarzschild--de Sitter space, if $u$ is a solution of $(d+\delta)u=0$ with smooth initial data, then the degree $0$ component of $u$ decays exponentially to a constant, the degree $1$ and degree $3$ components decay exponentially to $0$, the degree $2$ component decays exponentially to a linear combination of $\omega$ and $r^{-2}\,dt\wedge dr$, and the degree $4$ component decays exponentially to a constant multiple of the volume form. Analogous statements hold on any $n$-dimensional Schwarzschild--de Sitter space, $n\geq 5$.
\end{thm}
\begin{proof}
  This follows from the above computations combined with high energy estimates for $d+\delta$, which follow from those for $\Box$, and Lemma~\ref{LemmaPoleOrder}.\footnote{If $(\hd(\sigma)+\hdel(\sigma))^{-1}$ had a second order pole at $0$, then solutions to $(d+\delta)u=0$ would generically blow up linearly; the simplicity of the pole ensures that solutions stay bounded with the asymptotic stationary state given by an element of $\cH$.} Once we check the normally hyperbolic nature of the trapping and show that the subprincipal symbol of $\Box$ (or a conjugated version thereof), relative to a \emph{positive definite} fiber inner product, at the trapping is smaller than $\numin/2$, where $\numin$ is the minimal expansion rate in the normal direction at the trapped set, we can use Dyatlov's result \cite{DyatlovSpectralGaps} to obtain a spectral gap below the real line, i.e.\ the absence of resonances in a small strip below the reals, which combines with the general framework of \cite{VasyMicroKerrdS} to yield the desired resonance expansion of solutions with exponentially decaying error terms: concretely, the semiclassical estimate \cite[Theorem~1]{DyatlovSpectralGaps}, in the microlocalized form given in \cite[Theorem~4.7]{HintzVasyQuasilinearKdS}, can be combined with the semiclassical estimates at the horizons (i.e.\ radial points) given in~\cite[Propositions~2.10 and 2.11]{VasyMicroKerrdS}, the real principal type propagation estimates elsewhere on the semiclassical characteristic set, as well as semiclassical elliptic estimates away from the characteristic set; see \cite[\S\S{4.4, 5}]{HintzVasyQuasilinearKdS} for further details.
  
  The dynamics of the Hamilton flow at the trapping only depend on properties of the scalar principal symbol $g$ of $\Box$. For easier comparison with \cite{DyatlovWaveAsymptotics,VasyMicroKerrdS,WunschZworskiNormHypResolvent}, we consider the operator $\cP=-r^2\Box$. We take the Fourier transform in $-t$, obtaining a family of operators on $X$ depending on the dual variable $\tau$, and then do a semiclassical rescaling, multiplying $\wh\cP$ by $h^2$, giving a second order semiclassical differential operator $P_h$, with $h=|\tau|^{-1}$, and we then define $z=h\tau$. Introduce coordinates on $T^*X$ by writing $1$-forms as $\xi\,dr+\eta\,d\omega$, and let
  \[
    \Delta_r := r^2\mu = r^2(1-\lambda r^2)-2M_\bullet r^{5-n},
  \]
  then the semiclassical principal symbol $p$ of $P_h$ is
  \[
    p = \Delta_r\xi^2 - \frac{r^4}{\Delta_r}z^2 + |\eta|^2,
  \]
  and correspondingly the Hamilton vector field is
  \[
    H_p = 2\Delta_r\xi\pa_r - \Bigl(\pa_r\Delta_r\xi^2-\pa_r\Bigl(\frac{r^4}{\Delta_r}\Bigr)z^2\Bigr)\pa_\xi + H_{|\eta|^2}
  \]
  We work with real $z$, hence $z=\pm 1$. We locate the trapped set: if $H_p r=2\Delta_r\xi=0$, then $\xi=0$, in which case $H_p^2 r=2\Delta_r H_p\xi=2\Delta_r\pa_r(r^4/\Delta_r)z^2$. Recall the definition of the function $\wt\mu=\mu/r^2=\Delta_r/r^4$, then we can rewrite this as $H_p^2r=-2\Delta_r\wt\mu^{-2}(\pa_r\wt\mu)z^2$. We have already seen that $\pa_r\wt\mu$ has a single root $r_p\in(r_-,r_+)$, and $(r-r_p)\pa_r\wt\mu<0$ for $r\neq r_p$. Therefore, $H_p^2r=0$ implies (still assuming $H_p r=0$) $r=r_p$. Thus, the only trapping occurs in the cotangent bundle over $r=r_p$: indeed, define $F(r)=(r-r_p)^2$, then $H_pF=2(r-r_p)H_p r$ and $H_p^2F=2(H_p r)^2+2(r-r_p)H_p^2 r$. Thus, if $H_p F=0$, then either $r=r_p$, in which case $H_p^2 F=2(H_p r)^2>0$ unless $H_p r=0$, or $H_p r=0$, in which case $H_p^2 F=2(r-r_p)H_p^2 r>0$ unless $r=r_p$. So $H_p F=0,p=0$ implies either $H_p^2 F>0$ or $r=r_p,H_p r=0$. Therefore, the trapped set in $T^*X$ is given by
  \[
    (r,\omega;\xi,\eta) \in \Gamma_\semi := \Bigl\{(r_p,\omega;0,\eta) \colon \frac{r^4}{\Delta_r}z^2=|\eta|^2\Bigr\},
  \]
  and $F$ is an escape function. The linearization of the $H_p$-flow at $\Gamma_\semi$ in the normal coordinates $r-r_p$ and $\xi$ equals
  \begin{align*}
    H_p\begin{pmatrix}r-r_p \\ \xi\end{pmatrix} &= \begin{pmatrix} 0 & 2r_p^4\wt\mu|_{r=r_p} \\ 2(n-3)r_p^{-4}(\wt\mu|_{r=r_p})^{-2} z^2 & 0 \end{pmatrix}\begin{pmatrix} r-r_p \\ \xi \end{pmatrix} \\
	 & \quad + \cO(|r-r_p|^2+|\xi|^2),
  \end{align*}
  where we used $\pa_{rr}\wt\mu|_{r=r_p} = -2(n-3)r_p^{-4}$, which gives $\pa_r\wt\mu=-2(n-3)r_p^{-4}(r-r_p)+\cO(|r-r_p|^2)$. The eigenvalues of the linearization are therefore equal to $\pm\numin$, where
  \[
    \numin = 2r_p\left(\frac{n-1}{1-\frac{n-1}{n-3}r_p^2\lambda}\right)^{1/2}>0.
  \]
  The expansion rate of the flow within the trapped set is $0$ by spherical symmetry, since integral curves of $H_p$ on $\Gamma_\semi$ are simply unit speed geodesics of $\Sph^{n-2}$. This shows the normal hyperbolicity (in fact, $r$-normal hyperbolicity for every $r$) of the trapping.
  
  It remains to bound the imaginary part of $\cP=-r^2\Box_g$ in terms of $\numin$ in order to obtain high energy estimates below the real line. More precisely, we need to show that
  \[
    Q:=|\tau|^{-1}\sigma_1\left(\frac{1}{2i}(\cP-\cP^*)\right) < \frac{\numin}{2}
  \]
  at the trapped set (cf.\ the discussion in \cite[\S{5.4}]{HintzVasyQuasilinearKdS}), where we take the adjoint with respect to some Riemannian inner product $B$, to be chosen, on the bundle $\Lambda^p\Sph^{n-2}\oplus\Lambda^{p-1}\Sph^{n-2}\oplus\Lambda^{p-1}\Sph^{n-2}\oplus\Lambda^{p-2}\Sph^{n-2}$; notice that $Q$ is a self-adjoint section of the endomorphism bundle of this bundle.
  
  If one does not allow more general \emph{pseudodifferential} inner products $B$, one can arrange this for a restricted range of black hole parameters in $3+1$ dimensions. Indeed, a natural guess is to use $B=H\oplus H$ in the tangential-normal decomposition~\eqref{EqBundleDecNearBdy}, thus
  \[
    B = r^{-2p}\Omega_p \oplus r^{-2(p-1)}\Omega_{p-1} \oplus r^{-2(p-1)}\Omega_{p-1} \oplus r^{-2(p-2)}\Omega_{p-2}.
  \]
  In this case, the expression~\eqref{EqSDSBox} shows that the only parts of $\cP$ that are not symmetric with respect to $B$ at the spacetime trapped set
  \[
    \Gamma=\{(t,r_p,\omega;\tau,0,\eta)\colon \frac{r^4}{\Delta_r}\tau^2=|\eta|^2\},
  \]
  are the $(2,3)$ and $(3,2)$ components; thus, taking adjoints with respect to $B$, we compute
  \[
    Q=\begin{pmatrix}
	    0 & 0 & 0 & 0 \\
		0 & 0 & \pm r^2\mu^{-1}\mu' & 0 \\
		0 & \pm r^2\mu^{-1}\mu' & 0 & 0 \\
		0 & 0 & 0 & 0
	  \end{pmatrix}
  \]
  at $\Gamma$, with the sign depending the sign of $\tau$. Now $(\mu/r^2)'=0$ at $r=r_p$ implies $\mu^{-1}\mu' r_p^2=2r_p$; the eigenvalues of $Q$ are therefore $\pm 2r_p$, and they are bounded by $\numin/2$ if and only if
  \[
    r_p^2\lambda > \frac{(5-n)(n-3)}{4(n-1)},
  \]
  which in spacetime dimensions $n\geq 5$ is always satisfied. In dimension $n=4$ however, the condition becomes $r_p^2\lambda > 1/12$, or
  \[
    9 \Lambda M_\bullet^2 > \frac{1}{4},
  \]
  while the non-degeneracy condition~\eqref{EqSDSNondegeneracy} requires $9 \Lambda M_\bullet^2<1$. Therefore, only for very massive black holes or very large cosmological constants does the above choice of positive definite inner product $B$ yield a sufficiently small imaginary part of $\cP$.\footnote{In fact, one can check that for parameters $M_\bullet$ and $\Lambda$ with $9 \Lambda M_\bullet^2\leq 1/4$, the endomorphism $Q$ is not bounded by $\numin/2$ for any choice of $B$.} To overcome this problem, one needs to allow $B$ to be a \emph{pseudodifferential inner product} on the form bundle, introduced by Hintz \cite{HintzPsdoInner}: such an inner product depends on the position in phase space rather than physical space; equivalently, one can replace $\cP$ by $\cQ\cP\cQ^{-1}$, where $\cQ$, an elliptic pseudodifferential operator acting on the form bundle, is chosen in such a way that $\cQ\cP\cQ^{-1}$, relative to a Riemannian inner product on the form bundle, e.g.\ $B$, has (arbitrarily) small imaginary part, which is in particular bounded by $\numin/2$. That such an inner product can be chosen is proved for general tensor-valued waves on Schwarzschild--de Sitter spacetimes with spacetime dimension $n\geq 4$ in \cite[Theorem~4.8]{HintzPsdoInner}; see also \cite[Theorem~2.1]{HintzPsdoInner} for the resulting resonance expansion of tensor-valued waves. The point of view of pseudodifferential inner products shows precisely which structure of the subprincipal part of $\Box$ at the trapped set makes such a choice of a pseudodifferential inner product (equivalently, a choice of a conjugating operator $\cQ$) possible.
\end{proof}

We can in fact prove boundedness and asymptotics for solutions of the wave equation on differential forms in all form degrees as well. To begin, write
\begin{equation}
\label{EqDDelInvExpansion}
  (\td(\sigma)+\tdel(\sigma))^{-1} = \sigma^{-1}A_{-1} + \cO(1),\quad A_{-1}=\sum_{j=1}^4 \la\cdot,\psi_j\ra\phi_j,
\end{equation}
near $\sigma=0$, where $\{\phi_j\}_{j=1,\ldots,4}$ is a basis of the space of resonant states and $\{\psi_j\}_{j=1,\ldots,4}$ is a basis of the space $\cH_*=\ker(\td(0)+\tdel(0))^*$ of dual states.\footnote{After choosing the $\phi_j$, say, the $\psi_j$ are uniquely determined.} Therefore, we need to understand the dual states of $d+\delta$ in order to understand the order and structure of the pole of $\wt\Box(\sigma)^{-1}=\bigl((\td(\sigma)+\tdel(\sigma))^{-1}\bigr)^2$ as $\sigma=0$. Notice here that the adjoint $(\td(\sigma)+\tdel(\sigma))^*$ acts on distributions on $\wt X$ which are \emph{supported} at the Cauchy hypersurface $\pa\wt X$ (see \cite[Appendix~B]{HormanderAnalysisPDE} for this and related notions). In particular, an element $\wt u\in\ker(\td(\sigma)+\tdel(\sigma))^*$ satisfies $\wt u\in\ker\wt\Box(\sigma)$ and is a supported distribution at $\pa\wt X$, thus by local uniqueness, $\wt u$ vanishes in the hyperbolic region $\wt X\setminus X$, hence $\supp\wt u\subset\oX$.

\begin{lemma}
\label{LemmaSDSDualStates}
  The spaces $\cH_*$ and $\cK_*$ of dual states for $d+\delta$ and $\Box$, respectively, on $n$-dimensional Schwarzschild--de Sitter space, $n\geq 4$, are graded by form degree, $\cH_*=\bigoplus_{k=0}^n\cH_*^k$, $\cK_*=\bigoplus_{k=0}^n\cK_*^k$, and have the following explicit descriptions:
  \begin{gather*}
    \cK_*^0 = \la 1_X \ra, \cH_*^0 = 0, \quad \cK_*^n = \la 1_X r^{n-2}\,dt\wedge dr\wedge\omega \ra, \cH_*^n = 0, \\
	\cH_*^1 = \la \delta_{r=r_-}\,dr, \delta_{r=r_+}\,dr \ra, \quad \cH_*^{n-1} = \la \delta_{r=r_-}\,dr\wedge\omega, \delta_{r=r_+}\,dr\wedge\omega \ra, \\
	\cH_*^k=0,\ \ k=2,\ldots,n-2,
  \end{gather*}
  where $\omega$ denotes the volume form on the round sphere $\Sph^{n-2}$. Furthermore, $\cK_*^1=\cH_*^1$, $\cK_*^{n-1}=\cH_*^{n-1}$ and
  \[
	\cK_*^k=0,\ \ k=3,\ldots,n-3.
  \]
  For $n=4$,
  \[
    \cK_*^2=\la 1_X\omega,1_X r^{-2}\,dt\wedge dr\ra,
  \]
  while for $n>4$,
  \[
    \cK_*^2=\la 1_X r^{2-n}\,dt\wedge dr\ra,\quad \cK_*^{n-2}= \la 1_X\omega \ra.
  \]
  We have $\la\phi,\psi\ra=0$ for all $\phi\in\cH$, $\psi\in\cH_*$.
\end{lemma}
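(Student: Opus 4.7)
The plan is to identify the dual states as distributions on $\wt X$ supported in $\oX$ (by local uniqueness across the artificial Cauchy hypersurface $\pa\wt X$) that are annihilated, distributionally, by $d+\delta$ or $\Box$. Since both operators are formally self-adjoint with respect to the Lorentzian pairing induced by the fiber inner product on $\Lambda\wt M$ and the volume form $|d\wt g|$, this characterization is direct. Two natural families of such distributions present themselves: \emph{bulk} dual states of the form $1_X\phi$, where $\phi$ is one of the resonant states from Theorem~\ref{ThmSDSDDel} extended to $\wt X$; and \emph{delta-type} dual states supported on the horizons $\{r=r_\pm\}$.

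The bulk construction extends the argument of Remark~\ref{RmkNoDDelHodge}: there, $\td(1_X)$ is a sum of delta distributions at the two horizons and satisfies $\tdel\,\td(1_X)=0$, so $\wt\Box(1_X)=0$ distributionally. The same mechanism applies to $\phi=\star 1 = r^{n-2}\,dt\wedge dr\wedge\omega$ via $\star\,\wt\Box=\wt\Box\,\star$, yielding $1_X\cdot r^{n-2}\,dt\wedge dr\wedge\omega \in \cK_*^n$. For the remaining bulk candidates $1_X\omega$ and $1_X\,r^{-(n-2)}\,dt\wedge dr$ (together with their $\star$-duals when $n>4$), the identity $\wt\Box(1_X\phi)=0$ is verified by a direct distributional calculation using Lemma~\ref{LemmaSDSOps}: the commutator $[\wt\Box,1_X]\phi$ is supported at $\{r=r_\pm\}$, and its delta-function coefficients vanish thanks to the first-order vanishing of $\mu$ at the horizons combined with the explicit structure of $\phi$ in the decomposition \eqref{EqSDSDecomposition}.

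For the delta-type construction, the equation $\tdel\,\td(1_X)=0$ just established decomposes, by disjointness of supports at the two horizons, into $\tdel(\delta_{r=r_\pm}\,dr)=0$ individually; combined with $\td(\delta_{r=r_\pm}\,dr)=0$ (distributionally, from $\td^2=0$), this gives $\delta_{r=r_\pm}\,dr \in \cH_*^1$. Applying the Hodge star, which commutes with $\td+\tdel$ up to sign, produces the analogous basis of $\cH_*^{n-1}$.

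Completeness of the enumeration follows from the Fredholm index~$0$ property split by form-parity: $\dim\cH_*^{\mathrm{even}}=\dim\cH^{\mathrm{odd}}$, $\dim\cH_*^{\mathrm{odd}}=\dim\cH^{\mathrm{even}}$, and analogously for $\cK_*$ versus $\cK$. Matching the counts from Theorem~\ref{ThmSDSDDel} with the constructions above exhausts $\cH_*$ and $\cK_*$, and forces $\cH_*^k=\cK_*^k$ in degrees $k=1,n-1$ by dimension. The pairing claim $\la\phi,\psi\ra=0$ for $\phi\in\cH$, $\psi\in\cH_*$ is then immediate from spacetime form-degree bookkeeping: $\cH$ lives in degrees $\{0,2,n-2,n\}$ (or $\{0,2,4\}$ for $n=4$), whereas $\cH_*$ lives in degrees $\{1,n-1\}$; these sets are disjoint for $n\geq 4$, so every such pairing vanishes trivially. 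The principal obstacle in the plan is the case-by-case distributional verification $\wt\Box(1_X\phi)=0$ for the non-trivial bulk $\phi$, which requires careful handling of the delta-function boundary contributions arising from the commutator with $1_X$.
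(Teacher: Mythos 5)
Your proposal is correct and follows essentially the same route as the paper: dual states are characterized as distributions supported in $\oX$, the explicit candidates ($1_X$ times the even-degree resonant forms, and horizon delta distributions in degrees $1$ and $n-1$) are verified by a direct distributional computation near $r=r_\pm$ exploiting the vanishing of $\mu$ there, the $\cH_*$ states are obtained from $\td+\tdel$ applied to $\cK_*$ states (your support-splitting of $\tdel\,\td\,1_X=0$ is a slightly cleaner packaging of the same step), completeness follows from the index-$0$ dimension count against Theorem~\ref{ThmSDSDDel}, and orthogonality is the same form-degree disjointness observation. The only caveat is that the delta-coefficient cancellation you defer to is not a consequence of the first-order vanishing of $\mu$ alone but of the divergence-form structure $\pa_r(r^{n-2}\mu\,\pa_r\cdot)$ of the relevant terms, which is exactly what the paper's explicit matrix computation exhibits.
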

\begin{proof}
  For computing the dual resonant states, we need to compute the form of $\wt\Box(0)$ near the two components of $\pa\oX=\Sph^{n-2}\sqcup\Sph^{n-2}$. Since dual states are supported in $\Xeven$, it suffices to compute $\sC_\pm^{-1}\wh\Box(0)\sC_\pm$, since any smooth extension of this operator to $\wt X$ agrees with $\wt\Box(0)$ in $X$ and to infinite order at $\pa\Xeven$,\footnote{Since the Schwarzschild--de Sitter metric is analytic, we in fact do not have to make any choices.} thus the difference annihilates dual states. Using Lemma~\ref{LemmaSDSOps}, we compute
  \begin{align*}
    -&\sC_\pm^{-1}\wh\Box_p(0)\sC_\pm
	  =r^{-2}
	    \begin{pmatrix}
		  \Delta_p&0&0&0 \\
		  0&\Delta_{p-1}&0&0 \\
		  0&0&\Delta_{p-1}&0 \\
		  0&0&0&\Delta_{p-2}
		\end{pmatrix} \\
   & + \begin{pmatrix}
         \alpha^{-1}\pa_{r,p}^*\alpha^2\pa_r & -2\alpha^2 r^{-1}d_{p-1} & \pm 2r^{-1}d_{p-1} & 0 \\
		 -2r^{-3}\delta_p & \pa_r\alpha^{-1}\pa_{r,p-1}^*\alpha^2 & \pm(2(p-1)-(n-2))r^{-2} & \mp 2r^{-1}d_{p-2} \\
		 0 & 0 & \alpha\pa_{r,p-1}^*\pa_r & -2\alpha^2 r^{-1}d_{p-2} \\
		 0 & 0 & -2r^{-3}\delta_{p-1} & \pa_r\alpha\pa_{r,p-2}^*
       \end{pmatrix},
  \end{align*}
  where the Laplace operators, differentials and codifferentials are the operators on $\Sph^{n-2}$. This does extend to an operator acting on smooth functions on $(r_\pm-\delta,r_\pm+\delta)\times\Sph^{n-2}$, $\delta>0$ small, near $r_\pm$.

  Now for $p=0$, clearly $\alpha^{-1}\pa_{r,0}^*\alpha^2\pa_r 1_X=\mp \alpha^{-1}\pa_{r,0}^*(\mu\delta_{r=r_\pm})=0$, hence $\cK_*^0=\la 1_X\ra$. (Observe that since $\wt\Box_0(0)$ is Fredholm of index $0$ and has a $1$-dimensional kernel according to Theorem~\ref{ThmSDSDDel}, the space of dual $0$-form resonances is $1$-dimensional as well.) Likewise, for $p=n$, we have
  \[
    \pa_r\alpha\pa_{r,n-2}^*(1_X r^{n-2}\,dt\wedge dr\wedge\omega) = -\pa_r\mu r^{n-2}\pa_r(1_X\,dt\wedge dr\wedge\omega) = 0,
  \]
  confirming $\cK_*^n=\la 1_X r^{n-2}\,dt\wedge dr\wedge\omega\ra$. By completely analogous arguments, we find $1_X r^{2-n}\,dt\wedge dr\in\cK_*^2$ and $1_X\omega\in\cK_*^{n-2}$.

  In order to proceed, notice that $\td(0)+\tdel(0)$ maps $\cK_*$ into $\cH_*$. Hence, we can find dual states for $d+\delta$ by applying $\td(0)+\tdel(0)$ to the dual states of $\Box$ that we have already identified. For this computation, we note
  \begin{align*}
    \sC_\pm^{-1}\hd_p(0)\sC_\pm
	  &= \begin{pmatrix}
	      d_{\Sph^{n-2},p} & 0 & 0 & 0 \\
		  \pa_r & -d_{\Sph^{n-2},p-1} & 0 & 0 \\
		  0 & 0 & -d_{\Sph^{n-2},p-1} & 0 \\
		  0 & 0 & -\pa_r & d_{\Sph^{n-2},p-2}
	    \end{pmatrix}, \\
	\sC_\pm^{-1}\hdel_p(0)\sC_\pm
	  &= \begin{pmatrix}
	       -r^2\delta_{\Sph^{n-2},p} & -\alpha^{-1}\pa_{r,p-1}^*\alpha^2 & \pm\alpha^{-1}\pa_{r,p-1}^* & 0 \\
		   0 & r^{-2}\delta_{\Sph^{n-2},p-1} & 0 & \pm\alpha^{-1}\pa_{r,p-2}^* \\
		   0 & 0 & r^{-2}\delta_{\Sph^{n-2},p-1} & \alpha\pa_{r,p-2}^* \\
		   0 & 0 & 0 & -r^{-2}\delta_{\Sph^{n-2},p-2}
	     \end{pmatrix}.
  \end{align*}
  Thus, $(\td_0(0)+\tdel_0(0))1_X$ and $(\td_2(0)+\tdel_2(0))(1_X r^{2-n}\,dt\wedge dr)$ are both linear combinations of $\delta_{r=r_\pm}\,dr$, hence $\delta_{r=r_\pm}\,dr\in\cH_*^1\subset\cK_*^1$, and similarly $(\td_n(0)+\tdel_n(0))(1_X\star 1)$ and $(\td_{n-2}(0)+\tdel_{n-2}(0))(1_X\omega)$ are both linear combinations of $\delta_{r=r_\pm}\,dr\wedge\omega$, hence $\delta_{r=r_\pm}\,dr\wedge\omega\in\cH_*^{n-1}\subset\cK_*^{n-1}$.

  We have therefore identified $4$ and $8$ linearly independent dual states for $d+\delta$ and $\Box$, which is equal to the dimensions of $\cH$ and $\cK$, respectively, and since $\td(0)+\tdel(0)$ and $\wt\Box(0)$ have index $0$, all dual states are linear combinations of these, i.e.\ we have thus identified a basis of the spaces of dual states. The orthogonality of resonant and dual states for $d+\delta$ follows immediately from the explicit forms of both derived in Theorem~\ref{ThmSDSDDel} and in this lemma: all dual states have form degree $1$ or $n-1$, while all resonant states have form degree $0$, $2$, $n-2$ or $n$.
\end{proof}

The orthogonality statement in Lemma~\ref{LemmaSDSDualStates} combined with \eqref{EqDDelInvExpansion} immediately gives $A_{-1}^2=0$, hence the coefficient of $\sigma^{-2}$ in the Laurent expansion of $\wh\Box(\sigma)^{-1}$ at $\sigma=0$ vanishes. For precisely those form degrees $0\leq p\leq n$ for which $\cK^p$ is non-trivial, $\wh\Box(\sigma)^{-1}$ does have a simple pole at $\sigma=0$, and
\[
  \wh\Box_p(\sigma)^{-1} = \sigma^{-1}\sum_{j=1}^{\dim\cK^p} \la\cdot,\psi'_j\ra\phi'_j + \cO(1),
\]
where $\phi'_j$ and $\psi'_j$ run over a basis of $\ker\wh\Box_p(0)\cong\cK^p$ and $\cK_*^p=\ker\wh\Box_p(0)^*$, respectively.\footnote{After choosing the $\phi'_j$, the $\psi'_j$ are uniquely determined, and vice versa.}

\begin{thm}
\label{ThmSDSBox}
  On $4$-dimensional Schwarzschild--de Sitter space, if $0\leq p\leq 4$ and $u$ is a differential form of degree $p$ which solves $\Box u=0$ with smooth initial data, then $u$ decays exponentially to
  \begin{itemize}
    \item a constant for $p=0$,
	\item a linear combination of $u_+$ and $u_-$, defined in the statement of Theorem~\ref{ThmSDSDDel}, for $p=1$,
	\item a linear combination of $\omega$ and $r^{-2}\,dt\wedge dr$ for $p=2$,
	\item a linear combination of $\star u_+$ and $\star u_-$ for $p=3$ and
	\item a constant multiple of $r^2\,dt\wedge dr\wedge\omega$ for $p=4$.
  \end{itemize}
  Analogous statements hold on any $n$-dimensional Schwarzschild--de Sitter space, $n\geq 5$.
\end{thm}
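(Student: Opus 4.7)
The proof is essentially an assembly of ingredients already put in place in the preceding discussion; I sketch how they combine.

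First, the high energy estimates for $\wt\Box(\sigma)^{-1}$ in a strip $\Im\sigma>-\epsilon$ are in hand from the proof of Theorem~\ref{ThmSDSDDel}: the semiclassical Hamilton flow of $-r^2\Box_g$ has its only trapped set $\Gamma_\semi$ over $r=r_p$, the trapping is $r$-normally hyperbolic with minimal expansion rate $\numin>0$, and the imaginary part of the subprincipal symbol of $\Box_g$ relative to a suitable pseudodifferential inner product on the form bundle (supplied by \cite{HintzPsdoInner}) is bounded at $\Gamma_\semi$ by $\numin/2$. Dyatlov's spectral gap result \cite{DyatlovSpectralGap} and the framework of \cite{VasyMicroKerrdS} then give a polynomial bound on $\wt\Box(\sigma)^{-1}$ on suitable function spaces for $\sigma$ in such a strip, away from the (finite) set of resonances.

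Second, the only resonance of $\Box_g$ in $\Im\sigma\ge 0$ is $\sigma=0$ by Theorem~\ref{ThmIntroSummary}, and this zero resonance is simple on each pure form degree $p$: indeed, the orthogonality $\la\phi_i,\psi_j\ra=0$ between $\cH$-states (in degrees $0,2,n-2,n$) and $\cH_*$-states (in degrees $1,n-1$) from Lemma~\ref{LemmaSDSDualStates} forces $A_{-1}^2=0$ in the Laurent expansion \eqref{EqDDelInvExpansion}, and therefore the coefficient of $\sigma^{-2}$ in the expansion of $\wt\Box(\sigma)^{-1}=(\td(\sigma)+\tdel(\sigma))^{-2}$ at $\sigma=0$ vanishes; as already recorded, on the form-degree-$p$ component
\[
\wh\Box_p(\sigma)^{-1} = \sigma^{-1}\sum_{j=1}^{\dim\cK^p}\la\cdot,\psi'_j\ra\phi'_j + \cO(1),
\]
with $\{\phi'_j\}$ a basis of $\cK^p$ and $\{\psi'_j\}$ a basis of $\cK_*^p$.

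Third, a standard contour-deformation argument in the inverse Mellin transform in $e^{-t_*}$ (as in \cite[\S3]{VasyMicroKerrdS} and \cite[\S4.4]{HintzVasyQuasilinearKdS}) converts the forward Cauchy problem $\Box_g u=0$ with smooth initial data into the study of $\wt\Box(\sigma)^{-1}$ applied to a compactly supported forcing on $\wt M$. Shifting the contour from $\Im\sigma\gg 0$ down into the strip $\Im\sigma>-\epsilon$ picks up only the residue at $\sigma=0$, yielding an asymptotic expansion
\[
u = \sum_{j=1}^{\dim\cK^p} c_j\,\phi'_j + \cO(e^{-\epsilon t_*}),
\]
with scalars $c_j$ determined by pairing the initial data against the dual resonant states $\psi'_j$. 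Inserting the explicit description of $\cK^p$ from Theorem~\ref{ThmSDSDDel} in each form degree and each spacetime dimension $n\ge 4$ reads off precisely the stationary states listed in the theorem statement.

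The only step that a full write-up must do with some care is the simplicity of the pole at $\sigma=0$: without the orthogonality argument of the previous paragraph, $\wt\Box(\sigma)^{-1}$ could a priori have a double pole at the origin, in which case solutions would be allowed to grow linearly in $t_*$ before settling down. Everything else is bookkeeping once the resonance expansion framework is invoked.
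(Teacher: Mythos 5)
Your proposal is correct and follows essentially the same route as the paper: Theorem~\ref{ThmSDSBox} is obtained there by precisely this combination of the trapping/high-energy analysis carried out in the proof of Theorem~\ref{ThmSDSDDel}, the orthogonality of $\cH$ and $\cH_*$ from Lemma~\ref{LemmaSDSDualStates} forcing $A_{-1}^2=0$ and hence the simplicity of the pole of $\wh\Box_p(\sigma)^{-1}$ at $\sigma=0$, and the resonance-expansion framework of \cite{VasyMicroKerrdS}. Nothing essential is missing.
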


\section{Results for Kerr--de Sitter space}
\label{SecKdS}

We now prove that some of the results obtained in the previous section for the $4$-dimensional Schwarzschild--de Sitter spacetime are stable under perturbations \emph{which do not respect the warped product structure imposed in \S\ref{SecPrelim}}, which in particular allows us to draw conclusions about asymptotics for solutions of $(d+\delta)u=0$ or $\Box u=0$ on Kerr--de Sitter space with very little effort, even though the latter does not satisfy the requirements of Section~\ref{SecPrelim}. Thus, fixing the black hole mass $M_\bullet$ and the cosmological constant $\Lambda>0$, denote by $g_a$ the Kerr--de Sitter metric with angular momentum $a$; thus, $g_0$ is the Schwarzschild--de Sitter metric.\footnote{Assuming the non-degeneracy condition \eqref{EqSDSNondegeneracy}, which ensures that the cosmological horizon lies outside the black hole event horizon, the same will be true for small $|a|$, which is the setting in which work here. In general, one would need to assume that $\Lambda$, $M_\bullet$ and $a$ are such that the non-degeneracy condition \cite[(6.2)]{VasyMicroKerrdS} holds.} Only very basic facts about the metric will be used; we refer the reader to \cite[\S{6}]{VasyMicroKerrdS} for details and further information. We will write $\delta_{g_a}$ for the codifferential with respect to the metric $g_a$. We furthermore denote by $M=\R_t\times X$ the domain of exterior communications, and by $\wt M=\R_{t_*}\times\wt X$ the `extended' spacetime, with $t_*$ defined in\footnote{Our $t$, $t_*$ are denoted $\tilde t$, $t$, respectively in the reference.}~\cite[Equation~(6.4) and beginning of \S6.4]{VasyMicroKerrdS}.

To begin, recall that the scalar wave equation (and by essentially the same arguments the wave equation on differential forms, since the principal symbol of the Hodge d'Alembertian is scalar, see also \cite[\S{4}]{VasyHyperbolicFormResolvent} for a discussion in a related context) on the Kerr--de Sitter spacetime fits into the microlocal framework developed in \cite{VasyMicroKerrdS}. In particular, asymptotics for waves follow directly from properties of the Mellin transformed normal operator family, and moreover the analysis of the latter is stable under perturbations. In the present context, this concretely means that for any $\eps>0$, there exists $a_\eps>0$ such that for all angular momenta $a$ with $|a|<a_\eps$, the meromorphic family of operators $\cR_a(\sigma):=(\td(\sigma)+\wt{\delta_{g_a}}(\sigma))^{-1}$ has no poles in $|\sigma|\geq\eps$, $\Im\sigma\geq 0$, and such that moreover all poles in $|\sigma|<\eps$ are perturbations of the pole of $\cR_0(\sigma)$ at $0$, in the sense the sum of the ranks of the resonances (i.e.\ poles of $\cR_a(\sigma)$) in $|\sigma|<\eps$ equals the corresponding sum for the Schwarzschild--de Sitter metric, which equals $4$ by Theorem~\ref{ThmSDSDDel}; we refer the reader to \cite[Appendix~A]{HintzThesis} (which extends the perturbative discussion of~\cite[\S2.7]{VasyMicroKerrdS}) for definitions and details, and here merely point out the presently relevant consequence that sum of the dimensions of the kernels of $\td(\sigma)+\wt{\delta_{g_a}}(\sigma)$ for $|\sigma|<\eps$ is at most $4$-dimensional. Now, Lemma~\ref{LemmaSDSDualStates} suggests considering dual resonant states instead (which have a simpler form); the same stability result as for $\cR_a(\sigma)$ holds for $\cR_a^*(\sigma):=((\td(\sigma)+\wt{\delta_{g_a}}(\sigma))^*)^{-1}$. However, just as in the case of Schwarzschild--de Sitter space, we can immediately write down $4$ linearly independent dual $0$-resonant states for $d+\delta_{g_a}$: namely, apply $\td(0)+\wt{\delta_{g_a}}(0)$ to $1_X$ (this is a dual resonant state for $\Box_{g_a}$), which produces a sum of $\delta$-distributions supported at the horizons $r=r_\pm$, and splitting this up into the part supported at $r_-$ and the part supported at $r_+$, we obtain $2$ linearly independent dual resonant states for $d+\delta$ in form degree $1$. The same procedure can be applied to $\star_{g_a}1_X$, yielding $2$ linearly independent dual resonant states for $d+\delta$ in form degree $3$ (which are simply the Hodge duals of the dual states in form degree $1$). Hence,
\begin{equation}
\label{EqKDSDDelRes}
  \cH_a:=\ker(\td(0)+\wt{\delta_{g_a}}(0)),
\end{equation}
which has the same dimension as
\begin{equation}
\label{EqKDSDDelAdjointRes}
  \cH_{a,*}:=\ker(\td(0)+\wt{\delta_{g_a}}(0))^*,
\end{equation}
is at least $4$-dimensional for small $|a|$, but it is also at most $4$-dimensional by the above perturbation stability argument! Hence, for small $|a|$, we deduce that $0$ is the only pole of $\cR_a(\sigma)$, i.e.\ the only resonance of $d+\delta_{g_a}$, in $\Im\sigma\geq 0$ (and also the only pole of $\cR_a^*(\sigma)$ in this half space), and is simple. 

We can use this in turn to prove the stability of the zero resonance for $\Box_{g_a}$ in all form degrees. Let $\pi_k\colon\CI(\wt M;\Lambda\wt M)\to\CI(\wt M;\Lambda\wt M)$ denote the projection onto differential forms with pure form degree $k\in\{0,\ldots,4\}$, which induces a map on $\CI(\wt X;\Lambda\wt X\oplus\Lambda\wt X)$. Let
\begin{equation}
\label{EqKDSBoxRes}
  \cK_a:=\ker\wt{\Box_{g_a}}(0)=\bigoplus_{k=0}^4\cK_a^k
\end{equation}
be the grading of the zero resonant space of $\Box_{g_a}$ by form degree, likewise
\begin{equation}
\label{EqKDSBoxAdjointRes}
  \cK_{a,*}:=\ker\wt{\Box_{g_a}}(0)^*=\bigoplus_{k=0}^4\cK_{a,*}^k
\end{equation}
for the space of dual resonant states. Observe that $\pi_k\cH_a\subseteq\cK_a^k$, since $u\in\cH_a$ implies $0=\pi_k\Box_{g_a}u=\Box_{g_a}\pi_k u$. Now, since $\Box_{g_a}1=0$, we have $\cK_a^0=\la 1\ra$ for small $|a|$ by stability, likewise $\cK_a^4=\la\star_{g_a}1\ra$. Furthermore, $\cK_a^2$ is at most $2$-dimensional for small $|a|$ (since $\cK_0^2$ is $2$-dimensional), but also $\cK_a^2\supseteq\pi_2\cH_a$; now $\pi_2\cH_0$ is $2$-dimensional by Theorem~\ref{ThmSDSDDel} and $\cH_a$ depends smoothly on $a$, thus $\cK_a^2=\pi_2\cH_a$ is $2$-dimensional for small $|a|$; therefore $\cK_a^2=\pi_2\cH_a$ is $2$-dimensional. Finally, we have $\cH^1_{a,*}\subseteq\cK^1_{a,*}$, hence by the analysis of $d+\delta_{g_a}$ above, $\cK^1_{a,*}$, hence $\cK^1_a$, is at least $2$-dimensional, but since $\cK^1_0$ is $2$-dimensional, we must in fact have $\dim\cK^1_a=2$ for small $|a|$; likewise $\dim\cK^3_a=2$. Hence, we have $\dim\cK_a^k=\dim\cK_0^k$ for $k=0,\ldots,4$, which in particular means that the zero resonance of $\Box_{g_a}$ is the only resonance in $\Im\sigma\geq 0$, and the resonance is simple.

We now summarize the above discussion, including a small improvement. The following theorem is completely parallel to Theorem~\ref{ThmSDSDDel}, Lemma~\ref{LemmaSDSDualStates} and Theorem~\ref{ThmSDSBox} for Schwarzschild--de Sitter spacetimes, extending these to Kerr--de Sitter spacetimes with small angular momentum:

\begin{thm}
\label{ThmKDS}
  For small $|a|$, the only resonance of $d+\delta_{g_a}$ in $\Im\sigma\geq 0$ is a simple resonance at $\sigma=0$, likewise for $\Box_{g_a}$. The spaces $\cH_a$ and $\cH_{a,*}$ of resonant and dual resonant states for $d+\delta_{g_a}$ are graded by form degree as $\cH_a=\bigoplus_{k=0}^4\cH_a^k$, $\cH_{a,*}=\bigoplus_{k=0}^4\cH_{a,*}^k$, in particular $\cH_a^k=\ker\td_k(0)\cap\ker(\wt{\delta_{g_a}})_k(0)$,\footnote{The subscript denotes the degree of differential forms on which the respective operator acts.} with
  \[
    \cH_a^0=\la 1\ra,\quad \cH_a^1=0,\quad \cH_a^2=\la u_{a,1},u_{a,2}\ra,\quad \cH_a^3=0,\quad \cH_a^4 = \la\star_{g_a}1\ra
  \]
  for some $2$-forms $u_{a,1},u_{a,2}$, which can be chosen to depend smoothly on $a$,\footnote{We derive an explicit expression in Remark~\ref{RmkKDSExplicit} below.} with $u_{0,1}=r^{-2}\,dt\wedge dr$, $u_{0,2}=\omega$ in the notation of Theorem~\ref{ThmSDSDDel}, and
  \begin{gather*}
    \cH_{a,*}^0=0,\quad \cH_{a,*}^1=\la\delta_{r=r_-}\,dr,\delta_{r=r_+}\,dr\ra, \\
	\cH_{a,*}^2=0,\quad \cH_{a,*}^3=\star_{g_a}\cH_{a,*}^1,\quad \cH_{a,*}^4=0.
  \end{gather*}
  For the spaces $\cK_a$ and $\cK_{a,*}$ of resonant and dual resonant states for $\Box_{g_a}$, we have
  \[
    \cK_a^0=\cH_a^0,\quad \cK_a^1=\la u_{a,+},u_{a,-}\ra,\quad \cK_a^2=\cH_a^2,\quad \cK_a^3=\star_{g_a}\cK_a^1,\quad \cK_a^4=\cH_a^4
  \]
  for some $1$-forms $u_{a,\pm}$, which can be chosen to depend smoothly on $a$, with $u_{0,\pm}=u_\pm$ in the notation of Theorem~\ref{ThmSDSBox}, and
  \begin{gather*}
    \cK_{a,*}^0=\la 1_X\ra,\quad \cK_{a,*}^1=\cH_{a,*}^1, \\
	\cK_{a,*}^2=\la 1_X u_{a,1},1_X u_{a,2}\ra,\quad \cK_{a,*}^3=\cH_{a,*}^3,\quad \cK_{a,*}^4=\la \star_{g_a}1_X\ra.
  \end{gather*}
  In particular, the form degree $k$ part of a solution $u$ to $(d+\delta_{g_a})u=0$, resp.\ $\Box_{g_a}u=0$, with smooth initial data decays exponentially to an element of $\cH_a^k$, resp.\ $\cK_a^k$, for $k=0,\ldots,4$.
\end{thm}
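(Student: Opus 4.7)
The plan is to run the entire argument as a perturbation of the Schwarzschild--de Sitter case (Theorem~\ref{ThmSDSDDel}, Lemma~\ref{LemmaSDSDualStates}, Theorem~\ref{ThmSDSBox}). The first step is to invoke the stability under perturbations of the microlocal framework of \cite{VasyMicroKerrdS}: the principal symbol of $\Box_{g_a}$ is scalar, the radial points at the horizons and the (normally hyperbolic) trapping at the photon sphere all deform smoothly in $a$, and the pseudodifferential inner product estimates from \cite{HintzPsdoInner} used in the proof of Theorem~\ref{ThmSDSDDel} are stable as well. Consequently, for any $\eps>0$ there is $a_\eps>0$ such that for $|a|<a_\eps$ the family $\cR_a(\sigma)=(\td(\sigma)+\wt{\delta_{g_a}}(\sigma))^{-1}$ is holomorphic in $\{|\sigma|\geq\eps,\ \Im\sigma\geq-c\}$ for some $c>0$, while in $|\sigma|<\eps$ the total multiplicity (the rank of the spectral projector for a small contour around $0$) is upper semicontinuous in $a$, hence $\leq\dim\cH_0=4$ for small $|a|$. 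The same stability statements hold verbatim for $\wt{\Box_{g_a}}(\sigma)^{-1}$.

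The heart of the argument is to match this upper bound by explicitly exhibiting the required number of states on the \emph{dual} side, where computations are simplest. First, $1_X\in\cK_{a,*}^0$ and $\star_{g_a}1_X\in\cK_{a,*}^4$ as in the Schwarzschild--de Sitter case, since $\Box_{g_a}1=0$ and $\Box_{g_a}\star_{g_a}1=0$ away from the horizons. Applying $\td(0)+\wt{\delta_{g_a}}(0)$ to $1_X$ produces a distribution supported at $\pa\oX=\{r=r_{a,-}\}\sqcup\{r=r_{a,+}\}$, and splitting this into its two components gives two linearly independent elements of $\cH_{a,*}^1$; similarly, applying the same operator to $\star_{g_a}1_X$ yields two elements of $\cH_{a,*}^3$, which are Hodge duals of the first two. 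Together with the dual pairing (vanishing of pairings between states of different form degree), these four distributions are linearly independent dual resonant states for $d+\delta_{g_a}$ at $\sigma=0$. Since $\dim\cH_{a,*}\leq 4$ from the upper semicontinuity, we deduce $\dim\cH_a=\dim\cH_{a,*}=4$, that $\sigma=0$ is the only resonance of $d+\delta_{g_a}$ in $\Im\sigma\geq 0$, and that it is simple; moreover the form degrees of the dual states are preserved, giving $\cH_{a,*}^k$ exactly as claimed.

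The resonant states of $d+\delta_{g_a}$ themselves are then produced by a parallel count. In degrees $0$ and $4$ we have $1\in\cH_a^0$, $\star_{g_a}1\in\cH_a^4$. The projector $\pi_k\colon\cH_a\to\cK_a^k$ introduced before the theorem statement satisfies $\pi_k\cH_a\subset\cK_a^k$ (since $\Box_{g_a}$ is form-degree-preserving), and at $a=0$ we have $\pi_2\cH_0=\cK_0^2$ of dimension $2$, $\pi_0\cH_0=\cK_0^0$, $\pi_4\cH_0=\cK_0^4$ of dimension $1$, while $\pi_1\cH_0=\pi_3\cH_0=0$. Because $\cH_a$ varies continuously with $a$ (the spectral projector does, and a smooth choice of basis $u_{a,1},u_{a,2}$ of $\cH_a^2$ is obtained by applying the projector to a fixed basis of $\cH_0^2$), $\pi_2\cH_a$ is $2$-dimensional for small $|a|$; combined with $\sum_k\dim\cH_a^k=4$, this forces $\cH_a^1=\cH_a^3=0$ and pins down $\cH_a^k$ in all degrees. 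The analogous stability argument, now using $\cK_{a,*}\supset(\td(0)+\wt{\delta_{g_a}}(0))^{-1}\cH_{a,*}$ as a source of dual $\Box$-states in degrees $1,3$, and $1_X u_{a,j}$ in degree $2$, together with upper semicontinuity of $\dim\cK_a$, gives the statements for $\cK_a^k$ and $\cK_{a,*}^k$; smoothness of $u_{a,\pm}$ follows from the same projector construction.

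The decay statement then follows from the now-standard contour deformation: by the stability of the high energy estimates and of the absence of non-zero resonances in $\Im\sigma\geq 0$, the inverse Mellin transform representation of the solution in terms of $\cR_a(\sigma)^2=\wt{\Box_{g_a}}(\sigma)^{-1}$ (or $\cR_a(\sigma)$ for $d+\delta_{g_a}$) can be shifted to a contour $\Im\sigma=-c'$ for some $c'>0$, picking up only the residue at $\sigma=0$, which is the projection onto the relevant resonant space; see \cite[\S{4.4, 5}]{HintzVasyQuasilinearKdS}. The main technical obstacle I expect is the verification that the pseudodifferential inner product used in \cite{HintzPsdoInner} to control the subprincipal symbol at the trapped set can be chosen uniformly for $|a|<a_\eps$, so that the resulting spectral gap $c'$ is uniform in $a$; this is a smoothness-in-parameters statement about the construction of that operator, which I would handle by choosing the relevant symbol to depend smoothly on $a$ and noting that the subprincipal-symbol condition is an open condition on the symbol class.
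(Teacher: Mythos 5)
Your overall strategy coincides with the paper's: perturbation stability of the Fredholm and trapping analysis gives the upper bound (total rank $\leq 4$ near $\sigma=0$ for $d+\delta_{g_a}$), the explicit dual states obtained by applying $\td(0)+\wt{\delta_{g_a}}(0)$ to $1_X$ and $\star_{g_a}1_X$ and splitting by horizon give the matching lower bound, and the statements for $\Box_{g_a}$ and the decay follow by the same counting and contour-shifting arguments. There is, however, one genuine gap: the grading of $\cH_a$ by form degree. You write that $\pi_2\cH_a$ is $2$-dimensional and that ``combined with $\sum_k\dim\cH_a^k=4$, this forces $\cH_a^1=\cH_a^3=0$''; but $\sum_k\dim\cH_a^k=\dim\cH_a$ is precisely the grading statement to be proved, and knowing $\dim\pi_2\cH_a=2$ does not by itself give $\pi_2\cH_a\subset\cH_a$: the elements of $\cH_a$ projecting onto a basis of $\pi_2\cH_a$ could a priori carry nonzero components in other form degrees, in which case $\cH_a^2$ (the space of \emph{pure} degree $2$ elements of $\cH_a$) could even be trivial while $\pi_2\cH_a$ is still $2$-dimensional. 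Since $d+\delta_{g_a}$ does not preserve form degree, the grading of its kernel is not automatic; this is exactly the point the paper isolates as the remaining content of the proof.

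The missing step is short and uses only facts you already have. Since $d+\delta_{g_a}$ swaps even and odd form degrees, $\pi_\even=\pi_0+\pi_2+\pi_4$ maps $\cH_a$ into itself. Given $u=u_0+u_2+u_4\in\pi_\even\cH_a$, the degree $1$ part of $(d+\delta_{g_a})u=0$ reads $du_0+\delta_{g_a}u_2=0$; applying $\delta_{g_a}$ gives $\Box_{g_a}u_0=0$, so $u_0\in\cK_a^0=\la 1\ra$ is a constant (using the stability bound $\dim\cK_a^0\leq\dim\cK_0^0=1$ together with $\Box_{g_a}1=0$), and likewise $u_4\in\cK_a^4=\la\star_{g_a}1\ra$. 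As $d+\delta_{g_a}$ annihilates $u_0$ and $u_4$, it annihilates $u_2$, i.e.\ $\pi_2\cH_a\subset\cH_a$. Then $\la 1\ra\oplus\pi_2\cH_a\oplus\la\star_{g_a}1\ra\subseteq\cH_a$ with both sides $4$-dimensional, which yields the grading and $\cH_a^1=\cH_a^3=0$. A minor further point: your inclusion ``$\cK_{a,*}\supset(\td(0)+\wt{\delta_{g_a}}(0))^{-1}\cH_{a,*}$'' is not meaningful as written, since the operator is not invertible at $0$; what is actually used is simply $\cH_{a,*}^k\subseteq\cK_{a,*}^k$, which suffices for the lower bounds on $\dim\cK_a^1$ and $\dim\cK_a^3$.
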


\begin{rmk}
  Since for all $k=0,\ldots,4$, either $\cH_a^k=0$ or $\cH_{a,*}^k=0$, hence $\cH_a$ and $\cH_{a,*}$ are orthogonal, we obtain another proof, as in the Schwarzschild--de Sitter case, of the fact that $\Box_{g_a}$ acting on differential forms only has a simple resonance at $0$.
\end{rmk}

\begin{proof}[Proof of Theorem~\ref{ThmKDS}.]
  We only need to prove that the space $\cH_a$ is graded by form degree: let $\pi_\even=\pi_0+\pi_2+\pi_4$ denote the projection onto even form degree parts, then since $d+\delta_{g_a}$ maps even degree forms to odd degree forms and vice versa, $\pi_\even$ maps $\cH_a$ into itself. Now suppose $u\in\pi_\even\cH_a$, and write $u=u_0+u_2+u_4$ with $u_k=\pi_k u$, $k=0,2,4$. Then $0=\pi_1(d+\delta_{g_a})u=du_0+\delta_{g_a}u_2$,\footnote{We use the identification of resonant states with $t_*$-independent forms as in the proof of Theorem~\ref{ThmSummary}.} and applying $\delta_{g_a}$ to this equation gives $0=\Box_{g_a}u_0$, which implies $u_0\in\cK_a^0$, i.e.\ $u_0$ is a constant, as discussed before the statement of the theorem. Likewise, $u_4\in\cK_a^4$, so $u_4$ is the Hodge dual of a constant. Therefore, $d+\delta_{g_a}$ annihilates both $u_0$ and $u_4$, hence $u_2\in\cH_a$. This argument shows that in fact $\pi_2\cH_a\subset\cH_a$. Since $\pi_2\cH_a$ is $2$-dimensional, as noted above, we have
  \[
    \la 1\ra \oplus \pi_2\cH_a \oplus \la\star_{g_a}1\ra \subseteq \cH_a,
  \]
  with both sides having the same dimension (namely, $4$), and thus equality holds, providing the grading of $\cH_a$ by form degree.
\end{proof}

This in particular proves Theorem~\ref{ThmIntroKDS}.

\begin{rmk}
\label{RmkQuasilinear}
  Observe that \emph{all} ingredients in the Fredholm analysis of the normal operator family, which here in particular involves estimates at normally hyperbolic trapping, as well as \emph{all} of the above arguments which lead to a characterization of the spaces of resonances are stable in the sense that they apply to \emph{any} stationary perturbation of a given Schwarzschild--de Sitter spacetime ($4$-dimensional for the above, but similar arguments apply in all spacetime dimensions $\geq 4$), not only to slowly rotating Kerr--de Sitter black holes.
  
  In fact, using the analysis of operators with non-smooth coefficients developed in \cite{HintzQuasilinearDS} and extended in \cite{HintzVasyQuasilinearKdS}, we can deduce decay and expansions in the exact same form as in the above theorem for waves on spacetimes which are merely `asymptotically stationary' and close to Schwarzschild--de Sitter, i.e.\ for which the metric tensor differs from a stationary metric close to Schwarzschild--de Sitter by an exponentially decaying symmetric $2$-tensor (with suitable regularity). This shows at once that \emph{quasilinear} wave equations on differential forms of the form $\Box_{g(u,\nabla u)}u=q(u,\nabla u)$ with small initial data can be solved globally, provided $g(0,0)$ is close to the Schwarzschild--de Sitter metric, and the non-linearity $q$ annihilates $0$-resonant states; to give an (artificial) example, on $2$-forms, one could take $q(u,\nabla u)=|du|^2 u$.
\end{rmk}

\begin{rmk}
\label{RmkKDSExplicit}
  In the case of the Kerr--de Sitter metric, we can in fact explicitly write down $u_{a,1}\in\cH_a^2$ (and then take $u_{a,2}=\star_{g_a}u_{a,1}$ to obtain a basis of $\cH_a^2$). Indeed, on the Kerr spacetime, Andersson and Blue \cite{AnderssonBlueMaxwellKerr} give the values of the spin coefficients of the Maxwell field for the Coulomb solution in \cite[\S{3.1}]{AnderssonBlueMaxwellKerr}, and reconstructing the Maxwell field itself (in the basis given by wedge products of differentials of the Boyer--Lindquist coordinates $t,r,\theta,\phi$) is then an easy computation using the explicit form of the null tetrad given in \cite[Introduction, \S{2.4}]{AnderssonBlueMaxwellKerr}.\footnote{In the definition of $\phi_0$ in \cite[\S{2.4}]{AnderssonBlueMaxwellKerr}, the second summand $F[\hat{\mathbf{\Theta}},\hat{\mathbf{\Phi}}_{\mathrm{PNV}}]$ should be replaced by $F[\bar{\mathbf{m}},\mathbf{m}]$ to yield the correct result, see also \cite[Equation (2)$\dag$]{ChandrasekharMaxwellKerr}.} A tedious but straightforward calculation shows that the resulting $2$-form
  \begin{align*}
    u_{a,1} := F_{a,TR}(r,&\theta)\,(dt-a\sin^2\theta\,d\phi)\wedge dr \\
	  &+ F_{a,\Theta\Phi}(r,\theta)\sin\theta\,d\theta\wedge(a\,dt-(r^2+a^2)\,d\phi) \\
  \end{align*}
  with
  \[
	F_{a,TR}(r,\theta)=\frac{r^2-a^2\cos^2\theta}{(r^2+a^2\cos^2\theta)^2},\quad F_{a,\Theta\Phi}(r,\theta)=\frac{2ar\cos\theta}{(r^2+a^2\cos^2\theta)^2}
  \]
  is a solution of Maxwell's equations on Kerr--de Sitter space as well, i.e.\ when the cosmological constant is positive.
\end{rmk}


\end{document}